\newtheorem{dummy}{dummy}[section]
\newtheorem{lemma}[dummy]{Lemma}
\newtheorem{theorem}[dummy]{Theorem}
\newtheorem*{untheorem}{Main Theorem}
\newtheorem{conjecture}[dummy]{Conjecture}
\newtheorem{corollary}[dummy]{Corollary}
\newtheorem{proposition}[dummy]{Proposition}
\theoremstyle{definition}
\newtheorem{definition}[dummy]{Definition}
\newtheorem{example}[dummy]{Example}
\newtheorem{remark}[dummy]{Remark}
\newtheorem{remarks}[dummy]{Remarks}
\newtheorem{question}[dummy]{Question}
\newcommand{\bP}{\mathbb{P}}
\newcommand{\bR}{\mathbb{R}}
\newcommand{\bZ}{\mathbb{Z}}
\newcommand{\bfP}{\mathbf{P}}
\newcommand{\cD}{\mathcal{D}}
\newcommand{\cE}{\mathcal{E}}
\newcommand{\cF}{\mathcal{F}}
\newcommand{\cG}{\mathcal{G}}
\newcommand{\cH}{\mathcal{H}}
\newcommand{\cI}{\mathcal{I}}
\newcommand{\cL}{\mathcal{L}}
\newcommand{\cP}{\mathcal{P}}
\newcommand{\cQ}{\mathcal{Q}}
\newcommand{\cO}{\mathcal{O}}
\newcommand{\cS}{\mathcal{S}}
\newcommand{\cT}{\mathcal{T}}
\newcommand{\cZ}{\mathcal{Z}}
\newcommand{\sfR}{\mathsf{R}}
\newcommand{\Hom}{\mathrm{Hom}}
\newcommand{\dghom}{\mathit{hom}}
\newcommand{\uhom}{\underline{\mathit{hom}}}
\newcommand{\Si}{\Sigma}
\newcommand{\bGamma}{\mathbf{\Gamma}}
\newcommand{\LS}{ {\Lambda_\Sigma} }
\newcommand{\uchi}{\underline{\chi}}
\newcommand{\Spec}{\mathrm{Spec}\,}
\newcommand{\Ext}{\mathrm{Ext}}
\newcommand{\Tor}{\mathrm{Tor}}
\newcommand{\Sh}{\mathit{Sh}}
\newcommand{\naive}{\mathit{naive}}
\newcommand{\Tr}{\mathit{Tr}}
\newcommand{\Perf}{\cP\mathrm{erf}}
\newcommand{\Shard}{\mathrm{Shard}}
\newcommand{\finfib}{\mathit{fin}}
\newcommand{\FT}{\cF\cT}
\renewcommand{\SS}{\mathit{SS}}
\newcommand{\ltr}{\langle \Theta \rangle}
\newcommand{\ltrp}{\langle\Theta'\rangle}
\newcommand{\pol}{\mathit{pol}}
\newcommand{\ori}{\mathfrak{or}}
\newcommand{\Gm}{\mathbb{G}_{\mathrm{m}}}
\newcommand{\tLS}{\widetilde{\Lambda}_\Sigma}
\begin{document}
\title
{A categorification of Morelli's theorem}

\author{Bohan Fang}
\address{Bohan Fang, Department of Mathematics, Columbia University,
2990 Broadway, New York, NY 10027}
\email{b-fang@math.columbia.edu}

\author{Chiu-Chu Melissa Liu}
\address{Chiu-Chu Melissa Liu, Department of Mathematics, Columbia University,
2990 Broadway, New York, NY 10027}
\email{ccliu@math.columbia.edu}

\author{David Treumann}
\address{David Treumann, Department of Mathematics, Northwestern University,
2033 Sheridan Road, Evanston, IL  60208}
\email{treumann@math.northwestern.edu}

\author{Eric Zaslow}
\address{Eric Zaslow, Department of Mathematics, Northwestern University,
2033 Sheridan Road, Evanston, IL  60208}
\email{zaslow@math.northwestern.edu}

\maketitle

\begin{abstract}
We prove a theorem relating torus-equivariant coherent sheaves on toric varieties
to polyhedrally-constructible sheaves on a vector space. 
At the level of K-theory, the theorem recovers
Morelli's description of the K-theory of a smooth projective toric variety \cite{M}.
Specifically, let $X$ be a proper toric variety of dimension $n$ and
let $M_\bR = \mathrm{Lie}(T_\bR^\vee)\cong \bR^n$ be the Lie algebra 
of the compact dual (real) torus $T_\bR^\vee\cong U(1)^n$.
Then there is a corresponding conical Lagrangian $\Lambda \subset T^*M_\bR$
and an equivalence of triangulated dg categories
$\Perf_T(X) \cong \Sh_{cc}(M_\bR;\Lambda),$
where
$\Perf_T(X)$ is the triangulated dg category of perfect complexes of 
torus-equivariant coherent sheaves on $X$  and
$\Sh_{cc}(M_\bR;\Lambda)$ is the triangulated dg category of complex of sheaves on $M_\bR$ with 
compactly supported, constructible cohomology whose singular support lies in $\Lambda$.
This equivalence is monoidal---it intertwines the tensor product of coherent
sheaves on $X$ with the convolution product of constructible sheaves on $M_\bR$.
\end{abstract}

\section{Introduction}

In this paper we construct a dictionary between equivariant coherent sheaves on toric varieties and
constructible sheaves on real periodic hyperplane arrangements.

Recall that to a projective $n$-dimensional toric variety $X$ we may associate a polytope $\Delta \subset \bR^n$ whose vertices are lattice points.
Let $\cH_\Delta$ denote the union of all affine hyperplanes in $\bR^n$ that are spanned by lattice points and that are parallel to the faces of $\Delta$.  The following is a special case of our main theorem:

\begin{theorem}
\label{thm:1.1}
Let $X$ be an $n$-dimensional smooth projective toric variety and let $\Delta$ be its moment polytope.  Then there is a full embedding of derived categories
$$\kappa: D^b_T(X) \hookrightarrow D^b_{cc}(\bR^n;\cH_\Delta)$$
where
\begin{itemize}
\item $D^b_T(X)$ denotes the bounded derived category of torus-equivariant coherent sheaves on $X$ and
\item $D^b_{cc}(\bR^n;\cH_\Delta)$ denotes the bounded derived category of sheaves on $\bR^n$ which are compactly supported and constructible (``cc'') with respect to the stratification defined by $\cH_\Delta$.
\end{itemize}
The embedding is monoidal---it intertwines the tensor product on $D^b_T(X)$ with a convolution product on $D^b_{cc}(\bR^n;\cH_\Delta)$.
\end{theorem}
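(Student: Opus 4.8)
The plan is to construct $\kappa$ on a generating set of equivariant line bundles, check full faithfulness there by identifying morphism complexes on the coherent and constructible sides, extend to all of $D^b_T(X)$ by line-bundle resolutions, and read off monoidality from the behaviour of convolution on polytope-type sheaves. For the generators: $X$ smooth and projective means its fan $\Sigma$ is complete and simplicial, and it is known that $D^b_T(X)$ is generated by the equivariant line bundles $\cO_X(D)$, $D$ a $T$-invariant (hence Cartier) divisor --- indeed, via a line-bundle resolution of the diagonal, every equivariant coherent sheaf admits a finite resolution by finite sums of such. To each $D$ one attaches a constructible sheaf $\Theta_D$ on $M_\bR$ built combinatorially from the $\Sigma$-linear support function of $D$: glue, over the cones $\sigma \in \Sigma$, the $!$-extensions of the constant sheaves on the translated dual cones $m_\sigma + \sigma^\vee$, where $m_\sigma$ is the vertex of the (virtual) moment polytope indexed by $\sigma$; when $\cO_X(D)$ is nef this is just the ``standard'' sheaf $j_!\,\bC_{\mathrm{int}\,P_D}[\dim P_D]$ of the moment polytope $P_D$. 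Smoothness of $X$ forces the $m_\sigma$ to be lattice points and the walls to be parallel to facets of $\Delta$, so $\Theta_D$ is $\cH_\Delta$-constructible; properness of $X$ forces the relevant region to be bounded, so $\Theta_D$ is compactly supported. We set $\kappa(\cO_X(D)) = \Theta_D$.

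For divisors $D, D'$ one then compares $\mathrm{RHom}_{D^b_T(X)}(\cO_X(D),\cO_X(D')) = R\Gamma_T(X;\cO_X(D'-D))$ with $\mathrm{RHom}_{D^b_{cc}(\bR^n;\cH_\Delta)}(\Theta_D,\Theta_{D'})$; both are finite-dimensional, the first because $X$ is proper, the second because $\Theta_D,\Theta_{D'}$ are compactly supported and constructible. By the \v{C}ech/local-cohomology description of toric sheaf cohomology the first is computed by an explicit complex indexed by the cones of $\Sigma$; by a microlocal (Morse-theoretic) computation the second is computed by a complex recording how the defining cones of $\Theta_D$ and $\Theta_{D'}$ intersect, which again reorganizes over the cones of $\Sigma$. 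One checks these complexes agree and --- crucially --- that the identification is natural in $D$ and $D'$, hence compatible with composition (cup product / multiplication of sections on one side, the evident maps of combinatorial complexes on the other). This makes $\kappa$ fully faithful on the full subcategory of equivariant line bundles; since those generate, $\kappa$ extends to $D^b_T(X)$ by resolving objects into line bundles and taking iterated cones, and an induction on the length of a resolution (with the five lemma) shows the extension is well defined up to quasi-isomorphism and still fully faithful. Finally $\cO_X(D)\otimes\cO_X(D') = \cO_X(D+D')$ together with $\Theta_D \star \Theta_{D'} \cong \Theta_{D+D'}$ --- in the nef case the elementary identity $j_!\bC_{\mathrm{int}\,A}[\dim A]\star j_!\bC_{\mathrm{int}\,B}[\dim B]\cong j_!\bC_{\mathrm{int}(A+B)}[\dim(A+B)]$ for the fibre integral along addition $M_\bR\times M_\bR\to M_\bR$, in general the analogous cone computation --- upgrades, via bifunctoriality of $\otimes$ and $\star$ and generation, to a monoidal structure on $\kappa$.

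The main obstacle is exactly the naturality in the previous step: one must identify the \v{C}ech-type complex computing equivariant cohomology of a line bundle on $X$ with the microlocal complex computing morphisms of the associated constructible sheaves on the \emph{periodic} arrangement $\cH_\Delta$, functorially in the divisors, so that composition is preserved --- this is the categorical lift of the combinatorial polytope identities behind Morelli's description of $K(X)$. Running both computations through a common index category built from $\Sigma$ is what makes the comparison go through, but checking compatibility with composition, and handling the $!$-versus-$*$ gluing conventions that encode the equivariant structure of $\cO_X(D)$, is the delicate part. A softer secondary issue is identifying the essential image: deciding which $\cH_\Delta$-constructible compactly supported sheaves satisfy the additional singular-support condition carried by the conical Lagrangian $\Lambda$ of the abstract's main theorem (this is why forgetting $\Lambda$ leaves only a full embedding, not an equivalence), and confirming that compact support matches properness. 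Alternatively the statement follows from the abstract's main theorem: the dg-equivalence $\Perf_T(X) \cong \Sh_{cc}(M_\bR;\Lambda)$ passes to homotopy categories as $D^b_T(X) \cong D^b_{cc}(M_\bR;\Lambda)$, and post-composing with the fully faithful inclusion $D^b_{cc}(M_\bR;\Lambda)\hookrightarrow D^b_{cc}(\bR^n;\cH_\Delta)$ --- valid since the base projection of $\Lambda$ lands in $\cH_\Delta$ --- yields $\kappa$ with monoidality inherited, so that Theorem~\ref{thm:1.1} becomes a corollary and the work is concentrated in the main theorem.
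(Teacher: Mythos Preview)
Your plan and the paper's diverge at the choice of generators, and this is not cosmetic. The paper explicitly avoids line bundles: in Section~1.2 the authors remark that it is ``difficult to find a convenient collection'' of generators in $\Perf_T(X)$, citing the open status of the King/Costa/Mir\'o-Roig conjectures on full strong exceptional collections of line bundles, and that they ``get around this difficulty by passing to larger categories.''  Concretely, they enlarge to $\cQ_T(X)$ and use the quasicoherent (non-coherent!) sheaves $\Theta'(\sigma,\chi)=j_{\sigma *}\cO_\sigma(\chi)$, whose constructible counterparts are the costandard sheaves $\Theta(\sigma,\chi)=j_!\omega_{(\chi+\sigma^\vee)^\circ}$.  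The point is Proposition~3.3: $\Ext^i$ between two $\Theta'$'s (and between two $\Theta$'s) is either $\sfR$ in degree $0$ or zero, governed by a poset $\bGamma(\Sigma,M)$.  Both collections therefore span a copy of the $\sfR$-linearized poset category, which is trivially formal, and the equivalence $\langle\Theta'\rangle\simeq\langle\Theta\rangle$ is immediate (Theorem~3.4).  A \v Cech argument then shows $\Perf_T(X)\subset\langle\Theta'\rangle$ (Corollary~3.5), and monoidality is obtained not by checking $\Theta_D\star\Theta_{D'}\cong\Theta_{D+D'}$ on line bundles but by a functoriality result for fan-preserving maps (Theorem~3.7) applied to the diagonal $N\to N\oplus N$ (Corollary~3.10).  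Compact support of $\kappa(\cE)$ for $\cE$ perfect is then deduced from the ample case (Theorem~3.6) together with duality (Theorem~7.2).

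The ``delicate part'' you flag---matching the \v Cech-type complexes for $\mathrm{RHom}(\cO_X(D),\cO_X(D'))$ with microlocal complexes for $\mathrm{RHom}(\Theta_D,\Theta_{D'})$, \emph{compatibly with composition}---is exactly the obstacle the paper's choice of generators is designed to dissolve.  For line bundles these Ext groups are the equivariant cohomology groups $H^i_T(X,\cO_X(D'-D))$, which are nontrivial in higher degrees and whose Yoneda composition encodes genuine $A_\infty$ information; matching that structure on both sides is not a bookkeeping exercise, and you have not indicated how to do it.  By contrast, for the $\Theta'$'s the Ext algebra is a poset algebra concentrated in degree zero, so there is no higher structure to match and composition compatibility is a one-line check (canonical generator maps to canonical generator).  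Your closing alternative---deduce Theorem~1.1 from the Main Theorem by composing with the inclusion $\Sh_{cc}(M_\bR;\Lambda)\hookrightarrow D^b_{cc}(\bR^n;\cH_\Delta)$---is correct and is in effect what the paper does, but of course it relocates rather than removes the work.
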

This kind of ``coherent-constructible correspondence'' was first observed by Bondal \cite{Bondal}.
By taking Euler characteristics of stalks, the Grothendieck group of the constructible category is naturally identified with the ring of compactly supported functions $\mathrm{Fun}(\cH_\Delta)$ on $\bR^n$ that are constant along each cell of $\cH_\Delta$.  The multiplication in this ring is ``convolution with respect to Euler characteristic measure.''  Theorem \ref{thm:1.1} therefore identifies the equivariant $K$-theory of $X$ with a subring of this ring of functions.  This is the beautiful correspondence of Morelli \cite{M}, which carries the class of an ample line bundle on $X$ to the indicator function of its associated moment polytope.  We regard the results of this paper as ``categorifications'' of those of Morelli.

Morelli showed that the image of the inclusion $K_T(X) \hookrightarrow \mathrm{Fun}(\cH_\Delta)$ can be described by conditions that are local on $\bR^n$.  Our categorification of this result makes use of the microlocal perspective of sheaves developed by Kashiwara and Schapira \cite{KS}.  The microlocal theory of sheaves associates to each constructible sheaf $F$ on $\bR^n$ a conical Lagrangian subvariety $\SS(F) \subset T^* \bR^n$, called its ``singular support.''  A sheaf $F$ is constructible with respect to a stratification $\cS$ if and only if $\SS(F)$ belongs to the ``conormal variety'' $\Lambda(\cS)$ of $\cS$.

We will associate to any toric variety a conical Lagrangian $\Lambda \subset \Lambda(\cH_\Delta)$ which determines image of the embedding $\kappa$ completely.  The ``smooth and projective'' hypotheses can be removed:

\begin{untheorem}
If $X$ is a proper toric variety, there is a corresponding conical Lagrangian $\Lambda \subset T^* \bR^n$ and an equivalence of derived categories (or rather, triangulated dg categories)
$$\Perf_T(X) \cong \Sh_{cc}(\bR^n;\Lambda)$$
where
\begin{itemize}
\item $\Perf_T(X)$ is the triangulated dg category of \emph{perfect} complexes of torus-equivariant coherent sheaves.  I.e., the category of those complexes that locally admit a bounded resolution by torus-equivariant vector bundles.  

\item  $\Sh_{cc}(\bR^n;\Lambda)$ is the triangulated dg category of sheaves on $\bR^n$ which are compactly supported, constructible, and whose singular support lies in $\Lambda$.
\end{itemize}
Moreover, this equivalence is monoidal---it intertwines the tensor product on $X$ with the convolution product on $\bR^n$.
\end{untheorem}

\begin{remark}
When $X$ is smooth, $\Perf_T(X)$ coincides with (or rather, its homotopy category coincides with) the usual bounded derived category of coherent sheaves.
\end{remark}

\subsection{HMS motivation}

The correspondence described in this paper has some relevance to the homological mirror symmetry (HMS) conjecture of Kontsevich.  Kontsevich's conjecture is a mathematical description of the duality in physics
between $\mathcal N = (2,2)$ supersymmetric
quantum field theories with boundary.  Kontsevich formulated the mirror duality as an equivalence of two categories, with objects representing boundary conditions.  On one side of the mirror equivalence is the category of coherent sheaves, capturing the complex geometry.  On the other side is a symplectic, Fukaya-type category measuring the quantum intersection theory of Lagrangian submanifolds.  In the setting where the complex manifold is a projective toric variety, the correspondence in this paper can be used to prove a slightly nonstandard version of HMS.  We briefly explain here.  For more details, see \cite{FLTZ}.

In \cite{NZ,N} a ``microlocalization'' theorem is proved, showing the dg category of constructible sheaves on a real analytic manifold is quasi-equivalent to the unwrapped Fukaya category of Lagrangian submanifolds of the cotangent bundle.  If one fixes a subcategory of constructible sheaves by specifying that their microsupports lie in a conical Lagrangian $\Lambda$ in the cotangent bundle, then the corresponding Fukaya subcategory is similarly fixed by requiring Lagrangians to be asymptotic to $\Lambda$.  By microlocalization, then, the theorem in this paper proves that the category of equivariant coherent sheaves on a toric variety is quasi-equivalent to a Fukaya category in the cotangent bundle of a vector space.  This is a (equivariant) version of HMS for toric varieties, but not the usual one involving Lagrangian thimbles in a Lefschetz pencil defined by the superpotential of the Hori-Vafa mirror construction.

The mirror functor is conjectured to involve dualizing Lagrangian torus fibrations in symplectic manifolds \cite{SYZ}.  This ``T-duality'' procedure can be used to map holomorphic line bundles to Lagrangians.  The coherent-constructible correspondence defined in this paper is compatible with T-duality, in the sense that the constructible dual of an ample holomorphic line bundle gives, by microlocalization, a Lagrangian submanifold {\em equivalent} to the T-dual Lagrangian.

\subsection{The proof of the main theorem}

In this section we give an outline of the proofs of our main results.  Let us first indicate how the functor $\kappa:\Perf_T(X) \to \Sh_{cc}(\bR^n)$ is constructed.  A typical strategy for defining a functor between triangulated categories is to find some objects of the source category that generate all objects in some sense, to define $\kappa$ on these objects and maps between them, and then to conclude by a formal argument that $\kappa$ is defined on the whole category.  It is difficult to find a convenient collection of such objects in $\Perf_T(X)$.  For instance it is still not understood when $\Perf(X)$ has a ``full strong exceptional collection'' consisting of line bundles---this is the subject of conjectures of King, Costa, and Miro-Roig \cite{CM}.  We get around this difficulty by passing to larger categories.  We allow on the coherent side some sheaves that are not coherent at all, but only quasicoherent.  Their constructible counterparts do not have compact support.

We find in this larger category an (infinite) strongly exceptional collection $\{\Theta'(\sigma,\chi)\}$ that generates a subcategory of $\cQ_T(X)$ (a dg version of the derived category of quasicoherent equivariant sheaves) containing $\Perf_T(X)$.  The existence of a strongly exceptional collection $\{\Theta(\sigma,\chi)\}$ in $\Sh_c(\bR^n)$ with the same Ext pattern implies the existence of an equivalence between $\ltrp$ and $\ltr$---the subcategories generated by these exceptional collections.  In particular we get a full embedding of $\Perf_T(X)$ into $\Sh_c(\bR^n)$.

It is easy to describe the objects $\Theta'(\sigma,\chi)$ and $\Theta(\sigma,\chi)$.  The former are quasicoherent sheaves of the form $j_* \cO$ endowed with an equivariant structure, where $j$ is the inclusion of an affine open set.  The latter are, up to a shift, extended by zero from a constant sheaf on a translate of a conical, polyhedral open set.  

The construction of the functor $\kappa$ is very easy---all of this is done in Section \ref{sec:three}.  The characterization of the image requires a much more complicated argument.  It breaks up into two parts:
\begin{enumerate}
\item Identify precisely which constructible sheaves can be generated by $\Theta$.  This shows in particular that constructible sheaves with compact support come from quasicoherent sheaves.  
\item  Use a monoidal criterion for perfectness to show that the quasicoherent counterparts of compactly supported sheaves are actually perfect.
\end{enumerate}

The proof of (1) is a two-step induction.  One of the inductions is on the ``height'' of the sheaf, the dimension of the singular support in the cotangent directions.  The other induction is on the number of critical points the sheaf has with respect to linear functions on $M_\bR$---this is essentially the number of height-$h$ pieces of the singular support.  We review some of the microlocal sheaf theory needed to understand these arguments in Section \ref{sec:microlocal}.

The proof of (2) requires us to construct a dictionary between standard constructions on the coherent and constructible sides.  This dictionary is interesting in its own right.  We prove that $\kappa$ intertwines the usual tensor structure on $\Perf_T(X)$ with the convolution structure on $\Sh_{cc}(M_\bR)$, and (as a consequence) that it interwines vector bundle duality with the antipode of Verdier duality, and that pulling back an object of $\Perf_T(X)$ to a toric resolution of singularities of $X$ has no effect on the associated constructible sheaf.  All of these are essentially consequences of a ``functoriality for pullbacks'' result, Theorem \ref{thm:functoriality}.

\emph{Acknowledgments:}  We thank D. Arinkin, B. Bhatt, M. Brion, D. Nadler, P. Schapira, and Z. Yun.  We are especially grateful to Alexei Bondal for his inspiring preprint \cite{Bondal}.  The work of EZ is supported in part by NSF/DMS-0707064.  BF and EZ thank the Pacific Institute for the Mathematical Sciences, where some of this work was performed.

\section{Notation and conventions}
\label{sec:notation}

We work throughout over a commutative noetherian base ring $\sfR$.

\begin{remark}
It is because $\sfR$ plays almost no role in our arguments that we are able to work in this generality.  The reader may instead prefer to assume $\sfR = \bZ$ and deduce the results for other rings of interest by a change-of-coefficients argument.  We find it simpler to leave $\sfR$ general.
\end{remark}

\subsection{Categories}

By a ``dg category'' we will always mean $\sfR$-linear dg category.  If $C$ is a dg category, then $\dghom(x,y)$ denotes the chain complex of homomorphisms between objects $x$ and $y$ of $C$.  We will use $\Hom(x,y)$ to denote hom sets in non-dg settings.  We will regard the differentials in all chain complexes as having degree $+1$, i.e. $d:K^i \to K^{i+1}$.  If $K$ is a chain complex (of $\sfR$-modules or sheaves, usually) then $h^i(K)$ will denote its $i$th cohomology object.    We frequently abuse definitions and write  ``$K = 0$'' instead of ``$K$ is acyclic'' or ``$K$ is  quasi-isomorphic to $0$.''  We let $hC$ denote the $\sfR$-linear homotopy category of $C$, with the same objects and with $\Hom(x,y) = h^0(\dghom(x,y))$.

If $C$ is a dg category, then $\Tr(C)$ denotes the triangulated dg category generated by $C$, which is well-defined up to quasi-equivalence.  For example, $\Tr(C)$ may be taken to be the smallest full triangulated subcategory of $mod(C)$ containing the image of the Yoneda embedding $Y:C \to mod(C)$.  (See \cite{Dr,To} for the Yoneda embedding in the dg setting.)  Note that if $C$ is a full subcategory of a triangulated dg category $D$, then $\Tr(C)$ is naturally equivalent to the full triangulated subcategory $\langle C \rangle$ of $D$ generated by $C$---the composition of Yoneda maps $\langle C \rangle \hookrightarrow D \to mod(D) \to mod(C)$ provides such an equivalence.

\subsection{Schemes and coherent sheaves}

All schemes that appear will be over $\sfR$.  If $X$ is a scheme, then we let $\cQ(X)^\naive$ denote the dg category of bounded complexes of quasicoherent sheaves on $X$, and we let $\cQ(X)$ denote the dg localization (in the sense of \cite{Dr,To}) of this category with respect to acyclic complexes.  If $G$ is an algebraic group acting on $X$, we let $\cQ_G(X)^\naive$ denote the dg category of complexes of $G$-equivariant quasicoherent sheaves.  We let $\cQ_G(X)$ denote the localization of this category with respect to acyclic complexes.

We will only consider the case where $G$ is diagonalizable and $X$ admits a Zariski open cover by $G$-stable affine charts.  In that case, an object of $\cQ(X)$ (resp. $\cQ_G(X)$) is called \emph{strictly perfect} if it is quasi-isomorphic to a bounded complex of vector bundles (resp. equivariant vector bundles), and \emph{perfect} if this is true on each affine chart.  In Section \ref{sec:perfect} we will also make use of a monoidal criterion for perfectness, see Proposition \ref{prop:dualizable}.  We use $\Perf(X) \subset \cQ(X)$ and $\Perf_G(X)\subset \cQ_G(X)$ to denote the full triangulated dg subcategories consisting of perfect objects.  If $u:X \to Y$ is a morphism of schemes, we have natural dg functors $u_*:\cQ(X) \to \cQ(Y)$ and $u^*:\cQ(Y) \to \cQ(X)$.  Note that the functor $u^*$ carries $\Perf(Y)$ to $\Perf(X)$.

All algebraic groups that appear will be tori.  We write $\Gm$ for the multiplicative group over $\sfR$.
Suppose $G$ and $H$ are algebraic groups, $X$ is a scheme with a $G$-action, and $Y$ is a scheme with an $H$-action.  If a morphism $u:X \to Y$ is equivariant with respect to a homomorphism of algebraic groups $\phi:G \to H$, then we will often abuse notation and write $u_*$ and $u^*$ for the equivariant pushforward and pullback functors $u_*:\cQ_G(X) \to \cQ_H(Y)$ and $u^*:\cQ_H(Y) \to \cQ_G(X)$.  Again note that $u^*$ carries $\Perf_H(Y)$ to $\Perf_G(X)$.

\subsection{Constructible and microlocal geometry}

We refer to \cite{KS} for the theory of constructible sheaves, especially the microlocal aspects of this theory.  We refer to \cite{NZ,N,N2} and the references therein for details on how to extend this classical theory to the setting of dg categories.
If $X$ is a topological space we let $\Sh(X)$ denote the dg category of bounded chain complexes of sheaves of $\sfR$-modules on $X$, localized with respect to acyclic complexes.  If $X$ is a real-analytic manifold, then we will call an object $F \in \Sh(X)$ \emph{constructible} if it is constructible with respect to a real-analytic Whitney stratification of $X$, and if furthermore the stalk at every point is a perfect chain complex of $\sfR$-modules.  We let $\Sh_c(X)$ denote the full subcategory of constructible objects of $\Sh(X)$ and $\Sh_{cc}(X) \subset \Sh_c(X)$ is the full subcategory of constructible objects which have compact support.  If $f:X \to Y$ is a real-analytic map, we let $f_*$ and $f_!$ denote the usual push-forward and push-forward with compact support functors $\Sh(X) \to \Sh(Y)$, and $f^*$ and $f^!$ the usual pull-back and Verdier-pullback functors $\Sh(Y) \to \Sh(X)$.  Note that $f^*$ and $f^!$ always carry $\Sh_c(Y)$ to $\Sh_c(X)$, and $f_!$ and $f_*$ carry $\Sh_c(X)$ to $\Sh_c(Y)$ so long as the fibers of $f$ have finitely many connected components.  We write $\cD$ for the Verdier duality functor $\Sh_c(X)^\circ \stackrel{\sim}{\to} \Sh_c(X)$.


We use the ``standard'' and ``costandard'' terminology from \cite{NZ} for certain sheaves extended from analytic submanifolds.  The \emph{standard constructible sheaf} on the submanifold $i_{Y}: Y\hookrightarrow X$ is defined as the push-forward of the constant sheaf on $Y$, i.e. $i_{Y*} \sfR_Y$ as an object in $\Sh_c(X)$. The Verdier duality functor $\cD: \Sh_c(X)^\circ \to \Sh_c(X)$ takes $i_{Y*}\sfR_Y$ to the \emph{costandard constructible sheaf} on $X$. We know $\cD(i_{Y*}\sfR_Y)=i_{Y!}\cD(\sfR_Y)=i_{Y!} \omega_Y$, where $\omega_Y=\cD(\sfR_Y)=\ori_Y[\dim Y]$ is the orientation sheaf on $Y$ placed in cohomological degree $-\dim Y$.


\subsection{Toric geometry}\label{sec:toric-geometry}

We refer to \cite{Fu} for the theory of toric varieties.  Let $N \cong \bZ^n$ be a free abelian group, and let $\Sigma$ be a \emph{fan} in $N$ (or in $N_\bR = N \otimes \bR$) of strongly convex rational polyhedral cones.  We do not necessarily assume that $\Sigma$ satisfies further conditions---e.g. that it is complete, or simplicial.

Given $N$ and $\Sigma$, we fix the following standard notation:
\begin{itemize}
\item  $M = \Hom(M,\bZ)$ is the dual lattice to $N$.
\item  $N_\bR$ and $M_\bR$ are the real vector spaces spanned by $N$ and $M$, i.e. $N_\bR = N \otimes_\bZ \bR$ and
$M_\bR = M \otimes_\bZ \bR$.
\item  $X = X_\Sigma$ is the toric variety associated to $\Sigma$, defined over the base ring $\sfR$.  It is naturally equipped with an action of the algebraic torus $T = N \otimes \Gm$.
\end{itemize}

We also use:
\begin{itemize}
\item Let $\sigma\in \Sigma$ be a $d$-dimensional cone. Let
\begin{eqnarray*}
\sigma^\vee &=&\{ x\in M_\bR\mid \langle x,v\rangle \geq 0 \textup{ for all } v\in \sigma\},\\
\sigma^\perp &=& \{x\in M_\bR\mid \langle x,v\rangle =0 \textup{ for all } v\in \sigma\},
\end{eqnarray*}
where $\langle \ , \ \rangle: M_\bR\times N_\bR\to \bR$ is the natural pairing.
Then $\sigma^\vee \subset M_\bR$ is the dual cone of $\sigma$, and $\sigma^\perp$ is a codimension
$d$ subspace of $M_\bR$. Let $M(\sigma)= \sigma^\perp\cap M$. Then $M(\sigma)$ is a rank $n-d$ sublattice of $M$.
\item We denote the interior of a cone by $\sigma^\circ$.  More precisely, for $\sigma \in \Sigma$, we have
$$\begin{array}{ccc}
\sigma^\circ & = & \{v \in \sigma \mid v \notin \tau \text{ for any }\tau \in \Sigma, \, \tau \subsetneq \sigma\}, \\
(\sigma^\vee)^\circ & = & \{x \in M_\bR \mid \langle x,v\rangle >0 \text{ for all }x \in \sigma^\circ\}.
\end{array}
$$
\end{itemize}

Now we mention some basic facts about line bundles.  For simplicity let us assume $\Sigma$ is complete.   Fix a total ordering $C_1,\ldots,C_v$ of the maximal cones in $\Sigma$.  Since $X_\Sigma$ is complete, each $C_i$ corresponds to a $T$-fixed point $x_i \in X$.

\begin{definition}[twisted polytope]\label{df:twisted-polytope}
Let $\Sigma$ be a complete fan.  A \emph{twisted polytope} for $\Sigma$ is an ordered $v$-tuple $\uchi = (\chi_1,\ldots,\chi_v)$ of elements of $M$ with the property that for any $1 \leq i \leq v$ and $1 \leq j \leq v$, the linear forms
$\langle \chi_i,-\rangle$ and $\langle \chi_j,-\rangle$ agree when restricted to $C_i \cap C_j$.
\end{definition}

The terminology is motivated by \cite{KT}.

\begin{theorem}
\label{thm:fulton3}
For each twisted polytope $\uchi = (\chi_1,\ldots,\chi_v)$, there is up to isomorphism a unique line bundle $\cO_X(\uchi)$ with the properties that the fiber of $T$ over the fixed $\sfR$-point $x_i$ is a free $\sfR$ module of rank one on which $T$ acts with weight $\chi_i$.  $\cO_X(\uchi)$ is ample precisely when $\uchi$ satisfy the following two conditions:
\begin{enumerate}
\item The set $\{\chi_1,\ldots,\chi_v\}$ is strictly convex, in the sense that its convex hull is strictly larger than the convex hull of any subset $\{\chi_{i_1},\ldots,\chi_{i_w}\}$.
\item The convex hull of $\{\chi_1,\ldots,\chi_v\}$ coincides with the set of all $\xi \in M_\bR$ satisfying
$$\langle\xi,\gamma\rangle \geq \langle\chi_i,\gamma\rangle \text{ for all $i$ and all $\gamma \in C_i$.}$$
\end{enumerate}
\end{theorem}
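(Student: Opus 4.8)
The plan is to recall the standard dictionary between $T$-equivariant line bundles on $X = X_\Sigma$ and piecewise-linear support-function data (following \cite{Fu}, \S3.3--3.4), and then to translate the classical convexity criterion for ampleness into conditions (1) and (2).

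For existence I would build $\cO_X(\uchi)$ by descent. For each maximal cone $C_i$ the affine chart $U_{C_i} = \Spec\sfR[C_i^\vee\cap M]$ contains the fixed point $x_i$; set $\cO_X(\uchi)|_{U_{C_i}} := \cO_{U_{C_i}}\cdot e_i$, free of rank one, with trivializing section $e_i$ declared to have $T$-weight $\chi_i$. On an overlap $U_{C_i}\cap U_{C_j} = U_{C_i\cap C_j}$ the twisted-polytope condition says $\chi_i - \chi_j$ vanishes on $C_i\cap C_j$, so the monomial $x^{\chi_i-\chi_j}$ is a unit there; gluing by $e_i = x^{\chi_i-\chi_j}e_j$ is equivariant of the correct weight, and the cocycle identity $x^{\chi_i-\chi_j}x^{\chi_j-\chi_k} = x^{\chi_i-\chi_k}$ is immediate. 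The fiber at $x_i$ is then $\sfR$ with $T$ acting by $\chi_i$. For uniqueness I would use that $\Pic(U_{C_i}) = 0$ and that, because $C_i$ is full-dimensional (so $C_i^\perp = 0$), an equivariant line bundle on $U_{C_i}$ is equivariantly isomorphic to $\cO_{U_{C_i}}(\chi)$ for a unique $\chi$---its fiber weight at $x_i$---the only weight-zero units being scalars; hence any equivariant line bundle with the prescribed fiber weights yields the $T$-Cartier datum $\{(C_i,\chi_i)\}$ and is therefore isomorphic to $\cO_X(\uchi)$.

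For the ampleness statement I would work with the support function $\psi = \psi_{\uchi}\colon N_\bR\to\bR$ determined by $\psi|_{C_i} = \langle\chi_i,-\rangle$ (well defined by the twisted-polytope condition) and the polytope
$$P_{\uchi} = \{\xi\in M_\bR : \langle\xi,\gamma\rangle \geq \psi(\gamma)\ \text{for all }\gamma\in N_\bR\} = \{\xi\in M_\bR : \langle\xi,\gamma\rangle\geq\langle\chi_i,\gamma\rangle\ \text{for all }i\text{ and all }\gamma\in C_i\},$$
which is exactly the set appearing in condition (2), and is bounded because $\Sigma$ is complete. The classical criterion (\cite{Fu}, \S3.4) says that $\cO_X(\uchi)$ is ample if and only if $\psi$ is a strictly convex support function for $\Sigma$. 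I would then run the chain of equivalences: $\psi$ is convex $\iff \psi(v) = \min_i\langle\chi_i,v\rangle$ for all $v$ $\iff$ every $\chi_i$ lies in $P_{\uchi}$ $\iff \mathrm{Conv}\{\chi_1,\ldots,\chi_v\}\subseteq P_{\uchi}$; and, granting $\psi$ convex, every vertex of $P_{\uchi}$ is one of the $\chi_i$ (the vertex cut out by a maximal cone of the normal fan of $P_{\uchi}$ equals the common value $\chi_i$ of $\psi$ on any $C_i$ it contains, $C_i$ being full-dimensional), so $P_{\uchi} = \mathrm{Conv}\{\chi_1,\ldots,\chi_v\}$ exactly when no $\chi_i$ fails to be a vertex, i.e.\ when the $\chi_i$ are pairwise distinct and each is a vertex of their convex hull---which is condition (1)---and this in turn forces the normal fan of $P_{\uchi}$ to be $\Sigma$, i.e.\ $\psi$ strictly convex. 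Assembling these, conditions (1) and (2) together are equivalent to strict convexity of $\psi$, hence to ampleness of $\cO_X(\uchi)$.

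The argument is classical rather than hard---everything rests on \cite{Fu}---so the main obstacle is bookkeeping: fixing sign conventions so that ``fiber weight $\chi_i$ at $x_i$'' matches the support-function normalization in the ampleness criterion, and checking carefully that the \emph{equality} of polytopes in condition (2), together with condition (1), rules out two adjacent maximal cones carrying the same $\chi$, i.e.\ forces the normal fan of $P_{\uchi}$ to equal $\Sigma$ rather than a proper coarsening. A secondary point is the uniqueness clause, which relies on the vanishing of $\Pic$ of the affine charts and the rigidity of equivariant structures over full-dimensional cones.
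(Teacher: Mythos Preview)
Your proposal is correct and aligns with the paper's approach: the paper's own proof consists solely of the citation ``See \cite[Chapter 3]{Fu},'' and your sketch is precisely an unpacking of that reference via the standard support-function/ampleness dictionary. No further comparison is needed.
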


\begin{proof}
See \cite[Chapter 3]{Fu}.
\end{proof}

\begin{remark}
Let us indicate the meaning of condition (2) in the theorem.  By definition, a polytope is an intersection of half-spaces---one half-space for each top-dimensional face of the polytope.  At each vertex of this polytope, we may construct a cone by taking the intersection of only those half-spaces which contain the vertex (equivalently, only those half-spaces corresponding to a face incident with the vertex).  Condition (2) states that the cone so obtained from the vertex $\chi_i$ is some translate of the dual  cone of $C_i$.
\end{remark}

\section{Categories of $\Theta$-sheaves}
\label{sec:three}

\subsection{The poset $\bGamma(\Sigma,M)$}

Let $\bGamma(\Sigma,M)$ denote the set of pairs $(\sigma,\chi)$ where $\sigma$ is a cone in $\Sigma$ and $\chi$ is an integral coset of $\sigma^\perp$---i.e. a coset that passes through a lattice vector.  We endow $\bGamma(\Sigma,M)$ with a partial order by setting $(\sigma,\phi) \leq (\tau,\psi)$ whenever either of the following equivalent conditions are satisfied:
\begin{itemize}
\item $\tau \subset \sigma$, and if $\overline{\phi}$ denotes the image of $\phi$ in $M_\bR/\tau^\perp$ then $\overline{\phi} - \psi \in \tau^\vee$.
\item The subsets $\phi + \sigma^\vee \subset M_\bR$ and $\psi + \tau^\vee \subset M_\bR$ have $\phi + \sigma^\vee \subset \psi + \tau^\vee$.
\end{itemize}

We may regard the poset $\bGamma(\Sigma,M)$ as a category in the usual way.  Let $\bGamma(\Sigma,M)_\sfR$ denote the $\sfR$-linearization of this category.  More precisely, $\bGamma(\Sigma,M)_\sfR$ is the dg category defined in the following way:
\begin{itemize}
\item The objects of $\bGamma(\Sigma,M)_\sfR$ are the elements of $\bGamma(\Sigma,M)$.
\item The hom complexes are concentrated in degree zero, and are given by
$$\dghom((\sigma,\phi),(\tau,\psi)) = \bigg\{ \begin{array}{cl} \sfR & \text{if $(\sigma,\phi) \leq (\tau,\psi)$,} \\ 0 & \text{otherwise.}  \end{array}$$
\item The composition map
$$\dghom((\tau,\psi),(\upsilon,\chi))\otimes_\sfR \dghom((\sigma,\phi),(\tau,\psi)) \to \dghom((\sigma,\phi),(\upsilon,\chi))$$
is given by $1 \otimes_{\sfR} 1 \mapsto 1$ if $(\sigma,\phi) \leq (\tau,\psi)$ and $(\tau,\psi) \leq (\upsilon,\chi)$, and is the zero map otherwise.
\end{itemize}

\subsection{The sheaves $\Theta$ and $\Theta'$}

In this section we define the objects $\Theta(\sigma,\chi) \in \Sh_c(M_\bR)$ and $\Theta'(\sigma,\chi) \in \cQ_T(X)$ indexed by $\bGamma(\Sigma,M)$.

\begin{definition}
\label{def:theta}
Let $(\sigma,\chi) \in \bGamma(\Sigma,M)$.  Let $j$ denote the inclusion map $j:\chi + \sigma^\perp \hookrightarrow M_\bR$.  Define $\Theta(\sigma,\chi) \in \Sh_c(M_\bR)$ to be the costandard sheaf on $M_\bR$ associated to the open set $(\chi+ \sigma^\vee)^\circ \subset M_\bR$, i.e.
$$\Theta(\sigma,\chi) = j_{(\chi + \sigma^\vee)^\circ !} \omega_{(\chi + \sigma^\vee)^\circ}$$
\end{definition}

To define the quasicoherent counterparts of the sheaves $\Theta(\sigma,\chi)$, let 
$$X_\sigma = \Spec \sfR[\sigma^\vee \cap M]$$
 be the $T$-stable affine open subset of $X$ corresponding to $\sigma \in \Sigma$.  To give an equivariant quasicoherent sheaf on $X_\sigma$ is equivalent to giving an $M$-graded $\sfR[\sigma^\vee \cap M]$-module.  Let $\cO_\sigma(\chi)$ denote the quasicoherent sheaf associated to the module $\sfR[(\chi + \sigma^\vee) \cap M]$ with the obvious $M$-grading.

\begin{definition}
\label{def:thetap}
Let $(\sigma,\chi) \in \bGamma(\Sigma,M)$.  Let $j$ denote the inclusion map $X_\sigma \hookrightarrow X$.  Define
$$\Theta'(\sigma,\chi) = j_* \cO_\sigma(\chi)$$
\end{definition}

The sheaves $\{\Theta(\sigma,\chi)\}$ and $\{\Theta'(\sigma,\chi)\}$ have identical (and very simple) patterns of Ext-groups:

\begin{proposition}
\label{prop:exts}
Let $(\sigma,\phi)$ and $(\tau,\psi)$ be elements of $\bGamma(\Sigma,M)$.  Then
\begin{enumerate}
\item
$$
\Ext^i(\Theta(\sigma,\phi),\Theta(\tau,\psi)) \cong \bigg\{ \begin{array}{cl}
\sfR & \text{if $i = 0$ and $(\sigma,\phi) \leq (\tau,\psi)$,} \\
0 & \text{otherwise.}
\end{array}
$$
\item
$$\Ext^i(\Theta'(\sigma,\phi),\Theta'(\tau,\psi)) \cong \bigg\{\begin{array}{cl}
\sfR & \text{if $i = 0$ and $(\sigma,\phi) \leq (\tau,\psi)$,} \\
0 & \text{otherwise.}
\end{array}$$
\end{enumerate}
\end{proposition}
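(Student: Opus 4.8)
The plan is to compute both Ext-patterns directly and observe they coincide with the hom-pattern of the poset $\bGamma(\Sigma,M)_\sfR$. For part (2), on the coherent side, I would first reduce the computation of $\Ext^i(\Theta'(\sigma,\phi),\Theta'(\tau,\psi))$ in $\cQ_T(X)$ to a computation over the affine chart $X_\tau$. Since $\Theta'(\tau,\psi) = j_{\tau *}\cO_\tau(\psi)$ with $j_\tau: X_\tau \hookrightarrow X$ an affine open inclusion, pushforward along $j_\tau$ is exact and fully faithful at the level of quasicoherent sheaves, so $\Ext^i_{X}(\Theta'(\sigma,\phi), j_{\tau *}\cO_\tau(\psi)) \cong \Ext^i_{X_\tau}(j_\tau^*\Theta'(\sigma,\phi), \cO_\tau(\psi))$. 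Now $j_\tau^* \Theta'(\sigma,\phi)$ is the restriction to $X_\tau$ of $j_{\sigma*}\cO_\sigma(\phi)$; one identifies this with $\cO_{X_\tau}(\phi')$, a direct limit of line bundles, corresponding to the $M$-graded module obtained by localizing. Concretely, everything becomes a computation of $\Ext$ between $M$-graded modules over the semigroup ring $\sfR[\tau^\vee \cap M]$, and since $\cO_\tau(\psi)$ is a free rank-one graded module up to a shift (it is $\sfR[(\psi+\tau^\vee)\cap M] = \chi^\psi \cdot \sfR[\tau^\vee\cap M]$ as a graded module), the higher Ext groups vanish and $\Hom$ is computed by counting graded module maps: a graded map $\sfR[(\chi'+\tau^\vee)\cap M] \to \sfR[(\psi + \tau^\vee)\cap M]$ exists and is unique up to scalar precisely when $(\psi + \tau^\vee) \subset (\chi' + \tau^\vee)$, i.e. when the generator of the target lies in the source. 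Unwinding the localization step, this inclusion condition is exactly the second characterization of $(\sigma,\phi)\leq(\tau,\psi)$ in the definition of $\bGamma(\Sigma,M)$.

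For part (1), on the constructible side, I would use the adjunction properties of the costandard sheaves. Write $U_\sigma = (\chi_\phi + \sigma^\vee)^\circ$ and $U_\tau = (\chi_\psi + \tau^\vee)^\circ$ for the relevant conical open sets, so $\Theta(\sigma,\phi) = j_{U_\sigma !}\omega_{U_\sigma}$. Using the $(j_{!}, j^{!})$ adjunction together with $\omega_{U_\sigma} = j_{U_\sigma}^! \sfR_{M_\bR}$ and the dual description $\Theta(\tau,\psi) = j_{U_\tau !}\omega_{U_\tau}$, one computes
$$\Hom(\Theta(\sigma,\phi),\Theta(\tau,\psi)) \cong \Hom(\omega_{U_\sigma}, j_{U_\sigma}^! j_{U_\tau !}\omega_{U_\tau}) \cong \Gamma_{U_\sigma}(U_\sigma \cap \text{stuff}),$$
and more efficiently, applying Verdier duality $\cD$ turns costandard sheaves into standard sheaves $i_{U !}\sfR_U \mapsto$ (roughly) the extension by zero, reducing the problem to computing $R\Hom(\sfR_{U_\sigma}, \sfR_{U_\tau})$ or a mild variant thereof on $M_\bR$. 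Since $U_\sigma$ and $U_\tau$ are both contractible open convex cones (translates of interiors of dual cones), the complex $R\Gamma(U_\sigma; \sfR_{U_\tau})$ has cohomology $\sfR$ in degree zero when $U_\sigma \subset U_\tau$ and is acyclic otherwise---here the key geometric input is that when $U_\sigma \not\subset U_\tau$, the intersection $U_\sigma \cap U_\tau$, while possibly nonempty, still has the property that $U_\sigma \setminus U_\tau$ is ``seen'' by the relevant relative cohomology and kills the Hom. The inclusion $U_\sigma \subset U_\tau$ is, again by the second bullet in the definition of the poset, exactly $(\sigma,\phi) \leq (\tau,\psi)$.

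The main obstacle I anticipate is the constructible vanishing statement, part (1), in the case where the two open cones overlap but neither contains the other: one must verify that the relative/local cohomology computation genuinely vanishes in all degrees, not merely in degree zero. This requires care with the boundary behavior of the conical open sets $(\chi+\sigma^\vee)^\circ$ and a clean identification of which relative cohomology group governs the Hom---presumably $R\Gamma_c$ of an open convex subset of some affine subspace, or equivalently reduced cohomology of a sphere-like link---so the contractibility/convexity of the relevant cones does all the work once set up correctly. On the coherent side the analogous subtlety is milder: one must confirm that localizing $\cO_\sigma(\phi)$ to $X_\tau$ for $\tau$ not a face of $\sigma$ can still produce a nonzero graded module and handle the grading bookkeeping, but the absence of higher Ext is automatic from freeness, so no genuine homological difficulty arises there. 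Once both computations are in hand, the Ext-patterns manifestly agree with $\dghom_{\bGamma(\Sigma,M)_\sfR}$, which is the content of the proposition.
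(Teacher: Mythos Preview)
Your approach matches the paper's in outline, but there is one concrete error and one underspecified step worth naming.

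In part (2), your justification for the vanishing of higher Ext is backwards: freeness of the target $\cO_\tau(\psi)$ gives $\Ext^i(\cO_\tau(\psi),-)=0$, not $\Ext^i(-,\cO_\tau(\psi))=0$. The paper reduces by adjunction to $\Hom(j_\tau^* j_{\sigma*}\cO_\sigma(\phi),\cO_\tau(\psi)[i])$ and then appeals to affineness of $X_\tau$; the honest reason the higher graded Ext's vanish is that the \emph{source} $j_\tau^* j_{\sigma*}\cO_\sigma(\phi)\cong i_*\cO_{\sigma\cap\tau}(\phi)$ is a localization, hence a filtered colimit of graded free modules $A(\mu_n)$, so $R\Hom_{gr}$ becomes an inverse limit of rank-one $\sfR$-modules satisfying Mittag--Leffler. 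You also need to separately treat the case $\tau\not\subset\sigma$: there the source module is a genuine localization (not a shift of $A$), and the paper notes directly that it admits no nonzero graded map into any finitely generated $A$-module.

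In part (1), you correctly locate the difficulty but do not identify the geometric fact that resolves it. The paper tensors by $\ori[-\dim M_\bR]$ to reduce to $\Hom(j_{U_\sigma!}\sfR, j_{U_\tau!}\sfR[i])$, then uses $(j_!,j^!)$-adjunction to rewrite this as the relative cohomology $H^i(U_\sigma, A)$ with $A = U_\sigma \setminus U_\tau$. The key point---which you should state and use rather than hope emerges---is that because both $\sigma^\vee$ and $\tau^\vee$ are convex, either $U_\sigma\subset U_\tau$ (so $A=\varnothing$ and the answer is $\sfR$ in degree $0$) or $A$ is contractible (so the relative cohomology vanishes). This dichotomy is the entire content of the constructible computation; vague appeals to ``sphere-like links'' or $R\Gamma_c$ will not substitute for it.
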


\begin{proof}
By tensoring $\Theta(\sigma,\phi)$ and $\Theta(\tau,\psi)$ with $\ori[-\dim(M_\bR)]$, we have
$$
\Ext^i(\Theta(\sigma,\phi),\Theta(\tau,\psi)) = \Hom(j_{(\phi+\sigma^\vee)^\circ !} \sfR,j_{(\psi+\tau^\vee)^\circ !}\sfR[i]).
$$
By the standard adjunction this is isomorphic to
$$
\Hom(\sfR,j_{(\phi+\sigma^\vee)^\circ}^! j_{(\psi+\tau^\vee)^\circ !}\sfR[i]) =
H^i(
(\phi+\sigma^\vee)^\circ,j_{(\phi+\sigma^\vee)^\circ}^! j_{(\psi+\tau^\vee)^\circ !}\sfR).
$$
The sheaf $j_{(\phi+\sigma^\vee)^\circ}^! j_{(\psi+\tau^\vee)^\circ !}\sfR$ is the extension-by-zero
of the constant sheaf on $(\phi+\sigma^\vee)^\circ \cap (\psi+\tau^\vee)^\circ$.
Setting $A = (\phi+\sigma^\vee)^\circ - (\phi+\sigma^\vee)^\circ \cap (\psi+\tau^\vee)^\circ$, the sheaf cohomology above is isomorphic to the relative cohomology
$$
H^i((\phi+\sigma^\vee)^\circ,A)
$$
Since $\sigma^\vee$ and $\tau^\vee$ are convex, there are two possibilities.
Either $\phi+\sigma^\vee \subset \psi+\tau^\vee$, in which case $A$ is empty, or else $A$ is contractible.  

This completes the proof of part (1).
Let us now prove part (2).  The Ext group is isomorphic to $\Hom(j_{\sigma*} \cO_\sigma(\phi),j_{\tau*} \cO_\tau(\psi)[i])$, which by adjunction is isomorphic to
$$\Hom(j_\tau^* j_{\sigma *} \cO_\sigma(\phi), \cO_\tau(\psi)[i]).$$
Since $X_\tau$ is affine, this group vanishes when $i \neq 0$, as required.  Let us consider the case $i = 0$.

Note that we have $j_\tau^* j_{\sigma *} \cO_\sigma(\phi)$ is isomorphic to $i_*\cO_{\sigma \cap \tau}(\phi)$, where $i$ denotes the inclusion of $X_{\sigma \cap \tau}$ into $X_\tau$.  This cannot map into
$\cO_\tau(\psi)$, or into any finitely generated $\sfR[\tau^\vee \cap M]$-module, unless $(\sigma \cap \tau)^\vee = \tau^\vee$, or equivalently unless $\tau \subset \sigma$.  In that case we have
$$
\Hom(i_* \cO_{\sigma \cap \tau}(\phi),\cO_\tau(\psi)) = \Hom(\cO_\tau(\phi),\cO_\tau(\psi))
$$
which is $\sfR$ whenever $\phi -\psi \in \tau^\vee$ and $0$ otherwise.  This completes the proof.
\end{proof}

In fact there are canonical generators of $\Hom(\Theta(\sigma,\chi),\Theta(\tau,\psi))$ and $\Hom(\Theta'(\sigma,\chi),\Theta'(\tau,\psi))$ whenever these groups are nonvanishing.  The composition
maps
$$\begin{array}{rcl}
\Hom(\Theta(\tau,\psi),\Theta(\upsilon,\chi)) \otimes_\sfR \Hom(\Theta(\sigma,\phi),\Theta(\tau,\psi)) & \to &\Hom(\Theta(\sigma,\phi),\Theta(\upsilon,\chi))\\
\Hom(\Theta'(\tau,\psi),\Theta'(\upsilon,\chi)) \otimes_\sfR \Hom(\Theta'(\sigma,\phi),\Theta'(\tau,\psi)) & \to & \Hom(\Theta'(\sigma,\phi),\Theta'(\upsilon,\chi))
\end{array}
$$ 
take canonical generators to canonical generators.

\begin{theorem}
\label{thm:mainfirst}
Let $X$ be a toric variety with fan $\Sigma$.
Let $\ltr \subset \Sh_c(M_\bR)$ denote the full triangulated subcategory generated by the objects $\Theta(\sigma,\chi)$.  Let $\ltrp \subset \cQ_T(X)$ denote the full triangulated subcategory generated by the objects $\Theta'(\sigma,\chi)$.  There exists a quasi-equivalence of dg categories $\kappa:\ltrp \to \ltr$ with the following properties:
\begin{itemize}
\item $\kappa(\Theta'(\sigma,\chi)) \cong \Theta(\sigma,\chi)$.
\item If $(\sigma,\phi) \leq (\tau,\psi)$ in $\bGamma(\Sigma,M)$, then the map $\Ext^0(\Theta'(\sigma,\phi),\Theta'(\tau,\psi)) \to \Ext^0(\Theta(\sigma,\phi),\Theta(\tau,\psi))$ induced by $\kappa$ carries the canonical generator of the source to the canonical generator of the target.
\end{itemize}
\end{theorem}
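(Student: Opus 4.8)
The plan is to construct the equivalence $\kappa$ by exhibiting both $\ltrp$ and $\ltr$ as the triangulated hull $\Tr(C)$ of a common small dg category $C$, namely the $\sfR$-linear poset category $\bGamma(\Sigma,M)_\sfR$ defined in Section~\ref{sec:three}. The key input is Proposition~\ref{prop:exts}, which computes the Ext-patterns of the two collections $\{\Theta(\sigma,\chi)\}$ and $\{\Theta'(\sigma,\chi)\}$ and shows they coincide with the hom-complexes of $\bGamma(\Sigma,M)_\sfR$. So the first step is to promote this computation from a statement about cohomology to a statement about chain complexes: I want honest dg functors
$$F:\bGamma(\Sigma,M)_\sfR \to \Sh_c(M_\bR), \qquad F':\bGamma(\Sigma,M)_\sfR \to \cQ_T(X)$$
sending $(\sigma,\chi)$ to $\Theta(\sigma,\chi)$ and $\Theta'(\sigma,\chi)$ respectively, and sending the generating morphism of $\dghom((\sigma,\phi),(\tau,\psi)) = \sfR$ to the canonical generators discussed just before the theorem. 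Constructing these functors amounts to checking that the canonical generators compose correctly (the composition maps take canonical generators to canonical generators, as already asserted in the excerpt) and that the hom-complexes on the source side are literally $\sfR$ in degree zero, which they are by definition of $\bGamma(\Sigma,M)_\sfR$. On the constructible side the canonical generator of $\Hom(\Theta(\sigma,\phi),\Theta(\tau,\psi))$ for $(\sigma,\phi)\le(\tau,\psi)$ is the costandard restriction/corestriction map associated to the inclusion of open cones $(\phi+\sigma^\vee)^\circ \subset (\psi+\tau^\vee)^\circ$; on the coherent side it is the map $j_{\sigma*}\cO_\sigma(\phi)\to j_{\tau*}\cO_\tau(\psi)$ induced by the inclusion $(\phi+\sigma^\vee)\cap M \hookrightarrow (\psi+\tau^\vee)\cap M$ of monoids-with-grading. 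One must verify these are genuine maps of complexes (not just maps in the homotopy category) — after choosing injective/flasque resolutions on the constructible side and noting that on the coherent side $\Theta'$ are honest sheaves, hence fibrant-type objects after the localization — so that the functoriality needed to define $F$ and $F'$ holds strictly or at worst up to coherent homotopy. If only up to coherent homotopy, I replace $\bGamma(\Sigma,M)_\sfR$ by a cofibrant/quasi-equivalent resolution and build $F,F'$ as $A_\infty$- or dg functors there; the formality of the source (hom-complexes in degree zero) makes this harmless.

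\textbf{Second step: passing to triangulated hulls.} Any dg functor $C\to D$ induces, by the universal property of $\Tr(-)$, a dg functor $\Tr(C)\to \Tr(D)$, and since $C$ sits as a full subcategory of $D$ we have $\Tr(C)\simeq \langle C\rangle$ inside $D$ (this is exactly the equivalence recorded in Section~\ref{sec:notation}). Applying this to $F$ and $F'$ gives dg functors
$$\Tr(\bGamma(\Sigma,M)_\sfR) \to \ltr, \qquad \Tr(\bGamma(\Sigma,M)_\sfR) \to \ltrp.$$
The first is essentially surjective by construction ($\ltr$ is \emph{generated} by the $\Theta(\sigma,\chi)$, which are in the image), and it is fully faithful because $F$ is fully faithful on the level of hom-complexes — that is precisely Proposition~\ref{prop:exts}(1) combined with the fact that the source hom-complexes are concentrated in degree zero, so quasi-isomorphism of hom-complexes is detected by $\Ext^0$ and the vanishing of higher $\Ext$. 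Fully faithfulness then propagates to the triangulated hulls by a standard dévissage on triangles (both sides are generated by the image of $C$, and the functor commutes with cones and shifts). The same argument with $F'$ and Proposition~\ref{prop:exts}(2) shows $\Tr(\bGamma(\Sigma,M)_\sfR)\to\ltrp$ is a quasi-equivalence. Composing one with a quasi-inverse of the other yields the desired
$$\kappa:\ltrp \xrightarrow{\ \sim\ } \ltr.$$
The two bullet-pointed properties are then immediate: $\kappa$ sends $F'(\sigma,\chi)=\Theta'(\sigma,\chi)$ to $F(\sigma,\chi)=\Theta(\sigma,\chi)$, and it sends the canonical generator to the canonical generator because both $F$ and $F'$ do so by construction, and the comparison map on $\Ext^0$ is just the composite of $F'^{-1}$ and $F$ on hom-complexes.

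\textbf{The main obstacle} I anticipate is the strictification in the first step: making $F$ and $F'$ into \emph{dg} functors rather than mere assignments that are functorial up to homotopy. The delicate point is that the canonical generators must compose strictly on the nose (associativity and the composite $(\sigma,\phi)\le(\tau,\psi)\le(\upsilon,\chi)\mapsto$ canonical generator), and while this is clear ``morally'' — on the coherent side these are all genuine maps of honest sheaves and compose in the category of sheaves; on the constructible side one must fix functorial resolutions — turning it into a clean statement requires either choosing a model structure and fibrant replacements compatibly across all $(\sigma,\chi)$ at once, or invoking that a dg functor out of a category with hom-complexes concentrated in degree zero is the same data as an honest functor on homotopy categories together with vanishing obstructions in negative Ext (which vanish here). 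I would handle this by working with the explicit Čech/flasque models: on $M_\bR$ the costandard sheaves $\Theta(\sigma,\chi)$ have canonical flasque resolutions via the inclusion of the open cone, and the corestriction maps are strictly functorial for these; on $X$ the $\Theta'(\sigma,\chi)=j_{\sigma*}\cO_\sigma(\chi)$ are already $j_{\sigma*}$ of quasicoherent sheaves and the restriction maps are strictly functorial in the category of quasicoherent sheaves before localization. Everything else — essential surjectivity, full faithfulness, the behavior on canonical generators — is then formal.
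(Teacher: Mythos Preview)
Your proposal is correct and follows essentially the same route as the paper: both exhibit $\ltrp$ and $\ltr$ as the triangulated hull of the poset dg category $\bGamma(\Sigma,M)_\sfR$ via full dg embeddings built from Proposition~\ref{prop:exts}, and define $\kappa$ as the composite $\ltrp \xleftarrow{\sim} \Tr(\bGamma(\Sigma,M)_\sfR) \xrightarrow{\sim} \ltr$. You are more explicit than the paper about the strictification issue (ensuring the canonical generators compose on the nose so that $F,F'$ are honest dg functors), which the paper simply asserts; your proposed resolution via the observation that the $\Theta'(\sigma,\chi)$ are genuine sheaves and the $\Theta(\sigma,\chi)$ admit functorial costandard models is the right way to fill that in.
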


\begin{proof}
By Proposition \ref{prop:exts}, there is a dg functor
$\bGamma(\Sigma,M)_\sfR \hookrightarrow \Sh(M_\bR)$ that carries $(\sigma,\chi)$ to $\Theta(\sigma,\chi)$, and such that the map of chain complexes
$$\dghom_{\bGamma(\Sigma,M)_\sfR}((\sigma,\phi),\Theta(\tau,\psi)) \to \dghom_{\Sh_c(M_\bR)}(\Theta(\sigma,\phi),\Theta(\tau,\psi))$$
maps the canonical generator of $h^0$ of the source to the canonical generator of $h^0$ of the target.  In particular, this functor is a full dg embedding.  By the same reasoning there is a full dg embedding $\bGamma(\Sigma,M)_\sfR \hookrightarrow \cQ_T(M_\bR)$ with the same properties.  It follows that we have a diagram of quasiequivalences
$$\ltrp \stackrel{\sim}{\leftarrow} \Tr(\bGamma(\Sigma,M)_\sfR) \stackrel{\sim}{\to} \ltr$$
We let $\kappa$ be the composition.
\end{proof}

In Section  \ref{sec:shardsheaves} and Section \ref{sec:finfib}, we show that the categories $\ltr$ and $\ltrp$ can be described more intrinsically, instead of by generators.

\begin{corollary}
\label{cor:perfect}
Let $X$ be a toric variety corresponding to a fan $\Sigma$.  The dg functor $\kappa$ defines a full embedding of $\Perf_T(X)$ into $\Sh_c(M_\bR)$.
\end{corollary}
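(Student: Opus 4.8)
The plan is to deduce the corollary purely formally from Theorem~\ref{thm:mainfirst} once we establish the single containment $\Perf_T(X)\subseteq\ltrp$. Indeed, $\ltr$ is by definition a \emph{full} triangulated subcategory of $\Sh_c(M_\bR)$ and $\kappa\colon\ltrp\to\ltr$ is a quasi-equivalence, so if $\Perf_T(X)\subseteq\ltrp$ then the composite $\Perf_T(X)\hookrightarrow\ltrp\xrightarrow{\ \kappa\ }\ltr\hookrightarrow\Sh_c(M_\bR)$ is a composite of fully faithful dg functors, hence a full embedding. Thus the whole problem is to show that every perfect complex of equivariant quasicoherent sheaves on $X$ is generated, as an object of $\cQ_T(X)$, by the sheaves $\Theta'(\sigma,\chi)$.

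First I would reduce to affine toric charts by a \v Cech argument. Let $\sigma_1,\dots,\sigma_r$ be the maximal cones of $\Sigma$; the $X_{\sigma_i}$ form a finite affine open cover of $X$, and since toric varieties are separated, all intersections $X_{\sigma_{i_1}}\cap\cdots\cap X_{\sigma_{i_k}}=X_\rho$ are again affine, with $\rho=\sigma_{i_1}\cap\cdots\cap\sigma_{i_k}\in\Sigma$. For $F\in\Perf_T(X)$, the equivariant \v Cech complex associated to this cover, whose terms are the $(j_\rho)_*\,(j_\rho)^*F$ for $\rho$ ranging over such intersections, is quasi-isomorphic to $F$: each open immersion $j_\rho\colon X_\rho\hookrightarrow X$ is an affine morphism, so $(j_\rho)_*$ already computes the derived pushforward on quasicoherent sheaves, and for an affine cover of a separated scheme the \v Cech complex computes the identity functor. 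As the cover is finite, the totalization of this complex is built from its terms by finitely many cones and shifts, so $F$ lies in the triangulated subcategory of $\cQ_T(X)$ generated by $\{(j_\rho)_*\,(j_\rho)^*F\}$, where moreover each $(j_\rho)^*F\in\Perf_T(X_\rho)$ since $u^*$ preserves perfection.

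It then suffices to show, for each $\rho\in\Sigma$ and each $G\in\Perf_T(X_\rho)$, that $(j_\rho)_*G\in\ltrp$. Since $(j_\rho)_*$ is a triangulated dg functor and, by Definition~\ref{def:thetap}, $(j_\rho)_*\cO_\rho(\chi)=\Theta'(\rho,\chi)$, this follows once $G$ lies in the subcategory of $\cQ_T(X_\rho)$ generated by the twisted structure sheaves $\cO_\rho(\chi)$, i.e.\ by the finite graded free modules over the affine semigroup ring $\sfR[\rho^\vee\cap M]$. On an affine scheme perfect objects are strictly perfect, so $G$ is quasi-isomorphic to a bounded complex of equivariant vector bundles, i.e.\ of finitely generated graded projective $\sfR[\rho^\vee\cap M]$-modules, and each such module is a graded direct summand of a finite graded free one; feeding this through the (finite) complex exhibits $G$ in the thick subcategory generated by the $\cO_\rho(\chi)$.

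The main obstacle is precisely that last sentence: a finitely generated graded projective module over a \emph{singular} affine toric ring need not be graded free, and need not even admit a finite graded free resolution (this already fails for, e.g., affine cones with nontrivial reduced $K_0$), so $G$ need not belong to the \emph{triangulated} hull of the $\cO_\rho(\chi)$ before idempotent completion. To make the argument go through one should take the triangulated hull $\Tr$ (hence $\ltr$ and $\ltrp$) to be idempotent-complete, so that ``generated by'' means the thick subcategory and the affine step becomes the standard fact that the compact objects of $\cQ_T(X_\rho)$ coincide with the thick subcategory generated by the compact generators $\{\cO_\rho(\chi)\}$; alternatively, when $\sfR$ is a field one can bypass the issue, since graded projectives over an affine toric ring are then graded free. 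The remaining ingredients---the \v Cech bookkeeping, separatedness of toric varieties, and the fact that open immersions between toric charts are affine morphisms with $u^*$, $u_*$ behaving as expected---are routine.
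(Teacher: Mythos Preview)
Your approach is essentially the paper's: reduce to affine toric charts by a \v Cech resolution with respect to the maximal cones, observe that perfect complexes on affines are strictly perfect, and then generate the affine pieces from the $\Theta'(\rho,\chi)$. The \v Cech bookkeeping is fine.

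The difference is in the affine step, and your ``main obstacle'' is a red herring. You worry that a finitely generated \emph{graded} projective over a singular affine toric ring might not be graded free, citing nontrivial reduced $K_0$. But reduced $K_0$ is a non-equivariant invariant; once you remember the full $M$-grading (i.e.\ the $T$-equivariant structure), the situation is much more rigid. The fact the paper invokes, and which you should invoke, is that on an affine toric chart $X_\rho$ every $T$-equivariant vector bundle splits as a direct sum of equivariant line bundles $\cO_\rho(\chi)$. Equivalently: over (say) a field $\sfR$, every finitely generated $M$-graded projective module over $\sfR[\rho^\vee\cap M]$ is graded free. This follows from a graded Nakayama argument using the distinguished graded maximal ideal (after factoring out the Laurent directions $\sigma^\perp\cap M$), and has nothing to do with whether $\rho^\vee\cap M$ is smooth or singular as a semigroup. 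You actually state this fact in your ``alternatively, when $\sfR$ is a field'' aside; that is the main point, not an alternative.

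With that splitting in hand, each $(j_\rho)_*(j_\rho)^*F$ is already a finite iterated extension of objects $\Theta'(\rho,\chi)$, and no passage to idempotent completion is needed. So drop the $K_0$ paragraph, promote the splitting fact to the affine step, and your argument becomes the paper's.
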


\begin{proof}
We just have to show that $\Perf_T(X)$ belongs to the domain of $\kappa$---i.e. that $\Perf_T(X_\Sigma) \subset \ltrp$.  First note that on any affine toric variety, a $T$-equivariant vector bundle splits as a sum of line bundles, necessarily of the form $\cO(\chi_\alpha)$.  On an affine $T$-variety a perfect complex is strictly perfect---it is quasi-isomorphic to a bounded complex of equivariant vector bundles.  It follows that if $j:U \hookrightarrow X$ is the inclusion of a $T$-stable affine chart, and $\cE$ is a $T$-equivariant perfect complex on $U$, then $j_* \cE$ belongs to $\ltrp$.

Now let $\cF$ be a perfect complex on $X$.  Fix a total order on the set of maximal cones $C_1,\ldots,C_v$ in $\Sigma$, and put $C_{i_0\cdots i_k} := C_{i_0} \cap \cdots \cap C_{i_k}$.  We have a \v Cech resolution of $\cF$---the complex $\cF$ is quasi-isomorphic to the total complex of the bicomplex
$$
\bigoplus_{i_0} j_{C_{i_0}*} \cF \vert_{X_{C_{i_0}}} \to \bigoplus_{i_0 < i_1} j_{C_{i_0 i_1}*} \cF\vert_{X_{C_{i_0 i_1}}} \to \cdots
$$
As each summand of each term of this complex belongs to $\ltrp$, the horizontal length of this bicomplex is $\leq v$, and each direct sum is finite, this establishes that $\cF \in \ltrp$.
\end{proof}

\subsection{Coherent-constructible dictionary---line bundles}

In this section we consider the case where $X$ is a proper toric variety and the corresponding fan $\Sigma$ is complete (i.e. the support of $\Sigma$ is all of $N_\bR$.)  We will compute $\kappa(\cL)$ when $\cL$ is an equivariant line bundle on $X$.

Let us make the following notation.  We let $C_1,\ldots,C_v$ and $C_{i_0 \cdots i_k}$ be as in the proof of Corollary \ref{cor:perfect}.  If $\uchi = (\chi_1,\ldots,\chi_v)$ is a twisted polytope,
for each $i_0 < \ldots < i_k$, let $\chi_{i_0\cdots i_k} \subset M_\bR$ be the affine hull of $\chi_{i_0},\cdots,\chi_{i_k}$.  Note that $\chi_{i_0 \cdots i_k}$ is a coset of $(C_{i_0\cdots i_k})^\perp$.  Whenever $j_0 < j_1 < \ldots < j_{\ell}$ refines $i_0 < \ldots< i_k$, we have a canonical map
$\Theta(C_{i_0 \ldots i_k},\chi_{i_0 \ldots i_k}) \to \Theta(C_{j_0 \cdots j_\ell},\chi_{j_0 \ldots j_\ell})$.  

\begin{definition}
For each twisted polytope $\uchi$, define $P(\uchi) \in \Sh_c(M_\bR)$ to be the cochain complex
$$
\bigoplus_{i_0} \Theta(C_{i_0},\chi_{i_0})
\to
\bigoplus_{i_0 < i_1} \Theta(C_{i_0 i_1},\chi_{i_0 i_1})
\to
\cdots
$$
\end{definition}

We regard the first term of the complex defining $P(\uchi)$ to be in degree zero, but because $\Theta(\sigma,\chi) = j_! \ori[\dim(M_\bR)]$, this means that $P(\uchi)$ is isomorphic to a complex of sheaves whose first term is in degree $-\dim(M_\bR)$.

Let $\cO_X(\uchi)$ denote the line bundle on $X$ associated to $\uchi$.  From the \v Cech complex discussed in the proof of Corollary \ref{cor:perfect}, it is clear that $\kappa(\cO_X(\uchi)) \cong P(\uchi)$.  When $\cO(\uchi)$ is ample we can describe $P(\uchi)$ explicitly:

\begin{theorem}
\label{thm:linbun}
Let $X$ be a complete toric variety, and let $P(\uchi)\in \Sh(M_\bR)$ be as in the previous definition.
Denote the convex hull of $\{\chi_1,\ldots,\chi_v\}$ by $\bfP$ and its interior by $\bfP^\circ$.
If $\cO_X(\uchi)$ is ample, then $P(\uchi) \cong j_!(\omega_{\bfP^\circ}) = j_!(\ori_{\bfP^\circ}[\dim(M_\bR)]$, where $j: \bfP^\circ \hookrightarrow M_\bR$ denotes the inclusion map.
\end{theorem}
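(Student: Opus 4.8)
The plan is to show that the complex $P(\uchi)$ is quasi-isomorphic to $j_!(\omega_{\bfP^\circ})$ by exhibiting an explicit contracting homotopy for the complex everywhere except where the cohomology should be supported. The key observation is that $P(\uchi)$ is a complex of sheaves, so I can compute its cohomology stalkwise: for each point $x \in M_\bR$, the stalk $P(\uchi)_x$ is the complex $\bigoplus_{i_0} \Theta(C_{i_0},\chi_{i_0})_x \to \bigoplus_{i_0 < i_1} \Theta(C_{i_0 i_1},\chi_{i_0 i_1})_x \to \cdots$. Now $\Theta(C_{i_0\cdots i_k},\chi_{i_0\cdots i_k})_x$ is (up to the fixed shift and orientation twist) the stalk of the extension-by-zero of the constant sheaf on $(\chi_{i_0\cdots i_k} + C_{i_0\cdots i_k}^\vee)^\circ$, so it equals $\sfR[\dim M_\bR]$ if $x$ lies in that open set and $0$ otherwise. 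Hence the stalk complex at $x$ is, up to shift, the simplicial cochain complex of the subcomplex $K_x$ of the simplex on vertices $\{1,\ldots,v\}$ consisting of those faces $\{i_0,\ldots,i_k\}$ with $x \in (\chi_{i_0\cdots i_k} + C_{i_0\cdots i_k}^\vee)^\circ$.

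First I would record, using the ampleness hypothesis and Theorem \ref{thm:fulton3}(2), the geometric meaning of membership $x \in (\chi_{i_0\cdots i_k} + C_{i_0\cdots i_k}^\vee)^\circ$. For a single vertex $i$, condition (2) of Theorem \ref{thm:fulton3} says $\chi_i + C_i^\vee$ is exactly the ``vertex cone'' of $\bfP$ at $\chi_i$, i.e. the intersection of the half-spaces of $\bfP$ through $\chi_i$; and I expect $(\chi_i + C_i^\vee)^\circ$ to be (the interior of) that vertex cone. For a face $\{i_0,\ldots,i_k\}$ of the nerve — which by the twisted-polytope condition corresponds to a face $C_{i_0\cdots i_k}$ of $\Sigma$, hence, under ampleness, to a face $F$ of $\bfP$ whose vertices are exactly $\chi_{i_0},\ldots,\chi_{i_k}$ — the set $\chi_{i_0\cdots i_k} + C_{i_0\cdots i_k}^\vee$ is the ``face cone'' of $\bfP$ along $F$, and its interior meets $x$ precisely when $x$ lies in a suitable open neighborhood-cone of the relative interior of $F$. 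The upshot I want is: the poset of faces $\{i_0,\ldots,i_k\}$ with $x$ in the corresponding open set is, for each fixed $x$, the poset of faces of $\bfP$ whose ``open normal-cone region'' contains $x$, and this is a nonempty subcomplex $K_x$ of the nerve that is either the full simplex-minus-nothing situation (when $x \in \bfP^\circ$, in which case only the top face $\{1,\ldots,v\}$... ) — more precisely I expect $K_x$ to be contractible (in fact a cone, or the nerve of a poset with a minimum) whenever $x \notin \bfP^\circ$, and to have reduced cohomology $\sfR$ concentrated in the top degree when $x \in \bfP^\circ$.

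Then the proof concludes as follows. For $x \notin \bfP^\circ$: $K_x$ is a nonempty contractible subcomplex of the $(v-1)$-simplex, so its reduced simplicial cochain complex is acyclic, hence $P(\uchi)_x = 0$. For $x \in \bfP^\circ$: every vertex cone and every face cone of $\bfP$ contains $\bfP^\circ$ in its interior (this is where the full-dimensionality and ampleness genuinely enter), so $K_x$ is the entire $(v-1)$-simplex, whose reduced cochain complex has cohomology $\sfR$ in degree $v-1$... — wait, I need the cohomology in the single degree matching $[\dim M_\bR]$, so I should instead observe that the relevant normalization makes the complex compute $\widetilde{H}^\bullet$ of the nerve relative to the appropriate subcomplex, landing in degree $-\dim M_\bR$; comparing stalks this identifies $P(\uchi)$ with a sheaf whose stalk is $\ori_{\bfP^\circ,x}[\dim M_\bR]$ on $\bfP^\circ$ and $0$ off $\bfP^\circ$, which is exactly $j_!\omega_{\bfP^\circ}$. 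Finally I would check that the identification is not merely stalkwise but comes from an actual map of complexes — e.g. the augmentation $P(\uchi) \to $ (some explicit costandard-type sheaf) realizing the canonical maps — so that the quasi-isomorphism is canonical; alternatively one invokes that a bounded constructible complex with these stalks and these corestriction maps is forced to be $j_!\omega_{\bfP^\circ}$.

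The main obstacle I anticipate is the geometric/combinatorial bookkeeping in the middle paragraph: correctly matching the faces of the nerve $\{1,\ldots,v\}$ with the faces of $\bfP$ under the ampleness hypothesis, and proving that the subcomplex $K_x$ is contractible for $x \notin \bfP^\circ$ (for points $x$ near the boundary of $\bfP$ one has to see that the collection of face cones containing $x$ in their interior is closed under passing to the ``visible'' faces of $\bfP$ from $x$, giving a poset with a top or bottom element, hence a contractible nerve). Getting the degree/orientation shifts to come out to exactly $[\dim M_\bR]$ with the correct orientation sheaf is routine but must be done carefully, keeping track of the shift built into $\Theta(\sigma,\chi) = j_!\,\ori[\dim M_\bR]$.
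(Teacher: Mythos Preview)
Your overall strategy---compute stalks, identify each stalk complex with a \v Cech-type simplicial cochain complex indexed by subsets of $\{1,\ldots,v\}$, then analyze the combinatorics via the geometry of $\bfP$---is exactly the paper's approach, and your remark about needing an actual map (not just a stalkwise match) is addressed there by first writing down the canonical map $j_!\omega_{\bfP^\circ} \to P(\uchi)$ coming from the inclusions $\bfP^\circ \subset (\chi_i + C_i^\vee)^\circ$. However, there is a genuine error in your combinatorial setup.

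You define $K_x$ as the set of faces $\{i_0,\ldots,i_k\}$ with $x \in (\chi_{i_0\cdots i_k} + C_{i_0\cdots i_k}^\vee)^\circ$ and call it a subcomplex, but it is not one: since $S \subset T$ implies $(\chi_S + C_S^\vee)^\circ \subset (\chi_T + C_T^\vee)^\circ$, your $K_x$ is closed under taking \emph{supersets}, not subsets. In particular $K_x$ always contains the top face $\{1,\ldots,v\}$, so as a poset it always has a maximum and its nerve is always contractible; that criterion cannot distinguish $x \in \bfP^\circ$ from $x \notin \bfP^\circ$. The stalk complex is in fact the \emph{relative} cochain complex $C^\bullet(\Delta,K(x))$, where $K(x) := \Delta \setminus K_x$ is the honest subcomplex of faces with $x \notin (\chi_{i_0\cdots i_k} + C_{i_0\cdots i_k}^\vee)^\circ$. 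Thus what you must prove is that $|K(x)|$ is contractible for $x \notin \bfP^\circ$ (and empty for $x \in \bfP^\circ$, giving $H^0 = \sfR$ from the unreduced cochains of $\Delta$---this also resolves your degree confusion). For a concrete illustration of why ``$K_x$ has a top/bottom element'' is the wrong target: take $\bfP$ a square and $x$ just beyond one edge; then $K_x$ has two distinct minimal elements, yet the stalk complex is acyclic because $|K(x)|$ is a closed interval. The paper carries out the needed contractibility of $|K(x)|$ by recognizing $K(x)$ as the \v Cech nerve of the star cover of the union of faces of $\bfP$ visible from $x$, and noting that this visible region projects homeomorphically to a convex set. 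Your ``visible faces'' intuition is correct---it just has to be applied to $K(x)$, not to $K_x$.
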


\begin{proof}
Since $\bfP^\circ$ is an open subset of each $\chi_i + (C_i^\vee)^\circ$, we have a canonical map $j_! \omega_{\bfP^\circ} \to \Theta(C_i,\chi)$ for each $i$.  These maps assemble to a map of chain complexes $j_! \omega_{\bfP^\circ} \to P(\uchi)$.  To show that this map is a
quasi-isomorphism it suffices to show that it is a quasi-isomorphism on stalks.  That is,
we have to show that for each $x \in M_\bR$ the sequence
$$
\big(\bigoplus_{i_0} \Theta(C_{i_0},\chi_{i_0})\big)_x
\to
\big(\bigoplus_{i_0 < i_1} \Theta(C_{i_0 i_1},\chi_{i_0 i_1})\big)_x
\to
\cdots
$$
is acyclic when $x$ is not in the interior of the convex hull of $\uchi$ and concentrated in degree zero otherwise.

Let $\Delta$ be the combinatorial simplex on the numbers $\{1,\ldots,v\}$ and let $K(x)$ be the subsimplicial complex given by
$$\{i_0,\ldots,i_k\} \in K(x) \text{ if and only if }x \notin (\chi_{i_0\cdots i_k} + C_{i_0 \cdots i_k}^\vee)^\circ.$$
The cochain complex above is isomorphic to the relative simplicial cochain complex of the pair
$(\Delta,K(x))$.  Thus the $i$th cohomology of the complex is the relative cohomology group $H^i(\Delta,K(x))$.  When $x$ is in the interior of the convex hull, $K(x)$ is empty, so we have
$H^0 = \sfR$ and $H^i = 0$ for $i >0$ as required.

To complete the proof we have to show that the geometric realization $|K(x)|$ is contractible when $x$ is not in the interior of the convex hull.  Note that $K(x)$ is closely related to the union of faces of the convex hull which are ``visible'' from $x$.  In fact $\{i_0,\ldots,i_k\}$ is a $k$-simplex in $K(x)$ if and only if there is a line segment from $x$ to a face of $\bfP$ containing $\chi_{i_0},\cdots,\chi_{i_k}$ that does not pass through any point of $\bfP^\circ$.  Let us denote this union of faces by $K'$.

$K(x)$ is precisely the \v Cech nerve of the open cover of $K'$ whose charts are the ``stars'' of the vertices belonging to $K'$.  That is, the open subset of $K'$ corresponding to the vertex $w$ is the union of faces in $K'$ which contain $w$.  Note that $K'$ is homeomorphic to the projection of $\bfP$ onto any hyperplane lying between $x$ and $\bfP$, which will necessarily be convex and therefore contractible.  Thus $K'$ and $|K(x)|$ are contractible.

\end{proof}

\subsection{Coherent-constructible dictionary---functoriality and tensoriality}

In this section we show that the equivalence $\kappa:\ltrp \to \ltr$ between quasicoherent and constructible sheaves intertwines with appropriate pull-back and push-forward functors.  As a consequence it also intertwines the tensor product of quasicoherent sheaves (belonging to $\ltrp$) with the convolution product of constructible sheaves.

Let $N_1$ and $N_2$ be two lattices, let $\Sigma_1$ be a rational polyhedral fan in $N_{1,\bR}$ and let $\Sigma_2$ be a rational polyhedral fan in $N_{2,\bR}$.  For $i = 1$ or $2$, write $M_i, M_{i;\bR}, T_i,X_i,\ldots$ for the lattice dual to $N_i$, the real vector space dual to $N_{i;\bR}$, the algebraic torus $N_i \otimes \Gm$, the toric variety $X_{\Sigma_i}$, etc.  Write $\ltr_i$ (resp. $\ltrp_i$) for the subcategory of $\Sh_c(M_{i,\bR})$ (resp. $\cQ_{T_i}(X_i)$)  generated by the $\Theta(\sigma,\chi)$ (resp. $\Theta'(\sigma,\chi)$),  with $(\sigma, \chi) \in \bGamma(\Si_i)$.

Let us say that a map $f:N_1 \to N_2$ is \emph{fan-preserving} if for each $\sigma_1 \in \Sigma_1$, the image of $\sigma_1$ under $N_{1,\bR} \to N_{2,\bR}$ lies in another cone $\sigma_2 \in \Sigma_2$.  Note that $f$ induces
\begin{itemize}
\item a map $f \otimes 1_{\Gm}:T_1 \to T_2$.

\item a map $f^\vee:\sigma_2^\vee \cap M_2 \to \sigma_1^\vee \cap M_1$ for each pair of cones $\sigma_1 \in \Sigma_1$, $\sigma_2 \in \Sigma_2$ with $f(\sigma_1) \subset \sigma_2$.

\item a map $u_{f,\sigma_1,\sigma_2}:X_{\sigma_1} \to X_{\sigma_2}$ for a pair of cones $\sigma_1,\sigma_2$ as above.  (As usual $X_\sigma$ denotes the affine $T$-invariant chart of $X$ corresponding to the cone $\sigma$)
\item a map $u = u_f:X_1 \to X_2$ assembled from the $u_{f,\sigma_1,\sigma_2}$, equivariant with respect to $f \otimes 1_{\Gm}:T_1 \to T_2$.
\end{itemize}

More straightforwardly, a fan-preserving map induces a map of real vector spaces $M_{2,\bR} \to M_{1,\bR}$; we will denote this map by $v = v_f$.

\begin{theorem}[functoriality]
\label{thm:functoriality}
Let $f$ be a fan-preserving map from $\Sigma_1 \subset N_{1,\bR}$ to $\Sigma_2 \subset N_{2,\bR}$.  Suppose that $f$ furthermore satisfies the following conditions:
\begin{enumerate}
\item the inverse image of any cone $\sigma_2 \subset \Sigma_2$ is a union of cones in $\Sigma_1$.  (For instance, if both fans are complete then $f$ automatically satisfies this condition.)
\item $f$ is injective.
\end{enumerate}
Let $u$ and $v$ be as above.  Then
\begin{enumerate}
\item The pullback $u^*:\cQ_{T_2}(X_2) \to \cQ_{T_1}(X_1)$ takes $\ltrp_2$ to $\ltrp_1$.

\item The proper pushforward $v_!:Sh_c(M_{2,\bR}) \to Sh_c(M_{1,\bR})$ takes $\ltr_2$ to $\ltr_1$.

\item The following square of functors commutes up to natural isomorphism:
$$\xymatrix{
\ltrp_2 \ar[r]^{\kappa_2} \ar[d]_{u^*} &\ltr_2 \ar[d]^{v_!} \\
\ltrp_1 \ar[r]_{\kappa_1} &  \ltr_1
}$$

\end{enumerate}
\end{theorem}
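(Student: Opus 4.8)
The plan is to reduce everything to the combinatorial poset $\bGamma(\Sigma,M)_\sfR$ and to compute on generators. First I would make the elementary combinatorial observation underlying (1): if $f(\sigma_1)\subset\sigma_2$ then there is an induced inclusion of rings $\sfR[\sigma_2^\vee\cap M_2]\to\sfR[\sigma_1^\vee\cap M_1]$ via $f^\vee$, and pulling back the module $\sfR[(\chi+\sigma_2^\vee)\cap M_2]$ along $u_{f,\sigma_1,\sigma_2}$ gives, after identifying graded pieces, the module $\sfR[(v(\chi)+\sigma_1^\vee)\cap M_1]$ where $v=v_f:M_{2,\bR}\to M_{1,\bR}$. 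Because the inverse image of $\sigma_2$ is a union of cones in $\Sigma_1$ (hypothesis (1)), the restriction of $u^*\Theta'(\sigma_2,\chi)=u^*j_{\sigma_2 *}\cO_{\sigma_2}(\chi)$ to each chart $X_{\sigma_1}$ with $f(\sigma_1)\subset\sigma_2$ is $\cO_{\sigma_1}(v(\chi))$ (here one needs base change for $j_*$ along the open immersions, which is harmless since these are flat), and gluing these gives exactly $\Theta'(\sigma_1,v(\chi))$ for the \emph{minimal} such $\sigma_1$ — more precisely, $u^*\Theta'(\sigma_2,\chi)$ is a direct sum (over the cones of $\Sigma_1$ mapping into $\sigma_2$, or rather over the maximal such, assembled by a \v{C}ech-type argument just as in Corollary \ref{cor:perfect}) of objects $\Theta'(\sigma_1,v(\chi))$. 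Since each of these lies in $\ltrp_1$, so does $u^*\Theta'(\sigma_2,\chi)$; and since $u^*$ is triangulated and $\ltrp_2$ is generated by the $\Theta'(\sigma_2,\chi)$, statement (1) follows.

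For (2) I would do the parallel computation on the constructible side: $v:M_{2,\bR}\to M_{1,\bR}$ is a surjective linear map (dual to the injection $f$ of hypothesis (2)), and I must compute $v_!$ of the costandard sheaf $\Theta(\sigma_2,\chi)=j_{(\chi+\sigma_2^\vee)^\circ !}\omega_{(\chi+\sigma_2^\vee)^\circ}$. The key point is that $v$ restricted to the open cone $(\chi+\sigma_2^\vee)^\circ$ is a submersion onto its image with contractible (affine, open) fibers; by proper base change and the fact that $v_!$ of the dualizing complex of the fiber direction is again a shifted constant sheaf, one gets that $v_!\Theta(\sigma_2,\chi)$ is, up to the appropriate shift by $\dim\ker v$, the costandard sheaf on the open convex cone $v((\chi+\sigma_2^\vee)^\circ)\subset M_{1,\bR}$. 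Identifying this image: $v(\chi+\sigma_2^\vee)$ is the set of $x\in M_{1,\bR}$ pairing nonnegatively with every element of $f^{-1}(\sigma_2)\cap N_1$, which by hypothesis (1) is $\chi'+\tau^\vee$ where $\tau$ is an appropriate cone in $\Sigma_1$ and $\chi'$ a coset of $\tau^\perp$; so $v_!\Theta(\sigma_2,\chi)\cong\Theta(\tau,\chi')\in\ltr_1$ (or a finite complex of such, matching the \v{C}ech decomposition on the coherent side). This gives (2), and matching the two computations chart-by-chart gives the commuting square of (3) on generators.

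Finally, to upgrade ``commutes on generators'' to ``commutes up to natural isomorphism of functors'' in (3), I would invoke the same formalism used in the proof of Theorem \ref{thm:mainfirst}: both composites $v_!\circ\kappa_2$ and $\kappa_1\circ u^*$ are triangulated dg functors $\ltrp_2\to\ltr_1$, and $\ltrp_2\simeq\Tr(\bGamma(\Sigma_2,M_2)_\sfR)$ is generated by the $\Theta'(\sigma_2,\chi)$; a dg functor out of $\Tr(C)$ is determined up to quasi-isomorphism by its restriction to $C$, so it suffices to exhibit a natural isomorphism on the subcategory $\bGamma(\Sigma_2,M_2)_\sfR$, i.e.\ to check compatibility with the canonical generating morphisms $\Theta'(\sigma,\phi)\to\Theta'(\tau,\psi)$ — and these are sent, on both sides, to the canonical generating morphisms between the corresponding $\Theta$'s because $v_!$ and $u^*$ are both induced functorially by the combinatorial maps on posets. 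The main obstacle I anticipate is precisely this bookkeeping: verifying that the \v{C}ech/gluing decompositions of $u^*\Theta'(\sigma_2,\chi)$ and of $v_!\Theta(\sigma_2,\chi)$ match term-by-term under $\kappa$, including signs and shifts (the $\omega$'s and the $[\dim M_\bR]$ shifts must be tracked carefully through $v_!$), and that hypotheses (1) and (2) are exactly what make the cone $f^{-1}(\sigma_2)$ a genuine union of cones of $\Sigma_1$ with a well-defined dual description; once that combinatorial lemma is in place the rest is formal.
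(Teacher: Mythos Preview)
Your outline for part (1) is essentially the paper's argument: flat base change identifies $u^*\Theta'(\sigma_2,\chi)$ with $j_{u^{-1}(X_{\sigma_2})*}\cO(\chi_1)$, and a \v{C}ech resolution over the maximal cones $B_1,\ldots,B_w$ of $f^{-1}(\sigma_2)$ expresses this as a complex of $\Theta'$'s. (It is a complex, not a direct sum, but you acknowledge this.)

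There is, however, a genuine gap in your treatment of (2) and (3). Your computation of $v_!\Theta(\sigma_2,\chi)$ as the costandard sheaf on $(v(\chi)+v(\sigma_2^\vee))^\circ$ is correct; the fibers of the submersion are open balls. But your identification of this image with $\chi'+\tau^\vee$ for ``an appropriate cone $\tau\in\Sigma_1$'' is wrong. One has $v(\sigma_2^\vee)=(f^{-1}(\sigma_2))^\vee$, and while $f^{-1}(\sigma_2)$ is indeed a convex polyhedral cone in $N_{1,\bR}$, it is in general \emph{not} a cone of $\Sigma_1$ --- hypothesis (1) says only that it is a \emph{union} of such cones. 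So the costandard sheaf $j_{(v(\chi)+v(\sigma_2^\vee))^\circ!}\omega$ is not a $\Theta(\tau,\chi')$ for any $(\tau,\chi')\in\bGamma(\Sigma_1,M_1)$, and it is not obvious that it lies in $\ltr_1$ at all.

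What you are missing is exactly the content of the proof: one must construct a map from this costandard sheaf to the \v{C}ech complex $\bigoplus_{i_0}\Theta(B_{i_0},\chi_1)\to\bigoplus_{i_0<i_1}\Theta(B_{i_0 i_1},\chi_1)\to\cdots$ (this is $\kappa_1 u^*\Theta'(\sigma_2,\chi)$) and prove it is a quasi-isomorphism. The paper does this stalkwise, reducing to showing that a certain simplicial complex $K(a)$ --- the \v{C}ech nerve of the star cover of the part of the fan $f^{-1}(\sigma_2)$ not lying in the half-space $\{a>0\}$ --- is contractible whenever $a\notin (v(\sigma_2^\vee))^\circ$. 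This is the same style of argument as in Theorem \ref{thm:linbun}, and it is the heart of the proof; your phrase ``matching the two computations chart-by-chart'' papers over it, since one side is a single sheaf and the other is a genuine complex. The ``main obstacle'' is not the sign/shift bookkeeping you anticipate, but this topological contractibility statement.
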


\begin{remark}
In the larger categories $\cQ$ and $\Sh_c$, the functors $u^*$ and $v_!$ have right adjoints $u_*$ and $v^!$.  However these adjoint functors do not usually preserve the categories $\ltrp$ and $\ltr$; we therefore do not understand very well how $\kappa$, $u_*$, and $v^!$ interact.
\end{remark}

\begin{proof}
For each cone $\sigma_2 \in \Sigma_2$, we have a cartesian square
$$\xymatrix{
u^{-1}(X_{\sigma_2}) \ar[r] \ar[d] & X_{\sigma_2} \ar[d] \\
X_1 \ar[r]_{u_f} & X_2
}
$$
The vertical arrows are open inclusions, so we may apply the flat base change formula.  Fix $\chi_2 \in M$ and set $\chi_1 = v(\chi_2)$.  The flat base change formula gives us $u_f^* \Theta'(\sigma_2,\chi_2 + \sigma_2^\perp) \cong j_{u^{-1}(X_{\sigma_2})*} \cO(\chi_1)$, where $\cO(\chi_1)$ denotes the structure sheaf equipped with equivariant structure given by $\chi_1$.   Fix a total order on the set of maximal cones $B_1,\ldots,B_w$ contained in $f^{-1}(\sigma_2)$, and put $B_{i_0\cdots i_k} = B_{i_0} \cap \cdots \cap B_{i_k}$.  By applying $j_{u^{-1}(X_{\sigma_2})*}$ to the \v Cech complex for $\cO(\chi_1)$, we get a quasi-isomorphism between $u_f^* \Theta'(\sigma_2,\chi_2)$ and the complex
$$\bigoplus_{i_0} \Theta'(B_{i_0},\chi_1+ B_{i_0}^\perp) \to \bigoplus_{i_0 < i_1} \Theta'(B_{i_0 i_1},\chi_1 + B_{i_0 i_1}^\perp) \to \cdots$$
In particular, this proves the first assertion.

After Theorem \ref{thm:mainfirst}, the second and third assertions follow from the commutativity of the following diagram:
$$\xymatrix{
\ltrp_2 \ar[r]^{\kappa_2} \ar[d]_{u^*} &\Sh_c(M_{2,\bR}) \ar[d]^{v_!} \\
\ltrp_1 \ar[r]_{\kappa_1} &  \Sh_c(M_{1,\bR})
}$$

To construct a natural quasi-isomorphism
$\iota:v_! \circ \kappa_2 \stackrel{\sim}{\to} \kappa_1 \circ u^*$.
it suffices to give maps
$$\iota_{\sigma_2,\chi_2}:v_{!} \kappa_2(\Theta'(\sigma_2,\chi_2+ \sigma_2^\perp)) \to \kappa_1(u^*(\Theta'(\sigma_2,\chi_2 + \sigma_2^\perp)))$$
with the following properties:
\begin{itemize}
\item Each map $\iota$ is a quasi-isomorphism.
\item Whenever $(\sigma_2,\chi_2 + \sigma_2^\perp) \leq (\tau_2,\psi_2 + \tau_2^\perp)$ in $\bGamma(\Sigma,M)$, the following square commutes:
$$
\xymatrix{
v_! \kappa_2(\Theta'(\sigma_2,\chi_2 + \sigma_2^\perp)) \ar[r] \ar[d] & v_! \kappa_2(\Theta'(\tau_2,\psi_2 + \tau_2^\perp)) \ar[d]\\
\kappa_1(u^* \Theta'(\sigma_2,\chi_2 + \sigma_2^\perp)) \ar[r] & \kappa_1(u^*\Theta'(\tau_2,\psi_2 + \tau_2^\perp))
}
$$
where the vertical arrows are given by $\iota$ and the horizontal arrows are induced by the canonical morphism $\Theta'(\sigma_2,\chi_2+ \sigma_2^\perp) \to \Theta'(\tau_2,\psi_2 + \tau_2^\perp)$.

\end{itemize}

Let us compute $v_! \kappa_2(\Theta'(\sigma_2,\chi_2 + \sigma_2^\perp))$.  In fact we have a quasi-isomorphism
$$v_! \kappa_2(\Theta'(\sigma_2,\chi_2 + \sigma_2^\perp)) \stackrel{\sim}{\to} j_{(v(\chi_2) + v(\sigma_2^\vee))^\circ!} \omega_{(\chi_2 + \sigma_2^\vee)^\circ}$$
To see this, first note that whenever $m:U \to V$ is a submersion, there is a natural map $m_! \omega_U \to \omega_V$, given by composing the isomorphism $m^! \omega_V \cong \omega_U$ with the adjunction map.  By definition we have $\kappa_2(\Theta'(\sigma_2,\chi_2 + \sigma_2^\perp)) = \Theta(\sigma_2,\chi_2 + \sigma_2^\perp) = j_{(\chi_2 + \sigma_2^\vee)^\circ} \omega$.  Since $f$ is injective, $v_f:M_{2,\bR} \to M_{1,\bR}$ is surjective, and its restriction to $(\chi_2 + \sigma_2^\vee)^\circ$ is a submersion whose fibers are open balls.  It follows that the canonical map
$$v_{f!} \Theta(\sigma_2,\chi_2 + \sigma_2^\perp) \to j_{(v(\chi_2) + v(\sigma_2^\vee))^\circ!} \omega_{(\chi_2 + \sigma_2^\vee)^\circ}$$
is a quasi-isomorphism.

Now let us compute $\kappa_1 u^* \Theta'(\sigma_2,\chi_2 + \sigma_2^\perp)$.  Put $\chi_1 = v(\chi_2)$.  We have already seen that $u^*\Theta'(\sigma_2,\chi_2)$ has the \v Cech resolution
$$\bigoplus_{i_0} \Theta'(B_{i_0},\chi_1 + B_{i_0}^\perp) \to \bigoplus_{i_0 < i_1} \Theta'(B_{i_0 i_1},\chi_1 + B_{i_0 i_1}^\perp) \to \cdots$$
After applying $\kappa_1$ we have
$$\bigoplus_{i_0} \Theta(B_{i_0},\chi_1 + B_{i_0}^\perp) \to \bigoplus_{i_0 < i_1} \Theta(B_{i_0 i_1},\chi_1 + B_{i_0 i_1}^\perp) \to \cdots.
$$
Now we define the map $\iota:v_! \kappa_2(\Theta'(\sigma_2,\chi_2 + \sigma_2^\perp)) \to \kappa_1 u^* \Theta'(\sigma_2,\chi_2 + \sigma_2^\perp)$ to be the morphism of complexes:
$$
\begin{CD}
j_{(v(\chi_2) + v(\sigma_2^\vee))^\circ !}\omega   @>>>   0 @>>>  \cdots \\
@VVV @VVV\\
\bigoplus_{i_0} \Theta(B_{i_0},\chi_1 + B_{i_0}^\perp) @>>> \bigoplus_{i_0 < i_1} \Theta(B_{i_0 i_1},\chi_1 + B_{i_0 i_1}^\perp) @>>> \cdots
\end{CD}
$$
where the nonzero vertical arrow is the direct sum of the maps induced by the inclusion of open sets
$$(\chi_1 + v(\sigma_2^\vee))^\circ \subset \chi_1 + B_{i_0}^\circ$$
This map has the desired naturality property.  It remains to show that it is a quasi-isomorphism.
We do this at the level of stalks, following the proof of Theorem \ref{thm:linbun}.  We have to show that for each point $a \in M_{1,\bR}$, the cohomology of the chain complex
$$
\bigoplus_{i_0} (\Theta(B_{i_0},\chi_1 + B_{i_0}^\perp))_a \to \bigoplus_{i_0 < i_1} (\Theta(B_{i_0 i_1},\chi_1 + B_{i_0 i_1}^\perp))_a \to \cdots
$$
is a one-dimensional vector space concentrated in degree $-\dim(M_{1,\bR})$, and zero otherwise.  By replacing $a$ by $a - \chi_1$ if necessary, we may assume that $\chi_1 = 0$.

Let $\Delta$ be the combinatorial simplex on the numbers $\{1,\ldots,w\}$ and let $K(a)$ be the subsimplicial complex given by
$$
\{i_0,\ldots,i_k\} \in K(a) \text{ if and only if } a \notin (B_{i_0,\ldots,i_k}^\vee)^\circ
$$
The cochain complex of stalks at $a$ is isomorphic to the relative simplicial complex of the pair $(\Delta,K(a))$, shifted by $\dim(M_{1,\bR})$.  When $a \in (v(\sigma_2^\vee))^\circ$, $K(a)$ is empty so we have $H^0 = \sfR$ and $H^i = 0 $ for $i > 0 $ as required.

To complete the proof we have to show that the geometric realization $|K(a)|$ is contractible when $a$ is not in $(v(\sigma_2^\vee))^\circ$.  Let $E$ be the union of the interiors of those cones in $f^{-1}(\sigma_2)$ that do not lie entirely in $a^{-1}(\bR_{>0})$. (Here we are regarding $a$ as a linear function on $N_\bR$.)
Note that $E$ is convex.  We have $a \notin (B_{i_0 \cdots i_k}^\vee)^\circ$ if and only if $B_{i_0 \cdots i_k} \subset E$.  The contractibility of $|K(a)|$ now follows from the contractibility of $S \cap E$, where $S$ is the unit sphere in $E$.  Indeed, $K(a)$ is the \v Cech nerve of the open cover of $S \cap E$ given by stars---each simplex $\{i_0,\ldots,i_k\}$ is associated to the open set $S \cap E \cap \mathrm{star}(B_{i_0,\ldots,i_k})$, where $\mathrm{star}(\tau)$ denotes the union of the interiors of cones containing $\tau$ as a face.

\end{proof}

\begin{example}
\label{ex:frobenius}
For any fan $\Sigma \subset N_\bR$, and for each positive integer $p$, the multiplication-by-$p$ map $N \to N$ satisfies the hypotheses of Theorem \ref{thm:functoriality}.  The corresponding map $u_p:X_\Sigma \to X_\Sigma$ is closely related to the Frobenius map when $p$ is prime and $X$ is defined over a field of characteristic $p$.  The map $v_p:M_\bR \to M_\bR$ is multiplication-by-$p$ again.  At the level of $K$-theory, this connection between the Frobenius map and dilation by $p$ is well-known, and plays a crucial role in the results of \cite{M}.
\end{example}

\begin{example}
\label{ex:blowup}
If $\Sigma_1$ and $\Sigma_2$ are fans in the same vector space $N_\bR$, and $\Sigma_1$ is a refinement of $\Sigma_2$, then the identity map $N_\bR \stackrel{=}{\to} N_\bR$ satisfies the hypotheses of Theorem \ref{thm:functoriality}.
The corresponding map $u:X_1 \to X_2$ is birational---toric resolutions of singularities arise in this way.  The corresponding map $v:M_\bR \to M_\bR$ is also the identity map; the pushforward functor $v_!$ is just an inclusion of categories.  Thus by
Theorem \ref{thm:functoriality} $\kappa \circ u^* = \kappa$.
\end{example}

\begin{example}
\label{ex:diagonal}
The diagonal map $N \to N \oplus N$ satisfies the hypotheses of Theorem \ref{thm:functoriality}.
The corresponding map $u:X \to X \times X$ is also the diagonal map, and the corresponding map $v:M_\bR \times M_\bR \to M_\bR$ is the addition map.
\end{example}

Recall that the \emph{convolution} of two sheaves $F$ and $G$ on a vector space $M_\bR$ is given by the formula $F \star G = v_!(F \boxtimes G)$, where $v$ denotes the addition map as in the example.  Convolution defines a monoidal structure on $\Sh(M_\bR)$ and various subcategories, including $\ltr$ and $\Sh_{cc}(M_\bR)$.  From Example \ref{ex:diagonal} we see the following

\begin{corollary}
\label{cor:tensor}
The functor $h\ltrp \stackrel{\sim}{\to} h\ltr$ induced by $\kappa$ has a natural monoidal structure which intertwines the tensor product on $h\ltrp$ with the convolution product on $h\ltr$.
\end{corollary}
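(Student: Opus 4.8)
The plan is to obtain the monoidal structure directly from Theorem~\ref{thm:functoriality}, specialized to the diagonal map $f\colon N \to N \oplus N$ of Example~\ref{ex:diagonal}. First I would record the setup. Taking the source fan to be $\Sigma \times \Sigma$ in $N_\bR \times N_\bR$ and the target fan to be $\Sigma$ in $N_\bR$, the diagonal is injective and the preimage of a product cone $\sigma_1 \times \sigma_2$ under $N_\bR \to N_\bR \times N_\bR$ is $\sigma_1 \cap \sigma_2$, a single cone of $\Sigma$; so both hypotheses of Theorem~\ref{thm:functoriality} hold. The associated maps are the diagonal $u\colon X \to X \times X$ and the addition map $v\colon M_\bR \times M_\bR \to M_\bR$, and by definition the tensor product on $\cQ_T(X)$ is $A \otimes B \cong u^*(A \boxtimes B)$ while the convolution on $\Sh_c(M_\bR)$ is $F \star G = v_!(F \boxtimes G)$. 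I would also note that $\Theta'$ is compatible with external products: since $X_{\sigma_1} \times X_{\sigma_2} = X_{\sigma_1 \times \sigma_2}$ and $\cO_{\sigma_1}(\chi_1) \boxtimes \cO_{\sigma_2}(\chi_2) = \cO_{\sigma_1 \times \sigma_2}(\chi_1,\chi_2)$, with pushforward along an open inclusion commuting with $\boxtimes$, one gets $\Theta'(\sigma_1,\chi_1) \boxtimes \Theta'(\sigma_2,\chi_2) \cong \Theta'(\sigma_1 \times \sigma_2,(\chi_1,\chi_2))$; hence $A \boxtimes B$ lies in the $\Theta'$-generated category for $X \times X$ whenever $A, B$ do, and so by Theorem~\ref{thm:functoriality}(1) the category $\ltrp$ is closed under $\otimes$. (The text above already records that $\ltr$ is closed under $\star$.)

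Next I would verify the intertwining on the generating objects. For $A = \Theta'(\sigma_1,\chi_1)$ and $B = \Theta'(\sigma_2,\chi_2)$, on the coherent side $A \otimes B \cong u^*\Theta'(\sigma_1 \times \sigma_2,(\chi_1,\chi_2))$; since $u^{-1}(X_{\sigma_1 \times \sigma_2}) = X_{\sigma_1} \cap X_{\sigma_2} = X_{\sigma_1 \cap \sigma_2}$ is affine, flat base change (exactly as in the proof of Theorem~\ref{thm:functoriality}, with no \v{C}ech term needed) gives $A \otimes B \cong \Theta'(\sigma_1 \cap \sigma_2,\, \chi_1 + \chi_2 + (\sigma_1 \cap \sigma_2)^\perp)$, hence $\kappa(A \otimes B) \cong \Theta(\sigma_1 \cap \sigma_2,\, \chi_1 + \chi_2 + (\sigma_1 \cap \sigma_2)^\perp)$. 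On the constructible side, $\kappa(A) \star \kappa(B) = v_!\big(\Theta(\sigma_1,\chi_1) \boxtimes \Theta(\sigma_2,\chi_2)\big)$, and the restriction of the linear surjection $v$ to $(\chi_1 + \sigma_1^\vee)^\circ \times (\chi_2 + \sigma_2^\vee)^\circ$ has contractible fibers and image $(\chi_1 + \chi_2 + (\sigma_1 \cap \sigma_2)^\vee)^\circ$ (using $\sigma_1^\vee + \sigma_2^\vee = (\sigma_1 \cap \sigma_2)^\vee$), so this pushforward is again the costandard sheaf $\Theta(\sigma_1 \cap \sigma_2,\, \chi_1 + \chi_2 + (\sigma_1 \cap \sigma_2)^\perp)$ --- the same object. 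Thus $\kappa$ intertwines $\otimes$ and $\star$ on generators, and, chasing Theorem~\ref{thm:functoriality}(3) (which supplies the natural isomorphism $\kappa \circ u^* \cong v_! \circ \kappa$) together with the natural map $\kappa(A) \boxtimes \kappa(B) \to \kappa(A \boxtimes B)$ induced by the open inclusion $(\chi_1 + \sigma_1^\vee)^\circ \times (\chi_2 + \sigma_2^\vee)^\circ \subset ((\chi_1,\chi_2) + (\sigma_1 \times \sigma_2)^\vee)^\circ$ of underlying sets, one gets the isomorphism $\kappa(A \otimes B) \cong \kappa(A) \star \kappa(B)$ naturally in the generators.

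To finish I would globalize and check coherence. Because $h\ltrp$ is generated as a triangulated category by the $\Theta'(\sigma,\chi)$ and $\otimes$, $\star$ are exact in each variable, the natural isomorphism extends from generators to all of $h\ltrp$. The monoidal unit $\cO_X$ lies in $\Perf_T(X) \subset \ltrp$ by Corollary~\ref{cor:perfect}, and a direct stalk computation (as in the proof of Theorem~\ref{thm:linbun}, applied to the \v{C}ech complex of $\cO_X$) identifies $\kappa(\cO_X)$ with the skyscraper $\sfR_{\{0\}}$ at the origin, which is the unit for $\star$. The associativity constraint is obtained by running Theorem~\ref{thm:functoriality} on the two factorizations of the triple diagonal $N \to N \oplus N \oplus N$ and invoking the associativity of $\boxtimes$; this simultaneously checks that the structure isomorphisms are mutually compatible, and symmetry (if desired) follows from symmetry of $\boxtimes$.

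I expect the two genuinely delicate points to be: first, the convex-geometric bookkeeping in computing $v_!$ of external products of the costandard sheaves $\Theta$ --- identifying the image and fibers of the addition map on the relevant generalized-interior sets and seeing that one recovers exactly $\Theta(\sigma_1 \cap \sigma_2,\cdot)$; and second, promoting the object-level matching on generators to a genuinely natural and coherent monoidal structure on the equivalence of homotopy categories. Everything else is formal manipulation of Theorem~\ref{thm:functoriality}.
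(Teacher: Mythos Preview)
Your approach is correct and essentially the same as the paper's: both apply Theorem~\ref{thm:functoriality} to the diagonal (Example~\ref{ex:diagonal}) and then handle associativity via the triple diagonal. The paper's write-up is slightly leaner: rather than computing $\Theta'(\sigma_1,\chi_1)\otimes\Theta'(\sigma_2,\chi_2)$ and $\Theta(\sigma_1,\chi_1)\star\Theta(\sigma_2,\chi_2)$ explicitly, it observes that once you have $\kappa\circ u^*\cong v_!\circ\kappa_{X\times X}$ from functoriality, the only remaining step is to check $\kappa_{X\times X}(A\boxtimes B)\cong\kappa(A)\boxtimes\kappa(B)$, and on generators both sides are literally $\Theta(\sigma_1\times\sigma_2,(\chi_1,\chi_2))$ (your ``open inclusion'' $(\chi_1+\sigma_1^\vee)^\circ\times(\chi_2+\sigma_2^\vee)^\circ\subset((\chi_1,\chi_2)+(\sigma_1\times\sigma_2)^\vee)^\circ$ is in fact an equality, since the interior of a product is the product of interiors). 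Your direct computation of $\Theta\star\Theta$ via contractible fibers of $v$ is correct but already subsumed by the proof of Theorem~\ref{thm:functoriality} in the special case where the preimage of the cone is a single cone.
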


We are grateful to Zhiwei Yun for suggesting this statement to us.

\begin{proof}
We have a natural isomorphism $\cF \otimes \cG \cong u^* (\cF \boxtimes \cG)$, and the convolution product $\kappa(\cF) \star \kappa(\cG) = v_!(\kappa(\cF) \boxtimes \kappa(\cG))$ by definition.  Thus by Theorem \ref{thm:functoriality} (functoriality) to construct a natural isomorphism $\kappa(\cF \otimes \cG) \cong \kappa(\cF) \star \kappa(\cG)$, it suffices to
construct a natural isomorphism
$$\kappa(\cF \boxtimes \cG) \cong \kappa(\cF) \boxtimes \kappa(\cG)$$
We may assume that $\cF$ is of the form $\Theta'(\sigma,\chi)$ and $\cG$ is of the form $\Theta'(\tau,\psi)$.  But in that case it is easy to construct natural isomorphisms $\cF \boxtimes \cG \cong \Theta'(\sigma \times \tau, (\chi,\psi))$ and $\kappa(\cF) \boxtimes \kappa(\cG) \cong \Theta(\sigma \times \tau, (\chi,\psi))$.  To verify that $\kappa$ commutes with the associativity isomorphisms $(\cF \otimes \cG) \otimes \cH$ we may apply Theorem \ref{thm:functoriality} to the isomorphism $(X \times X) \times X \cong X \times (X \times X)$. 
\end{proof}

\begin{remark}
Recall that monoidal structures on dg categories have a hierarchy of commutativity constraints $E_2,E_3,\ldots,E_\infty$.  By applying
Theorem \ref{thm:functoriality} (functoriality) to more general diagonal maps such as $X \to X \times X \times X$ and to the permutation maps
$X \times \stackrel{n}{\cdots} \times X \to X \times \stackrel{n}{\cdots} \times X$,
it can be shown that $\kappa$ is an equivalence of $E_\infty$-tensor structures.  We do not pursue this here.
\end{remark}

\section{Microlocal theory of polyhedral sheaves}
\label{sec:microlocal}

There are three staple operations of microlocal sheaf theory that we will use a lot in the next sections.  They are specialization, microlocalization, and the Fourier-Sato transform.  The general theory of these operations is intricate, and in many interesting cases there is still no satisfactory method of computing them.  But the microlocal theory of \emph{polyhedral} sheaves is simpler.  In this section we give an exposition of microlocal sheaf theory in this simple setting.  Our exposition is not self-contained, but we hope that it will give the reader unfamiliar with microlocal sheaf theory a feel for the basics.

Fix a real vector space $M_\bR$.  We will call a sheaf on $M_\bR$ \emph{polyhedral} if it is constructible with respect to a piecewise-linear stratification of $M_\bR$.  Write $\Sh_{c,\pol}(M_\bR) \subset \Sh_c(M_\bR)$ for the full subcategory of polyhedral sheaves on $M_\bR$.  In this paper almost all constructible sheaves that appear are polyhedral.

We say that a polyhedral sheaf $F$ has \emph{finite type} if it is constructible with respect to a piecewise linear stratification of $M_\bR$ with finitely many strata.  (Note that necessarily some of these strata will be unbounded.  In general we do not require finite type sheaves to have compact support.)  Polyhedral sheaves of finite type form a full triangulated subcategory of $\Sh_{c,\pol}(M_\bR)$, and the class of standard sheaves on finite type polyhedra (i.e. sets cut out by a finite number of linear equations and inequalities) generates this category.  The category is also generated by the class of costandard sheaves on the interiors of finite type polyhedra.

\subsection{Specialization}

For any manifold $M$ and any subset $Y \subset M$, there is a functor $\nu_Y$, called \emph{specialization}, from sheaves on $M$ to sheaves on the normal cone to $Y$ in $M$.  The functor $\nu_Y$ takes values in the category of \emph{conical} sheaves, that is, the category $\Sh_{c,\bR_{>0}}(T_Y M)$ of sheaves that are equivariant for the scaling action of the multiplicative group $\bR_{>0}$ on $T_Y M$.  As $\bR_{>0}$ is contractible, $\Sh_{c,\bR_{>0}}$ can be identified with the full subcategory of $\Sh_c$ whose objects are sheaves that are constant on the orbits of $\bR_{>0}$.  

We are especially interested in $\nu_x(F)$ when $F$ is polyhedral and $x$ is a point of $M_\bR$.  In that case we shall refer to $\nu_x$ as specialization to the tangent space rather than normal cone. The following proposition can function as a definition of $\nu_x$ for the reader unfamiliar with this notion.

\begin{proposition}
\label{prop:nu}
The functor
$$\nu_x:\Sh_{c,\pol}(M_\bR) \to \Sh_{c,\bR_{>0}}(T_x M_\bR)$$
is the unique functor with the following property: for every polyhedral sheaf $F$, there is an open neighborhood $U$ of $x$ such that $\nu_x(F) \vert_U \cong F \vert_U$ under the identification of $U$ with an open neighborhood of $0 \in T_x M_\bR$.
\end{proposition}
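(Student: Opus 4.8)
The plan is to verify separately that the Kashiwara--Schapira specialization functor $\nu_x$ (recalled from \cite{KS}) enjoys the stated local-agreement property, and that this property characterizes it among functors $\Sh_{c,\pol}(M_\bR)\to\Sh_{c,\bR_{>0}}(T_xM_\bR)$. Throughout we use that, by general microlocal theory, $\nu_x(F)$ is automatically conic. The characterization is the easy half: if $G$ is any functor with the property that every polyhedral $F$ admits a neighborhood $U$ of $x$ with $G(F)|_U\cong F|_U$, then, $G(F)$ being conic, it is determined up to isomorphism by its restriction to an arbitrarily small neighborhood $V$ of $0$ --- the scaling isomorphisms $\lambda^*(G(F)|_{\lambda V})\cong G(F)|_V$ are compatible and $T_xM_\bR=\bigcup_{\lambda>0}\lambda V$, so $G(F)$ is recovered by gluing. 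Since $\nu_x(F)$ is also conic and agrees with $F$ near $x$, comparing on the intersection of the relevant neighborhoods gives $G(F)\cong\nu_x(F)$, and canonicity of the gluing promotes this to a natural isomorphism $G\cong\nu_x$.

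For the existence half, first treat the case that $F$ is conic for the scaling action centered at $x$ (translating, assume $x=0$). Since $M_\bR$ is a vector space, the normal deformation of $M_\bR$ along $\{0\}$ may be taken to be $M_\bR\times\bR$ with $p(v,t)=tv$ and $t$ the second coordinate, so that with $\Omega=M_\bR\times\bR_{>0}$, $j\colon\Omega\hookrightarrow M_\bR\times\bR$ the open inclusion and $s\colon T_0M_\bR=M_\bR\times\{0\}\hookrightarrow M_\bR\times\bR$ the closed inclusion one has $\nu_0(F)=s^{-1}Rj_*(p|_\Omega)^{-1}F$. Now $p|_\Omega$ is the composite of the coordinate swap with the scaling action $\alpha\colon\bR_{>0}\times M_\bR\to M_\bR$, so the equivariance isomorphism of a conic $F$ gives $(p|_\Omega)^{-1}F\cong\mathrm{pr}_1^{-1}(F|_{M_\bR})$ on $\Omega$. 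A direct computation of stalks (using that $R\Gamma$ of a contractible interval is trivial, equivalently that $Rj_*\sfR_\Omega\cong\sfR_{M_\bR\times\bR_{\ge0}}$) then gives $Rj_*(p|_\Omega)^{-1}F\cong(\overline{\mathrm{pr}}_1^{-1}F)|_{M_\bR\times\bR_{\ge0}}$, where $\overline{\mathrm{pr}}_1\colon M_\bR\times\bR\to M_\bR$; applying $s^{-1}$ and noting $\overline{\mathrm{pr}}_1\circ s=\mathrm{id}$ yields $\nu_0(F)\cong F$. Every step is natural, so $\nu_x$ restricted to conic polyhedral sheaves is naturally isomorphic to the identity; in particular the desired property holds there with $U=M_\bR$.

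For a general polyhedral $F$, two reductions finish the argument. First, $\nu_x(F)$ depends only on the germ of $F$ at $x$: being conic it is determined by its restriction to $B(0,1)$, and that restriction is computed from $(p|_\Omega)^{-1}F$ on $B(0,1)\times(0,\delta)$ for arbitrarily small $\delta$, hence from $F$ on $p(B(0,1)\times(0,\delta))\subset B(x,\delta)$; so $F|_{B(x,\epsilon_0)}\cong F'|_{B(x,\epsilon_0)}$ forces $\nu_x(F)|_{B(0,R)}\cong\nu_x(F')|_{B(0,R)}$ for all $R\le\epsilon_0$. Second, $F$ agrees near $x$ with a conic polyhedral sheaf: this is where the piecewise-linear hypothesis enters. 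If $F$ is constructible for the PL stratification $\cS$, then on a small ball $B(x,\epsilon)$ it is constructible for the conical stratification $\cS_x$ whose strata are $x$ together with the tangent cones at $x$ of the strata of $\cS$ through $x$; every $\cS_x$-constructible sheaf is conic (each $\cS_x$-stratum is a union of scaling orbits, on each of which a locally constant sheaf is constant); and the radial deformation retraction of $M_\bR$ onto $\overline{B(x,\epsilon)}$, which moves each point along the ray through $x$, is $\cS_x$-stratified, so by stratified homotopy invariance for constructible sheaves restriction identifies $\cS_x$-constructible sheaves on $M_\bR$ with those on $B(x,\epsilon)$. Hence $F|_{B(x,\epsilon)}$ is the restriction of a conic polyhedral sheaf $\hat F$ on $M_\bR$, and combining this with the conic case and the germ-dependence gives $\nu_x(F)|_{B(0,R)}\cong\nu_x(\hat F)|_{B(0,R)}\cong\hat F|_{B(0,R)}\cong F|_{B(x,R)}$ for $R\le\epsilon$ small, which is the claim with $U=B(x,R)$. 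The step I expect to require the most care is this last one --- producing the conic model $\hat F$ --- since it rests on stratified homotopy invariance of the constructible derived category applied to the radial retraction; everything else is formal manipulation of the normal deformation together with the elementary fact that a conic sheaf is determined near the fixed point.
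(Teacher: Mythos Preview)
The paper does not prove this proposition at all; it is stated without proof, with the remark that it ``can function as a definition of $\nu_x$ for the reader unfamiliar with this notion.''  So you are supplying an argument where the paper offers none, and there is nothing to compare against.

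Your proof is essentially correct.  Two comments.

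On uniqueness: you correctly argue that a conic sheaf is determined by its germ at $0$, so any functor $G$ with the stated property has $G(F)\cong\nu_x(F)$ for each $F$.  But the sentence ``canonicity of the gluing promotes this to a natural isomorphism $G\cong\nu_x$'' is not justified.  The hypothesis only asserts that local isomorphisms $G(F)|_U\cong F|_U$ \emph{exist}; they are not assumed natural in $F$, so there is no canonical choice to feed into the gluing, and hence no reason the resulting objectwise isomorphisms assemble into a natural transformation.  What you have actually proved is objectwise uniqueness, which is almost certainly all the paper intends by ``unique functor'' here---the statement is informal.

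On existence: your two reductions (locality of $\nu_x$ in the germ of $F$, and the fact that a polyhedral sheaf is locally conic) are exactly the right structure, and the computation of $\nu_0$ on conic sheaves via the explicit normal deformation is clean.  The step you flag as delicate---producing the conic model $\hat F$ via stratified homotopy invariance of the radial retraction---is fine, though one can also bypass it: restriction gives an equivalence between conic sheaves on $T_xM_\bR$ and conic sheaves on any open ball about $0$ (both are equivalent, via the quotient by $\bR_{>0}$, to sheaves on the link together with attaching data at the cone point), so $F|_{B(x,\epsilon)}$, being $\cS_x$-constructible and hence conic on the ball, extends uniquely to a conic $\hat F$ without invoking a homotopy.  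Either route uses the PL hypothesis exactly where you say: to guarantee that the stratification is locally a cone at $x$.
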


Since $\nu_x F$ encodes the first-order behavior of $F$ at $x$, the proposition is a way of saying that the first order behavior of a polyhedral $F$ determines $F$ in a neighborhood of $Y$.  Of course the proposition is false for general constructible sheaves, e.g. for $\nu_0$ of the constant sheaf on the cusp $\{(x,y) \in \bR^2 \mid y^2 = x^3\}$.

\subsection{Fourier-Sato transform}

If $V$ is a vector space and $F$ is a conical sheaf on $V$, there is a sheaf $\FT(F)$ on $V^*$ whose stalks $\FT(F)_\xi$ fit into an exact triangle
$$\FT(F)_\xi \to \Gamma(V;F) \to \Gamma(\{v \in V \mid \xi(v) < -1\};F\vert_{\{\xi(v) < -1\}}) \to$$
(This formula is valid when $\xi \neq 0$.  When $\xi = 0$ we should replace the third term by $\Gamma(V - \{0\};F\vert_{V - \{0\}}$.)
$\FT = \FT_V$ is a functor $\Sh_{\bR_{>0}}(V) \to \Sh_{\bR_{>0}}(V^*)$ called the \emph{Fourier-Sato} transform.  (In the language of \cite{KS}, we have $\FT(F) = F^\wedge$.)  


The following proposition determines the behavior of $\FT$ on polyhedral sheaves, in particular it shows that $\FT$ carries polyhedral sheaves to polyhedral sheaves.

\begin{proposition}
\label{prop:FT}
Let $V$ and $V^*$ be dual vector spaces, and let $\sigma \subset V$ and $\tau \subset V^*$ be dual convex cones.  Let $\sigma^\circ$ and $\tau^\circ$ denote the interiors, and let $s:\sigma^\circ \hookrightarrow V$ and $t:\tau^\circ\hookrightarrow V^*$ be the inclusion maps.  Then we have natural isomorphisms
$$
\begin{array}{c}
\FT_V(s_! \ori_{\sigma^\circ}[\dim(\sigma)]) \cong -t_* \sfR \\
\FT_V(s_* \sfR) \cong t_! \sfR
\end{array}
$$
\end{proposition}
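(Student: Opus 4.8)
The plan is to compute directly the stalks of $\FT_V$ on the two sheaves, using the exact triangle for $\FT(F)_\xi$ recalled just before the proposition, and to match the resulting stalk patterns with those of $-t_*\sfR$ and of $t_!\sfR$. (One could instead quote the Kashiwara--Schapira computation of the Fourier--Sato transform of the constant sheaf on a closed convex cone, \cite{KS}, and reconcile conventions; I will describe the direct route.) Since $-t_*\sfR$ and $t_!\sfR$ are polyhedral and conic, it suffices to produce a comparison morphism to or from $\FT_V(-)$ --- obtained from the functoriality of $\FT$ under the canonical maps among the $\Theta$'s, just as in the proof of Theorem~\ref{thm:linbun} --- and to check that it is a quasi-isomorphism on every stalk; this also establishes the assertion in the text that $\FT$ preserves polyhedrality.

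\emph{Reduction.} Let $L=\sigma\cap(-\sigma)$ be the lineality space and split $V=V_1\oplus V_2\oplus V_3$ with $V_1=L$ and $V_1\oplus V_2=\mathrm{span}(\sigma)$, so that $\sigma=V_1\oplus\sigma_0$ with $\sigma_0\subset V_2$ full-dimensional and strictly convex; dually $\tau=\sigma^\vee=0\oplus\sigma_0^\vee\oplus V_3^*$ with $\sigma_0^\vee$ full-dimensional and strictly convex. The Fourier--Sato transform commutes with external products of sheaves on direct sums of vector spaces (\cite{KS}), and each of the four sheaves occurring in the proposition is an external product along this splitting; so it is enough to treat the three factors $\sigma=V_1$, $\sigma=0$, and $\sigma=\sigma_0$ separately. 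In the first factor ($\sigma=V$, $\tau=0$) the triangle gives $\FT(\sfR_V)_\xi=0$ for $\xi\neq0$ (an affine half-space is contractible, so $R\Gamma(V;\sfR)\to R\Gamma(\{\xi<-1\};\sfR)$ is a quasi-isomorphism), while $\FT(\sfR_V)_0$ is the fiber of $R\Gamma(V;\sfR)\to R\Gamma(V\setminus0;\sfR)=R\Gamma(S^{\dim V-1};\sfR)$, which after the shift by $[\dim\sigma]$ is a rank-one module concentrated at the origin --- matching both formulas here; the factor $\sigma=0$ is dual and immediate, $\FT(\sfR_0)\cong\sfR_{V^*}$.

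\emph{The essential factor.} Now let $\sigma\subset V$ be full-dimensional and strictly convex, $\tau=\sigma^\vee$. For the second isomorphism, $s_*\sfR_{\sigma^\circ}\cong\sfR_\sigma$ by convexity, and the triangle presents $\FT(\sfR_\sigma)_\xi$ as the fiber of $R\Gamma(\sigma;\sfR)\to R\Gamma(\sigma\cap\{\langle\xi,\cdot\rangle<-1\};\sfR)$. Since $\sigma$ is convex it is contractible; and $\sigma\cap\{\langle\xi,\cdot\rangle<-1\}$ is empty exactly when $\langle\xi,\cdot\rangle\geq0$ on $\sigma$, i.e. when $\xi\in\tau$, and otherwise is a nonempty intersection of convex sets and hence contractible, in which case the restriction map is a quasi-isomorphism; this pins down the stalk everywhere except along $\partial\tau$. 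On $\partial\tau$ one repeats the visibility argument from the proof of Theorem~\ref{thm:linbun}: the relevant complement in a half-space is a disjoint union of convex pieces (equivalently, the \v Cech nerve of a star cover of a convex projection), hence contractible, which determines the boundary behavior and identifies $\FT(\sfR_\sigma)$ with the correct extension from $\tau^\circ$. The first isomorphism runs identically with $\sfR_\sigma$ replaced by the costandard sheaf $s_!\omega_{\sigma^\circ}$: the sections $R\Gamma(V;s_!\omega_{\sigma^\circ})$ and the half-space analogues become relative cohomology groups $R\Gamma(W,W\setminus\sigma^\circ;\sfR)$, which reduce by the long exact sequence and the same convexity inputs to the contractibility statements above --- the $[\dim\sigma]$-shift absorbing $\omega$, and the antipode appearing because passing from $\sfR$ to $\omega$ exchanges the roles of $\{\langle\xi,\cdot\rangle<-1\}$ and its complement. (Alternatively, given either isomorphism the other follows by Verdier duality and the standard compatibility $\cD\circ\FT\cong(\text{antipode})\circ\FT\circ\cD$ up to shift, via $\cD(s_!\omega_{\sigma^\circ})\cong s_*\sfR_{\sigma^\circ}$.)

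The geometric content is entirely elementary --- every auxiliary set is empty or a nonempty intersection of convex sets, hence contractible, plus the single visibility argument already used for Theorem~\ref{thm:linbun}. The part I expect to require the most care is the bookkeeping: getting the shift ($[\dim\sigma]$, not $[\dim V]$), the open-versus-closed behavior along $\partial\tau$ (i.e. $t_!$ versus $t_*$), and the antipode $-$ all correct, and carrying this out naturally in the pair $(\sigma,\tau)$ so that the comparison isomorphisms intertwine the canonical maps among the $\Theta(\sigma,\chi)$. Writing $\sigma$ throughout as lineality space $\oplus$ strictly convex cone, as in the reduction, is what keeps the $\partial\tau$ analysis manageable.
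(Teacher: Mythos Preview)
The paper's proof is a single sentence: it simply cites Lemma~3.7.10 of \cite{KS}. You explicitly mention this option and then choose a direct stalkwise computation instead, so the two routes are genuinely different.

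Your reduction via external products along $V=V_1\oplus V_2\oplus V_3$ is sound and pleasant, and the generic stalk computations (for $\xi\in\tau^\circ$ and for $\xi\notin\tau$) using the displayed triangle are correct. The real issue is on $\partial\tau\setminus\{0\}$, and here your argument has a gap rather than merely missing bookkeeping. For $\xi\in\partial\tau\setminus\{0\}$ one still has $\langle\xi,\cdot\rangle\geq 0$ on all of $\sigma$, so $\sigma\cap\{\langle\xi,\cdot\rangle<-1\}=\varnothing$ and the triangle as written in the text yields stalk $\sfR$, not $0$; try $V=\bR^2$, $\sigma=\bR_{\geq0}^2$, $\xi=(1,0)$. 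In other words, the heuristic stalk triangle recorded in the text is not sharp enough to detect the $t_!$ versus $t_*$ distinction along $\partial\tau$---the actual Fourier--Sato stalk is $R\Gamma_c$ of $F$ over the closed half-space $\{\langle\xi,\cdot\rangle\leq0\}$, and it is precisely this compact-support input that kills the boundary stalks (the intersection $\sigma\cap\{\langle\xi,\cdot\rangle\leq 0\}$ is then a positive-dimensional pointed cone with vanishing $H^*_c$). Your ``visibility argument'' does not supply this; you would need to go back to the kernel definition of $\FT$, at which point you are essentially reproving the KS lemma. The Verdier-duality alternative you mention at the end is the cleanest way to salvage the direct approach: establish one of the two formulas by the genuine $R\Gamma_c$ computation and deduce the other, or, as the paper does, just invoke \cite{KS}.
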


\begin{proof}
This is a special case of Lemma 3.7.10 in \cite{KS}.
\end{proof}

\subsection{Microlocalization, singular support, and the microlocal Morse lemma}

\begin{definition}
Let $F \in \Sh_{c,\pol}(M)$, $x \in M$, and $\xi \in T^*_x M$.
\begin{itemize}
\item The \emph{microlocalization} of $F$ at $x$ is given by
$$\mu_x(F) = \FT(\nu_x(F))$$
Thus, $\mu_x$ is a functor from $\Sh_c(M) \to \Sh_{c,\bR_{>0}}(T^*_x M)$ that takes polyhedral sheaves to conical polyhedral sheaves.
\item The \emph{microlocal stalk} of $F$ at $(x,\xi)$ is the stalk of $\mu_x F$ at $\xi$.  We denote it by $\mu_{x,\xi} F$.
\item The \emph{singular support} of $F$ is the closure of the set of all $(x,\xi)$ such that $\mu_{x,\xi} F \neq 0$.  We denote this set by $\SS(F)$.
\end{itemize}
\end{definition}

Singular support gives us a language to discuss Morse theory for sheaves.  For instance, we say that a point $x \in M_\bR$ is a \emph{critical point} of a sheaf $F$ with respect to a function $f$ if $(x,df_x) \in M_\bR \times N_\bR$ belongs to $\SS(F)$.  We also have the following important analogue of the usual Morse lemma:

\begin{theorem}[Microlocal Morse lemma]
Let $V$ be a real vector space and let $G$ be a polyhedral sheaf on $V$ of finite type.  Let $\eta:V \to \bR$ be a linear function.  Fix $a \in \bR$, and suppose that for all $v \in V$ with $\eta(v) \geq a$, the cotangent vector $(v,d\eta_v)$ does not lie in the singular support of $G$.  Then the restriction map
$$\Gamma(V;G) \to \Gamma(\eta^{-1}(-\infty,a);G)$$
is a quasi-isomorphism.
\end{theorem}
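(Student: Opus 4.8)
The plan is to reduce the statement to the vanishing of a local cohomology group, and then to obtain that vanishing by sweeping along the level sets of $\eta$.

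First I would carry out the reduction. Write $U = \eta^{-1}(-\infty,a)$ and $Z = \eta^{-1}[a,+\infty)$, with $j\colon U\hookrightarrow V$ the open inclusion and $i\colon Z\hookrightarrow V$ the closed one. Applying $\Gamma(V;-)$ to the recollement triangle $i_*i^!G \to G \to j_*j^*G \to$ produces a triangle $\Gamma_Z(V;G)\to\Gamma(V;G)\to\Gamma(U;G)\to$, where $\Gamma_Z$ denotes derived sections supported on $Z$; indeed $\Gamma(V;i_*i^!G)=\Gamma(Z;i^!G)=\Gamma_Z(V;G)$ and $\Gamma(V;j_*j^*G)=\Gamma(U;G)$. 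Thus the restriction map is a quasi-isomorphism if and only if $\Gamma_Z(V;G)=0$, i.e.\ if and only if $G$ has no derived sections supported in the closed half-space $\{\eta\ge a\}$.

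Second, the vanishing. For $t\in\bR$ put $U_t=\eta^{-1}(-\infty,t)$, so that $U_a=U$, the $U_t$ are increasing, and $\bigcup_{n\ge 0}U_{a+n}=V$; for $s\le t$ there is a restriction map $\Gamma(U_t;G)\to\Gamma(U_s;G)$. The engine is the microlocal non-characteristic deformation principle of \cite{KS}: if the closed slab $\eta^{-1}[s,t]$ contains no critical point of $G$ with respect to $\eta$ --- no $v$ with $(v,d\eta_v)\in\SS(G)$ --- then $\Gamma(U_t;G)\to\Gamma(U_s;G)$ is a quasi-isomorphism. In the language of Section~\ref{sec:microlocal} the point is that $(v,d\eta_v)\notin\SS(G)$ means $\mu_{v,d\eta_v}(G)=0$, so crossing the level $\eta^{-1}(\eta(v))$ near $v$ changes nothing. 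Because the hypothesis gives this condition on \emph{all} of $\{\eta\ge a\}$ --- in particular on every slab $\eta^{-1}[a+n,a+n+1]$ --- every transition map in the tower $\big(\Gamma(U_{a+n};G)\big)_{n\ge 0}$ is a quasi-isomorphism. The Milnor $\varprojlim^1$ sequence for the increasing union $V=\bigcup_n U_{a+n}$ then gives $\Gamma(V;G)=\Gamma(\bigcup_n U_{a+n};G)\xrightarrow{\sim}\Gamma(U_a;G)=\Gamma(U;G)$, which is exactly $\Gamma_Z(V;G)=0$. Note that because the hypothesis is phrased with the \emph{closed} condition $\eta(v)\ge a$, there is no difficulty at the boundary hyperplane $\eta^{-1}(a)$: it already lies in the first slab of the tower.

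The step I expect to be the main obstacle is that $\eta$ is not proper on $\supp(G)$ --- the slabs $\eta^{-1}[s,t]$ are unbounded --- so the versions of the deformation lemma in \cite{KS} that assume properness of the sweeping function do not apply verbatim. This is exactly where the finite-type polyhedral hypothesis is essential. Since $G$ is $\bR$-constructible for a piecewise-linear stratification with finitely many strata, $\SS(G)$ is a closed conical polyhedral subset of $T^*V$ cut out by finitely many linear conditions (one checks that $(v,d\eta_v)$ can lie in $\SS(G)$ only for $v$ in the closure of a stratum on which $\eta$ is constant, so there are only finitely many critical values), and $G$ is ``conical near infinity,'' i.e.\ outside a large ball each stratum coincides with a translate of one of finitely many cones. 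One then runs the sweep inside a large ball $B_R$, where properness holds, and checks that the contribution to $\Gamma(U_t;G)$ coming from outside $B_R$ is independent of $t$ for $t\ge a$, being controlled by those finitely many recession cones. Equivalently, one may reduce by a filtration argument to standard sheaves $i_{P*}\sfR_P$ on finite-type polyhedra $P$, for which $\Gamma(U_t;i_{P*}\sfR_P)=\Gamma(P\cap U_t;\sfR_P)$ and the non-characteristic hypothesis forces $P\cap U_a\hookrightarrow P$ to be a deformation retract --- one pushes along $-\eta$, a motion that stays inside $P$ precisely because $P$ has no $\eta$-critical corner above level $a$. The filtration route must be handled carefully, since $\SS$ is only subadditive over triangles; it is cleanest to argue with $\SS(G)$ directly as in the second paragraph.
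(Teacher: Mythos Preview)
Your proposal is correct and matches the paper's approach: the paper's proof is only a terse sketch, noting that the sole obstruction to citing \cite[Corollary 5.4.19(i)]{KS} directly is the non-properness of $\eta$, and indicating the same two remedies you outline---compactifying the fibers of $\eta$ (your large-ball argument), or reducing to standard sheaves on finite-type polyhedra and checking the restriction map directly. Your elaboration is considerably more detailed than what the paper provides, and your flagging of the $\SS$-subadditivity caveat on the filtration route is apt.
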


\begin{proof}
The conclusion is the same as that of the usual microlocal Morse lemma \cite[Corollary 5.4.19 (i)]{KS}, except that the hypotheses there require $\eta$ be proper.  The theorem can be proved by reducing it to loc. cit., by compactifying the fibers of $\eta$, or by reducing it to the case where $G$ is a standard sheaf on a finite type polyhedron in $V$, and showing that the restriction map is a quasi-isomorphism directly.  \end{proof}

We have already mentioned that there is a specialization functor $\nu_Y$ for more general subsets $Y \subset M_\bR$.  Similarly, when $Y$ is smooth, there is a functor $\mu_Y$ that takes values in sheaves on the conormal bundle to $Y$, defined as $\FT \circ \nu_Y$ for a relative version of $\FT$.  We do not actually need these more general microlocal operations, but we will make use of the following basic consequence:

\begin{proposition}
\label{prop:microlocalsystem}
Let $F$ be a constructible sheaf and let $\SS(F)^\mathrm{reg}$ be the set of smooth points of $\SS(F)$.  Then there is a locally constant sheaf on $\SS(F)^\mathrm{reg}$ whose stalk at $(x,\xi) \in \SS(F)^\mathrm{reg}$ is naturally isomorphic to $\mu_{x,\xi}(F)$.  In particular, $\mu_x(F)$ is locally constant on $\SS(F)^\mathrm{reg} \cap T_x^* V$.
\end{proposition}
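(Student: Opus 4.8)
The plan is to reduce the claim to the corresponding statement for the \emph{Fourier--Sato transform} of conical sheaves, since $\mu_x(F) = \FT(\nu_x(F))$ and $\nu_x(F)$ depends only on the first-order behavior of $F$ at $x$. Fix $x$ and choose, by Proposition \ref{prop:nu}, a piecewise-linear stratification $\cS$ of a neighborhood $U$ of $x$ on which $F$ is constructible; after shrinking, we may assume $\nu_x(F)|_U \cong F|_U$, so $\nu_x(F)$ is a conical polyhedral sheaf on $T_x M_\bR$, constructible with respect to a fan-like (conical) stratification. Then $\mu_x(F) = \FT(\nu_x(F))$ is conical and polyhedral on $T^*_x M_\bR$, hence constructible with respect to some conical polyhedral stratification $\cT$ of $T^*_x M_\bR$. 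The essential point is that $\mu_x(F)$, being constructible, is automatically \emph{locally constant on each stratum} of $\cT$; so the first and last claims of the proposition will follow once we know that $\SS(F)^{\mathrm{reg}} \cap T^*_x M_\bR$ is contained in the union of (relatively open) strata of $\cT$ on which $\mu_x(F)$ is locally constant — equivalently, that the smooth locus of $\SS(F)$ meets $T^*_x V$ inside the smooth (top-dimensional-in-its-stratum) part of $\supp(\mu_x F) \cup \{0\}$-type loci.

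The key steps, in order, are: (i) record that $\SS(F) = \overline{\{(x,\xi) : \mu_{x,\xi}F \neq 0\}}$ and that, for polyhedral $F$, this set is a conical polyhedral (in particular subanalytic, Lagrangian) subvariety of $T^*M_\bR$, so $\SS(F)^{\mathrm{reg}}$ is a smooth manifold dense in $\SS(F)$; (ii) for a fixed smooth point $(x_0,\xi_0) \in \SS(F)^{\mathrm{reg}}$, invoke the relative microlocalization functor $\mu_Y$ attached to a smooth submanifold $Y$ through $x_0$ chosen so that $T^*_Y M_\bR$ is transverse to $\SS(F)$ near $(x_0,\xi_0)$ and agrees there with a neighborhood in $\SS(F)^{\mathrm{reg}}$ — this is possible precisely because $(x_0,\xi_0)$ is a smooth point; (iii) cite from \cite{KS} that $\mu_Y(F)$ is a complex on the conormal bundle $T^*_Y M_\bR$ whose restriction to $\SS(F) \cap T^*_Y M_\bR$ is locally constant, and whose stalk at $(x,\xi)$ recovers $\mu_{x,\xi}(F)$ when $T_x Y$ is chosen compatibly; (iv) glue these locally constant sheaves, obtained over a cover of $\SS(F)^{\mathrm{reg}}$ by such transversal slices, using that the identifications on overlaps are the canonical ones coming from functoriality of $\mu_Y$ in $Y$, to produce a global locally constant sheaf $\mathcal{M}$ on $\SS(F)^{\mathrm{reg}}$ with $\mathcal{M}_{(x,\xi)} \cong \mu_{x,\xi}(F)$; (v) specialize to $Y = \{x\}$, where $\mu_{\{x\}} = \mu_x$, to conclude that $\mu_x(F)$ is locally constant on $\SS(F)^{\mathrm{reg}} \cap T^*_x V$.

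The main obstacle is step (iv) — verifying that the locally constant sheaves produced on the transversal slices patch coherently into one global object on $\SS(F)^{\mathrm{reg}}$. This amounts to checking that the canonical isomorphisms $\mu_{Y}(F)|_{\SS(F)} \cong \mu_{Y'}(F)|_{\SS(F)}$ on overlaps of slices are compatible with triple intersections (a cocycle condition), which follows formally from the functoriality of relative microlocalization with respect to closed embeddings $Y \hookrightarrow Y'$ but must be set up carefully; in the polyhedral setting one can sidestep much of the general machinery by observing that, locally near a smooth point, $\SS(F)^{\mathrm{reg}}$ looks like a piece of a Lagrangian plane $\Lambda_0 = \{(x,\xi_0 + \eta) : \eta \in L^\perp\}$ for a linear subspace $L$, and $\mu_{x,\xi}(F)$ is then computed by the Fourier--Sato transform of $\nu_{x}(F)$ along the family, which is manifestly locally constant in $(x,\xi) \in \Lambda_0$ because $\nu_x(F)$ varies in a locally constant way along $L$. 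A clean writeup would do the polyhedral/linear-algebra version directly: choose affine coordinates so that near $(x_0,\xi_0)$ the sheaf $F$ is, up to an isomorphism, pulled back from a stratification of a quotient space, reducing the computation of $\mu_{x,\xi}(F)$ for $(x,\xi)$ ranging over a neighborhood in $\SS(F)^{\mathrm{reg}}$ to a single Fourier--Sato computation à la Proposition \ref{prop:FT}, which visibly gives a constant answer. I would present the argument this way, citing \cite{KS} for the existence and basic properties of $\mu_Y$ and for the constructibility of $\mu_x(F)$, and doing the gluing by hand in the polyhedral case.
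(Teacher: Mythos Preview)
The paper does not actually give a proof of this proposition. It is stated immediately after the sentence ``We do not actually need these more general microlocal operations, but we will make use of the following basic consequence,'' referring to the relative microlocalization functor $\mu_Y = \FT \circ \nu_Y$ along a smooth submanifold $Y$. In other words, the paper treats the proposition as a black-box fact imported from \cite{KS}: the sheaf $\mu_{\mathrm{hom}}(\sfR_{M_\bR},F)$ (or equivalently the restriction of the appropriate $\mu_Y(F)$) already lives on $T^*M_\bR$, is supported on $\SS(F)$, has stalks $\mu_{x,\xi}(F)$, and is locally constant on the smooth locus. No gluing is performed and no polyhedral reduction is carried out.

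Your proposal is therefore not wrong, but it is considerably more elaborate than the paper's own treatment, and in one place it is over-complicated. Steps (ii)--(iii) are exactly what the paper is gesturing at: the existence and properties of $\mu_Y$ from \cite{KS} are the entire content. Your step (iv), the gluing over transversal slices with a cocycle check, is unnecessary: Kashiwara--Schapira already provide a single global object on $T^*M_\bR$ (namely $\mu\mathrm{hom}(\sfR,F)$, or equivalently the sheaf of microlocal sections) whose stalks are the $\mu_{x,\xi}(F)$ and whose local constancy along the smooth Lagrangian locus is established there. So rather than building local systems on slices and patching, you can simply cite the global construction and its constructibility properties. Your alternative polyhedral shortcut at the end --- trivializing the situation near a smooth point by a linear change of coordinates so that $F$ is pulled back from a quotient and $\nu_x(F)$ is constant in the relevant directions --- is a perfectly good self-contained argument in the polyhedral case, and is arguably cleaner than invoking the full machinery; but it is not what the paper does.
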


\section{Shard sheaves}
\label{sec:shardsheaves}

Let us define a larger variant of the posets $\bGamma(\Sigma,M)$ defined in Section \ref{sec:three}.  For each real vector space $M_\bR$, we let $\bGamma(M_\bR)$ denote the set of pairs $(\sigma,c)$ where $\sigma$ is a polyhedral cone in the dual space $N_\bR$ and $c$ is a coset in $M_\bR/\sigma^\perp$.  We give this set a partial order by setting $(\sigma,c) \leq (\tau,d)$
whenever $c + \sigma^\vee \subset d + \tau^\vee$.  For each $(\sigma,c) \in \bGamma(M_\bR)$ we define a costandard constructible sheaf $\Theta(\sigma,c)$ on $M_\bR$ by the same formula as Definition \ref{def:theta}:
$$\Theta(\sigma,c) = j_!\omega_{(c + \sigma^\vee)^\circ}$$
where $j$ denotes the inclusion map  $(c+ \sigma^\vee)^\circ \hookrightarrow M_\bR$.

\subsection{Lagrangian shard arrangements}
Let $M_\bR$ be a finite-dimensional real vector space and $N_\bR$ the dual vector space.
We make the following definitions:
\begin{itemize}
\item For each $(\sigma,c) \in \bGamma(M_\bR)$, the \emph{Lagrangian shard} $Z(\sigma,c)$ is the  subset of  $M_\bR \times N_\bR$ given by
$$Z(\sigma,c) = c+\sigma^\perp \times -\sigma = \{(m,n) \mid -n \in \sigma \text{ and } \langle m-x,n'\rangle = 0 \text{ for all }x \in c,\,n' \in \sigma\}$$
\item A subset $\Lambda \subset M_\bR \times N_\bR$ is a \emph{finite shard arrangement} if we have
$$\Lambda = \bigcup_{i= 1}^n Z_i$$
where $Z_1,\ldots,Z_n$ is a finite list of shards.  If $\cZ \subset \bGamma(M_\bR)$ and each $Z_i$ belongs to $\cZ$, then we will say that $\Lambda$ has type $\cZ$.  (Thus, if $\cZ_1 \subset \cZ_2$ and $\Lambda$ has type $\cZ_1$, it also has type $\cZ_2$.)
\end{itemize}

The \emph{height} of a shard $Z(\sigma,c)$ is the dimension of $\sigma$, and the height of a finite shard arrangement is the largest of the heights of its shards.

\subsection{Shard sheaves}
\label{sec:fivetwo}

A \emph{finite shard sheaf} on $M_\bR$ is a constructible sheaf $F$ whose singular support belongs to a finite shard arrangement.  We emphasize that the singular support does not itself have to be a finite shard arrangement.  A finite shard sheaf has type $\cZ$ if its singular support belongs to a finite shard arrangement of type $\cZ$.  The height of a finite shard sheaf is the height of the smallest shard arrangement containing its singular support.

We denote the triangulated dg category of finite shard sheaves by $\Shard(M_\bR) \subset \Sh(M_\bR)$, and we denote the full subcategory of finite shard sheaves of type $\cZ$ by $\Shard(M_\bR;\cZ)$.  If there is a subset $\Lambda \subset T^*M_\bR$ such that $\cZ$ is the collection of all $(\sigma,\chi)$ with $Z(\sigma,\chi) \subset \Lambda$, then we will sometimes write $\Shard(M_\bR;\Lambda) := \Shard(M_\bR;\cZ)$.

\begin{proposition}
\label{prop:thetass}
For each $(\sigma,c) \in \bGamma(M_\bR)$, the costandard sheaf $\Theta(\sigma,c)$
is a finite shard sheaf.
\end{proposition}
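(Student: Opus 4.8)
The plan is to compute the singular support of $\Theta(\sigma,c)$ explicitly and check that it is contained in a single shard, namely $Z(\sigma,c)$ itself (so in fact $\Theta(\sigma,c)$ has type $\{(\sigma,c)\}$, which is more than what is claimed). Since $\Theta(\sigma,c) = j_! \omega_{(c+\sigma^\vee)^\circ}$ differs from $j_! \sfR_{(c+\sigma^\vee)^\circ}$ only by a shift and tensoring with an orientation local system, and neither operation changes the singular support, it suffices to analyze the extension-by-zero of the constant sheaf on the open convex cone $(c+\sigma^\vee)^\circ$. By translating we may assume $c$ passes through the origin, so that $(c+\sigma^\vee)^\circ = U + \sigma^\perp$ where $U = (\sigma^\vee)^\circ \cap (\sigma^\perp)^{\perp\perp}$ is an open convex cone in a complement of $\sigma^\perp$; thus the sheaf is a pullback along the linear projection $M_\bR \to M_\bR/\sigma^\perp$ of the extension by zero of a constant sheaf on an open convex cone, and the singular support of a pullback is the pullback (in the appropriate sense) of the singular support, which accounts for the factor $\sigma^\perp$ in the base direction and forces the covector direction to lie in the annihilator of $\sigma^\perp$, i.e. in $\mathrm{span}(\sigma)$.

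The key computation, then, is the singular support of $j_! \sfR_U$ for $U$ an open convex cone (with nonempty interior) in a vector space $W$. I would carry this out via the microlocalization functor of Section \ref{sec:microlocal}: since $j_! \sfR_U$ is already conical for the scaling action, $\nu_x$ at the cone point $x=0$ returns the sheaf itself, and Proposition \ref{prop:FT} computes $\FT_W(j_!\sfR_U)$. Writing $U = \tau^\circ$ for $\tau$ the closure of $U$ (a closed convex cone), Proposition \ref{prop:FT} gives $\FT_W(s_*\sfR) \cong t_!\sfR$ with $t:\,(\tau^\vee)^\circ \hookrightarrow W^*$, but here I want the $!$-extension from the open cone; applying Verdier duality, or directly the first isomorphism of Proposition \ref{prop:FT} after identifying $j_!\sfR_{\tau^\circ}$ with a shift of $s_!\ori_{\tau^\circ}[\dim\tau]$, shows that $\mu_0(j_!\sfR_{\tau^\circ})$ is supported on $-\tau^\vee$ (up to the orientation twist). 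At points $x \neq 0$ of the cone, the sheaf is locally constant on the open stratum and locally isomorphic (via $\nu_x$, using Proposition \ref{prop:nu}) to the $!$-extension of the constant sheaf on an open cone which is the tangent cone of $\tau^\circ$ at $x$; running the same Fourier-Sato computation there shows $\mu_x$ is supported on the negative dual of that tangent cone. Assembling over all $x$, one finds $\SS(j_!\sfR_{\tau^\circ}) = \bigcup_{x\in\tau}\{x\}\times(-(\text{tangent cone at }x)^\vee)$, and a direct check of convex geometry shows this union is contained in $\overline{\tau^\circ}\times(-\tau^{\vee\vee}) \subset W\times W^*$ refined along the face structure; in any case it is a finite union of sets of the form (affine piece)$\times$(cone), hence a finite shard arrangement. (Alternatively, and perhaps more cleanly, one can avoid the stratum-by-stratum analysis by invoking that $j_!\sfR_U$ for $U$ a convex open polyhedron is a standard object generating the finite-type polyhedral category, and quote the general principle from \cite{KS} that the singular support of such a sheaf is a finite union of conormal pieces of the face stratification of $\partial U$—but since the paper is developing exactly this microlocal dictionary, I would spell out the direct computation.)

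The main obstacle is purely bookkeeping: matching up the orientation/shift conventions in Proposition \ref{prop:FT} (which is stated for the pair $s_!\ori[\dim\sigma]$ / $t_*$ and $s_*$ / $t_!$) with the costandard sheaf $\Theta(\sigma,c)=j_!\omega_{(c+\sigma^\vee)^\circ}$, and making sure the codimension-$\dim\sigma$ subspace $\sigma^\perp$ contributes correctly to both the base and fiber directions of the singular support so that one lands in $Z(\sigma,c)=c+\sigma^\perp\times(-\sigma)$ and not something larger. Once the $x=0$ case is pinned down, the general point follows by specialization (Proposition \ref{prop:nu}) together with the invariance of singular support under the local isomorphisms it provides, and the finiteness assertion is immediate since the face poset of $\sigma^\vee$ is finite.
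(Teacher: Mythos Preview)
Your opening claim---that $\SS(\Theta(\sigma,c))$ is contained in the single shard $Z(\sigma,c)=(c+\sigma^\perp)\times(-\sigma)$, so that $\Theta(\sigma,c)$ has type $\{(\sigma,c)\}$---is false. Take $\sigma$ of positive dimension: then $c+\sigma^\perp$ is a proper affine subspace of $M_\bR$, so $Z(\sigma,c)$ contains none of the zero section away from that subspace. But $\Theta(\sigma,c)$ is nonzero and locally constant on the open set $(c+\sigma^\vee)^\circ$, hence its singular support contains the zero section over that entire open set. (Concretely, in dimension one with $\sigma=\bR_{\ge 0}$ and $c=\{0\}$, the sheaf $j_!\sfR_{(0,\infty)}$ has singular support $[0,\infty)\times\{0\}\cup\{0\}\times(-\infty,0]$, which is not contained in $\{0\}\times(-\infty,0]$.) Your own detailed computation in the middle paragraph contradicts your opening: you correctly observe that assembling the microlocal stalks over all $x$ gives a union indexed by the face structure, not a single piece.

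Once the single-shard claim is dropped, your strategy is exactly the paper's, only with an unnecessary projection detour. The paper simply computes, for each face $\tau$ of $\sigma$ and each point $y$ lying on the corresponding face of $x+\sigma^\vee$, that $\nu_y\Theta(\sigma,x+\sigma^\perp)$ is the costandard sheaf on $\tau^\vee$ (this is the tangent cone of $\sigma^\vee$ at such a point), and then Proposition~\ref{prop:FT} gives that $\mu_y$ is the standard sheaf on $-\tau$. The conclusion is
\[
\SS(\Theta(\sigma,x+\sigma^\perp))\ \subset\ \bigcup_{\tau\text{ face of }\sigma} Z(\tau,\,x+\tau^\perp),
\]
a finite union of shards. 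Your projection step only tells you the covector direction lies in $\mathrm{span}(\sigma)$, which is already implied by this face-by-face containment and does not cut down the number of shards. The cleanest fix is to delete the single-shard claim and the projection reduction, and state directly that the specialize-then-Fourier-Sato computation at each boundary point yields $-\tau$ for the appropriate face $\tau$, exactly as the paper does.
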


\begin{proof}
If $\tau$ is a face of $\sigma$ then for every $y \in \tau^\circ$, the specialization $\nu_y(\Theta(\sigma,x+\sigma^\perp) )$ coincides with the costandard sheaf on $\tau^\vee \subset M_\bR$ under the identification of $M_\bR$ with $T_y M_\bR$.  Thus $\mu_y(\Theta(\sigma,x))$ is the standard sheaf on $-\tau$ by Proposition \ref{prop:FT}.  In particular $\mu_{y,\xi}$ vanishes if $(y,\xi)$ is not in $Z(\tau,x+\tau^\perp)$ for some $\tau$.  It follows that
$$\SS(\Theta(\sigma,x+\sigma^\perp)) \subset \bigcup_\tau Z(\tau,x + \tau^\perp)$$
\end{proof}

The rest of this section is devoted to the proof of the following theorem.

\begin{theorem}
\label{thm:thetagen}
Suppose $\cZ \subset \bGamma(M_\bR)$ satisfies the following conditions:
\begin{enumerate}
\item[(Z1)]  The set of cones $\sigma \subset N_\bR$ such that $(\sigma,c)$ appears in $\cZ$ for some $c$ is a finite polyhedral fan.  (We do not require this fan to be rational.)
\item[(Z2)] If $(\sigma,x + \sigma^\perp) \in \cZ$ and $\tau$ is a face of $\sigma$, then $(\tau,x+\tau^\perp) \in \cZ$.
\end{enumerate}
Then the category $\Shard(M_\bR;\cZ)$ is generated by the sheaves $\{\Theta(\sigma,c) \mid (\sigma,c) \in \cZ\}$.
\end{theorem}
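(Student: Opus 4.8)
The plan is to reduce the statement to two nested inductions, exactly as the introduction promises: an outer induction on the height $h$ of the shard arrangement and an inner induction on the ``number of critical points'' of $F$ with respect to a generic linear function, which measures how many height-$h$ shards actually contribute to $\SS(F)$. Fix $F \in \Shard(M_\bR;\cZ)$ and a finite shard arrangement $\Lambda = \bigcup Z_i$ of type $\cZ$ containing $\SS(F)$. If the height of $\Lambda$ is $0$, then $\SS(F)$ lies in the zero section, so $F$ is locally constant on $M_\bR$; but by (Z1) the relevant fan is finite and $F$ is polyhedral of finite type, and a locally constant sheaf on a (connected, contractible) vector space is just a perfect complex of $\sfR$-modules — hence a shift-sum of copies of the constant sheaf $\sfR_{M_\bR} = \Theta(0,0)$, which lies in $\langle \Theta \rangle$. (More carefully: height $0$ forces every $Z(\sigma,c) \subset \Lambda$ to have $\dim\sigma = 0$, i.e. $\sigma = \{0\}$, so the only generators available are translates $\Theta(0,c) = \sfR_{M_\bR}$; and finite type plus locally constant forces $F$ constant.) This is the base case.

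For the inductive step, assume the theorem for all shard arrangements of height $< h$, and let $\Lambda$ have height $h$. First I would choose a linear function $\eta \in N_\bR$ in ``general position'' relative to $\Lambda$ — meaning that $(x, d\eta_x) = (x,\eta) \in \Lambda$ only at finitely many points $x$, and at each such point only a single height-$h$ shard $Z(\sigma_j, c_j)$ is responsible (this is possible because each height-$h$ shard $Z(\sigma,c) = (c+\sigma^\perp) \times (-\sigma)$ meets the hyperplane $N_\bR$-slice $\{\eta\}$ in the affine subspace $c+\sigma^\perp$ intersected with the condition $-\eta \in \sigma$, which is a single point for generic $\eta$ when $\dim\sigma = h$ is maximal). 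Order these critical points $x_1, \dots, x_k$ by increasing value of $\eta$. The inner induction is on $k$. If $k = 0$, then $\eta$ is noncritical everywhere, so by the Microlocal Morse Lemma $\Gamma(M_\bR; F) \xrightarrow{\sim} \Gamma(\eta^{-1}(-\infty, a); F)$ for all $a$, and letting $a \to -\infty$ shows $\Gamma(M_\bR; F) = 0$; combined with a sweeping argument (Morse lemma applied to all sublevel sets) this forces $F$ to have singular support in height $< h$, so we conclude by the outer inductive hypothesis.

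For the inner step, look at the topmost critical point $x_k$, with responsible height-$h$ shard $Z(\sigma_k, c_k)$, and compute $\mu_{x_k}(F)$. By Proposition~\ref{prop:microlocalsystem} the microlocal stalk $\mu_{x_k,\xi}(F)$ is governed by a locally constant system on $\SS(F)^{\mathrm{reg}}$, and near the covector $(x_k,\eta)$ the set $\SS(F)$ is locally the smooth shard $Z(\sigma_k,c_k)$; let $P$ denote the perfect complex of $\sfR$-modules which is this microlocal stalk. I would then build a ``correction'' object: a finite complex $C$ of shifted copies of $\Theta(\sigma_k, c_k)$ — using the resolution of the perfect complex $P$ and that $\mu_{x_k}(\Theta(\sigma_k,c_k))$ is (by the computation in the proof of Proposition~\ref{prop:thetass}) a standard sheaf supported precisely on $-\sigma_k$ — together with a map $C \to F$ (or $F \to C$, after twisting/dualizing) that is an isomorphism on microlocal stalks along the open shard $Z(\sigma_k,c_k)^{\mathrm{reg}}$, realized by composing suitable canonical maps among the $\Theta$'s. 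Forming the cone $F'$, we get a triangle $C \to F \to F' \to$ with $C \in \langle \Theta \rangle$; the key point is that $F'$ now has one fewer height-$h$ critical point with respect to $\eta$ (the microlocal contribution at $x_k$ has been cancelled, and no new critical points are created because $C$ is supported where $\eta \le \eta(x_k)$ in the relevant microlocal sense, and by genericity of $\eta$ the cancellation is ``clean''). Here one must also check that the face condition (Z2) guarantees that the lower-height shards appearing in $\SS(\Theta(\sigma_k,c_k))$ — namely the $Z(\tau,c_k+\tau^\perp)$ for faces $\tau$ of $\sigma_k$ — are themselves of type $\cZ$, so $F'$ stays in $\Shard(M_\bR;\cZ)$. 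Thus $F'$ is handled by the inner inductive hypothesis (fewer height-$h$ critical points) or, once all height-$h$ critical points are exhausted, by the $k=0$ case and hence the outer hypothesis; since $\langle \Theta \rangle$ is triangulated and $C, F' \in \langle\Theta\rangle$, we conclude $F \in \langle\Theta\rangle$.

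The main obstacle, and the place where real work is needed, is the inner inductive step: constructing the morphism $C \to F$ (or $F \to C$) that precisely cancels the top microlocal contribution. One has to (i) pick the correct shifts and the correct orientation/normalization so that the induced map on the microlocal stalk at $(x_k,\eta)$ is the identity on $P$; (ii) verify, via the Fourier–Sato computation of Proposition~\ref{prop:FT} and the locally-constant-system statement of Proposition~\ref{prop:microlocalsystem}, that this single map already forces an isomorphism along the whole connected smooth locus of the shard $Z(\sigma_k,c_k)$ appearing in $\SS(F)$; and (iii) check that passing to the cone does not re-introduce a height-$h$ singularity elsewhere — this is where genericity of $\eta$ and the finiteness from (Z1) are used, essentially by a dimension/transversality count showing the only height-$h$ interaction between $C$ and $F$ is at $x_k$. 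Everything else — the base case, the Morse-lemma vanishing argument, the bookkeeping with (Z2) — is routine given the tools assembled in Section~\ref{sec:microlocal}.
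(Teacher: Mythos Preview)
Your double induction matches the paper's architecture, but the inner induction has a genuine gap that breaks the argument whenever $h < \dim M_\bR$. You claim that for a generic covector $\eta \in N_\bR$ there are only finitely many $x$ with $(x,\eta) \in \Lambda$. But a height-$h$ shard is $Z(\sigma,c) = (c+\sigma^\perp) \times (-\sigma)$, and the condition $(x,\eta) \in Z(\sigma,c)$ reads $-\eta \in \sigma$ \emph{and} $x \in c+\sigma^\perp$. For generic $\eta$ the first clause is either vacuous or always satisfied; when it holds, the critical locus is the entire $(n-h)$-dimensional affine subspace $c+\sigma^\perp$, not a single point. So your inner induction variable $k$ is ill-defined unless $h=n$, and your $k=0$ base case is wrong for the same reason: a single noncritical direction $\eta$ only tells you that $-\eta$ misses every height-$h$ cone $\sigma$ in $\SS(F)$, which says nothing about height-$h$ shards pointing in other directions.

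The paper fixes this by running the inner induction on the number of height-$h$ shards in $\SS(F)$ (the ``$h$-complexity''), not on critical points of a fixed function, and by building the correction map in the opposite direction from yours. The d\'evissage Lemma~\ref{lem:dev} chooses, for some $h$-dimensional $\sigma$, the shard $Z(\sigma,c)$ that is extremal in the partial order on $\bGamma(M_\bR)$, and then applies the microlocal Morse lemma to $\uhom(F,\Theta(\sigma,c))$ (not to $F$) with a direction in $\sigma^\circ$ to show that the local homs along $c$ --- computed in Lemma~\ref{lem:shhom} as the dual microlocal stalk --- globalize. This is the ``unblocked'' condition. The correction is then the coevaluation triangle
\[
F \to \dghom(F,\Theta(\sigma,c))^* \otimes \Theta(\sigma,c) \to F'' \to,
\]
which kills the microlocal stalk along the whole affine subspace $c$ at once and drops the $h$-complexity by one. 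Your proposed map ``$C \to F$ realized by composing canonical maps among the $\Theta$'s'' has no content: there are no canonical maps from $\Theta$-sheaves into an arbitrary $F$. The paper instead produces a map \emph{out of} $F$ via adjunction and the unblockedness argument, and that production --- extremal choice of shard plus Morse lemma for the hom-sheaf, underpinned by Proposition~\ref{prop:dev} --- is exactly the real work your sketch is missing.
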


\subsection{The sheaf of homomorphisms into $\Theta$}

An essential ingredient of the proof of Theorem \ref{thm:thetagen} is an application of the microlocal Morse lemma (MML) to the sheaf of homomorphisms $\uhom(F,\Theta(\sigma,c))$.  The hypotheses of the MML require us to understand the singular support of this sheaf.  As a first step we compute its stalks.

\begin{definition}
Let $F$ be a finite shard sheaf on $M_\bR$ of height $h$, and let $\sigma$ be an $h$-dimensional cone in $N_\bR$.  Then $\sigma$ is \emph{narrow} relative to $F$ if for any point $x \in M_\bR$ we either have $\{x\} \times -\sigma \subset \SS(F)$ or $\{x\} \times -\sigma^\circ \cap \SS(F) = \varnothing$.\end{definition}

For instance, if $F$ has type $\cZ$ where $\cZ$ satisfies condition (Z1), and $(\sigma,c) \in \cZ$, then $\sigma$ is narrow with respect to $F$.  The main consequence of narrowness is that, by Proposition \ref{prop:microlocalsystem}, when $\sigma$ is narrow with respect to $F$ the microlocalization $\mu_x(F)$ is constant along the interior of $-\sigma$.  The next lemma expresses this consequence as a property of the stalks of $\uhom(F,\Theta(\sigma,c))$:

\begin{lemma}
\label{lem:shhom}
Let $F$ be a finite shard sheaf on $M_\bR$ of height $h$, and let $\sigma$ be an $h$-dimensional cone that is narrow with respect to $F$.  Let $c$ be a coset of $\sigma^\perp$.  Then for every $x \in c$ and every $\eta \in -\sigma^\circ$, the natural map
$$
\uhom(F,\Theta(\sigma,c))_x \to \dghom(\mu_{x,\eta}(F),\mu_{x,\eta}(\Theta(\sigma,c))) \cong \mu_{x,\eta}(F)^*[-\dim(M_\bR)]
$$
is a quasi-isomorphism.
\end{lemma}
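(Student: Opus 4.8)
The plan is to reduce the statement to one about conical sheaves on $M_\bR$, transport it across the Fourier--Sato transform, and finish with a direct computation of a $\dghom$ into the standard sheaf on a cone in $N_\bR$; narrowness is used only in the last step, which is the heart of the matter. To begin, both sides of the displayed map depend only on the germ of $F$ at $x$ and on the germ of $\Theta(\sigma,c)$ at $x$ --- the left side because $\uhom(-,-)_x$ is computed from germs, the right side because $\mu_x=\FT\circ\nu_x$ factors through the germ at $x$. Since $x\in c$ we have $c+\sigma^\vee=x+\sigma^\vee$, so near $x$ the sheaf $\Theta(\sigma,c)=j_!\omega_{(c+\sigma^\vee)^\circ}$ is literally conical about $x$, with associated conical sheaf the costandard sheaf $\Theta_0:=\Theta(\sigma,\sigma^\perp)$ on $T_xM_\bR\cong M_\bR$. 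By Proposition~\ref{prop:nu} the germ of $F$ at $x$ is carried by the conical sheaf $G:=\nu_xF$ at the origin, and $G$ is again a finite shard sheaf of height $\le h$ for which $\sigma$ remains narrow (specialization does not enlarge the relevant part of the singular support). Under these identifications the map of the lemma becomes the corresponding one for $G,\Theta_0,0,\eta$, so I may assume $F=G$ conical, $c=\sigma^\perp$, and $x=0$.

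For conical sheaves $\uhom(G,\Theta_0)$ is again conical, and a conical sheaf $H$ has $R\Gamma(M_\bR;H)\cong H_0$ (and the same over a small ball), so $\uhom(G,\Theta_0)_0\cong\dghom_{\Sh(M_\bR)}(G,\Theta_0)$. Since $\FT\colon\Sh_{c,\bR_{>0}}(M_\bR)\xrightarrow{\ \sim\ }\Sh_{c,\bR_{>0}}(N_\bR)$ is an equivalence, this equals $\dghom(\FT G,\FT\Theta_0)=\dghom(\mu_0G,\mu_0\Theta_0)$, compatibly with passing to stalks at $\eta\in-\sigma^\circ$; under all these identifications the map of the lemma becomes the stalk-at-$\eta$ map $\dghom(\mu_0G,\mu_0\Theta_0)\to\dghom\bigl((\mu_0G)_\eta,(\mu_0\Theta_0)_\eta\bigr)$. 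By Proposition~\ref{prop:FT}, $\mu_0\Theta_0=\FT(j_!\omega_{(\sigma^\vee)^\circ})$ is, up to the antipode, the standard sheaf $i_{-\sigma\,*}\sfR_{-\sigma}$ on the closed cone $-\sigma\subset N_\bR$ (this reproves Proposition~\ref{prop:thetass}): it is conical, constructible for the fan of faces of $\sigma$, and free of rank one on the relative interior $-\sigma^\circ$; this also fixes the identification of $\dghom(\mu_{x,\eta}F,\mu_{x,\eta}\Theta(\sigma,c))$ with $\mu_{x,\eta}(F)^*[-\dim M_\bR]$ recorded in the statement.

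It remains to show the stalk-at-$\eta$ map $\dghom(\mu_0G,\,i_{-\sigma\,*}\sfR_{-\sigma})\to\dghom\bigl((\mu_0G)_\eta,\sfR\bigr)$ is a quasi-isomorphism. Narrowness enters here: $\mu_0G$ is conical with singular support in a shard arrangement of height $\le h=\dim\sigma$, so $\{0\}\times(-\sigma^\circ)$ lies in the smooth locus of $\SS(G)$, and by Proposition~\ref{prop:microlocalsystem} the sheaf $\mu_0G$ is locally constant along $-\sigma^\circ$; as $-\sigma^\circ$ is convex it is there the constant sheaf with value $(\mu_0G)_\eta=\mu_{x,\eta}(F)$ (and if instead $\{0\}\times(-\sigma^\circ)$ misses $\SS(G)$, both sides vanish). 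Writing $i_{-\sigma\,*}\sfR_{-\sigma}=\cD(i_{-\sigma\,!}\omega_{-\sigma})$ and using $\dghom(A,\cD B)\cong\bigl(R\Gamma_c(A\otimes B)\bigr)^*$ with the projection formula turns the Hom into $\bigl(R\Gamma_c(-\sigma;\ \mu_0G|_{-\sigma}\otimes\omega_{-\sigma})\bigr)^*$. Now $\omega_{-\sigma}$, the dualizing complex of the pointed closed cone $-\sigma$, equals $\sfR_{-\sigma^\circ}[\dim\sigma]$ on $-\sigma^\circ$ and vanishes on every proper face (each proper face has a pointed transverse cone, whose compactly supported cohomology vanishes), so only the top stratum contributes; since $R\Gamma_c$ of an open convex cone of dimension $\dim\sigma$ is $\sfR[-\dim\sigma]$, the Hom collapses onto the contribution of $-\sigma^\circ$, which is $\mu_{x,\eta}(F)^*$ up to the shift recorded in the statement, and tracing the identifications shows this collapse is exactly the stalk-at-$\eta$ map.

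The main obstacle is this last step: showing that evaluation at a generic covector $\eta$ is an isomorphism, i.e. that the contributions of the proper faces of $-\sigma$ to the $\dghom$ cancel. This is precisely where both hypotheses are needed --- $\dim\sigma=h$ makes $-\sigma^\circ$ a top-dimensional stratum of $\SS(G)$ meeting nothing else, and narrowness makes $\mu_0G$ constant along it --- and it is where the degree bookkeeping (through $\omega_{-\sigma}$ and the compactly supported cohomology of cones) must be carried out with care.
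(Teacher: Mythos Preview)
Your argument is correct and agrees with the paper's through the reduction: both pass to germs at $x$, replace $F$ by the conical $\nu_xF$, and use the Fourier--Sato equivalence to identify $\uhom(F,\Theta(\sigma,c))_x$ with $\dghom(\mu_xF,\mu_x\Theta(\sigma,c))$ on $N_\bR$. The divergence is only in how this last $\dghom$ is evaluated.

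The paper takes a much shorter route at this point. Writing $i\colon-\sigma^\circ\hookrightarrow N_\bR$ for the inclusion of the \emph{relative interior}, one has $\mu_x\Theta(\sigma,c)\cong i_*\sfR$ up to a shift (Proposition~\ref{prop:FT}), and the $(i^*,i_*)$-adjunction gives in one stroke
\[
\dghom(\mu_xF,\,i_*\sfR)\;\cong\;\dghom(i^*\mu_xF,\,\sfR)\;=\;\dghom\bigl(\mu_xF|_{-\sigma^\circ},\,\sfR\bigr).
\]
Narrowness together with Proposition~\ref{prop:microlocalsystem} makes $\mu_xF|_{-\sigma^\circ}$ constant on the contractible set $-\sigma^\circ$, so this is $\dghom(\mu_{x,\eta}F,\sfR)$ and the proof is finished. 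Your route---writing the target as $\cD(i_{-\sigma!}\omega_{-\sigma})$ for the \emph{closed} cone, converting to $\bigl(R\Gamma_c(-\sigma;\mu_0G|_{-\sigma}\otimes\omega_{-\sigma})\bigr)^*$, and then arguing that $\omega_{-\sigma}$ vanishes on proper faces---is essentially reproving this adjunction by hand: the support condition on $\omega_{-\sigma}$ is exactly what makes the closed-cone pushforward $i_{-\sigma*}\sfR_{-\sigma}$ coincide with $i_*\sfR$ for the open $i$, and your $R\Gamma_c$ computation on $-\sigma^\circ$ is Verdier-dual to taking global sections of a constant sheaf on a contractible set. What the adjunction buys is that one never has to analyze $\omega_{-\sigma}$, track the boundary-face cancellations, or carry the degree bookkeeping you flag in your final paragraph; what your route buys is essentially nothing extra in this convex situation, though the dualizing-complex viewpoint can be helpful when the target is not of the form $i_*\sfR$.
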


In the lemma we are using the quasi-isomorphism $\mu_{x,\eta} \Theta(\sigma,c) \cong \sfR[-\dim(M_\bR)]$, from Proposition \ref{prop:FT}.

\begin{proof}
In general, if $A$ and $B$ are constructible sheaves then the stalk of $\uhom(A,B)$ at $x$ coincides with $\dghom(A\vert_U,B\vert_U)$ for a sufficiently small neighborhood $U$ of $x$.  By Proposition \ref{prop:nu} it follows that the natural map
$$\uhom(F,\Theta(\sigma,c))_x \to \dghom(\nu_x F,\nu_x \Theta(\sigma,c))$$
is a quasi-isomorphism.  Since $\FT$ is an equivalence of categories, the map
$$\uhom(F,\Theta(\sigma,c))_x \to \dghom(\mu_x F,\mu_x \Theta(\sigma,c))$$
is also a quasi-isomorphism.  If $i$ denotes the inclusion of $-\sigma^\circ$ into $N_\bR$, then we have $\mu_x \Theta(\sigma,c) = i_* \sfR[-\dim(M_\bR)]$, so by adjunction the map
$$\uhom(F,\Theta(\sigma,c))_x \to \dghom(i^* \mu_x F,\sfR[-\dim(M_\bR)])$$
is a quasi-isomorphism.

Since $\sigma$ is narrow with respect to $F$, $\mu_x(F)$ is constant on the interior of $-\sigma$, so $\dghom(i^* \mu_x F, \sfR[-\dim(M_\bR)])$ is naturally quasi-isomorphic to $\dghom(\mu_{x,\eta} F,\sfR[-\dim(M_\bR)])$ for every point $\eta \in -\sigma^\circ$.  This completes the proof.
\end{proof}

\begin{remarks}
\begin{enumerate}
\item
Let $F$ be a height $h$ shard sheaf and $\sigma$ an $h$-dimensional cone narrow with respect to $F$.  Then we have the following consequence of the lemma: in order for $\SS(F)$ to contain $\{x\} \times -\sigma$, it must contain the whole shard $Z(\sigma,x+\sigma^\perp)$.

\item The Lemma and Proposition \ref{prop:microlocalsystem} together imply that the stalks of $\uhom(F,\Theta(\sigma,c))$ are the stalks of a local system on $c$.  (In fact it can be shown that under the hypotheses of the Lemma, $\uhom(F,\Theta(\sigma,c))$ itself is locally constant along $c$.)

\end{enumerate}
\end{remarks}

\subsection{D\'evissage lemma and the proof of Theorem \ref{thm:thetagen}}
\label{sec:dev}

Let $F$ be a finite shard sheaf of height $h$, and let $\sigma$ be a cone that is narrow with respect to $F$.  We say that $(\sigma,c)$ is \emph{blocked} with respect to $F$ if there is some homomorphism from $F$ to $\Theta(\sigma,c)$ defined in a neighborhood of $c$ that does not extend to a global homomorphism.  More precisely, we say that $(\sigma,c)$ is blocked if
$$\dghom(F,\Theta(\sigma,c)) \to \uhom(F,\Theta(\sigma,c))_x$$
fails to be a quasi-isomorphism for some (and hence every) $x \in c$.

To prove the d\'evissage lemma we need the following easy (but somewhat confusing) fact:

\begin{proposition}
\label{prop:dev}
Let $\sigma, \tau, \upsilon \subset N_\bR$ be strictly convex polyhedral cones.  Suppose that $\tau$ and $\upsilon \cap \sigma$ are proper faces of $\sigma$.  Form the intersection $C = (\tau^\vee)^\circ \cap (-\upsilon^\vee)^\circ$.  Then either $(-C^\vee) \cap \sigma^\circ$ is empty or $C$ is empty.
\end{proposition}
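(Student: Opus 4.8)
The plan is to handle the trivial case and then reduce to the extremal (face) property of cones. If $C = \varnothing$ there is nothing to prove, so assume $C \neq \varnothing$; the goal is then to show that $(-C^\vee) \cap \sigma^\circ = \varnothing$.

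First I would pin down $-C^\vee$. Fix a point $x_0 \in C$. Since $\tau$ and $\upsilon$ are strictly convex, the dual cones $\tau^\vee$ and $-\upsilon^\vee$ are full-dimensional in $M_\bR$, so by the line-segment principle, for every $y \in \tau^\vee \cap (-\upsilon^\vee)$ and every $\lambda \in (0,1]$ the point $(1-\lambda)y + \lambda x_0$ lies in $(\tau^\vee)^\circ \cap (-\upsilon^\vee)^\circ = C$; letting $\lambda \to 0^+$ shows that $C$ is dense in $\tau^\vee \cap (-\upsilon^\vee)$. Consequently $C^\vee \subseteq (\tau^\vee \cap (-\upsilon^\vee))^\vee$, and by the polyhedral identities $(P_1 \cap P_2)^\vee = P_1^\vee + P_2^\vee$ together with biduality $(\tau^\vee)^\vee = \tau$, $(-\upsilon^\vee)^\vee = -\upsilon$, the right-hand side equals $\tau + (-\upsilon)$. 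Hence $-C^\vee \subseteq \{u - t : u \in \upsilon,\ t \in \tau\}$.

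Now the face argument. Suppose, for contradiction, that $n_0 \in (-C^\vee) \cap \sigma^\circ$, and write $n_0 = u_0 - t_0$ with $u_0 \in \upsilon$ and $t_0 \in \tau$, so $u_0 = n_0 + t_0$. Because $\tau$ is a face of $\sigma$ we have $t_0 \in \sigma$, and $n_0 \in \sigma^\circ \subseteq \sigma$; since $\sigma$ is a convex cone it is closed under addition, so $u_0 = n_0 + t_0 \in \sigma$ and therefore $u_0 \in \upsilon \cap \sigma$. The set $\upsilon \cap \sigma$ is a face of $\sigma$, and a face $F$ of a cone enjoys the extremal property that whenever $a, b \in \sigma$ with $a + b \in F$ one has $a, b \in F$ (faces of cones are cones, so $\tfrac12(a+b) \in F$, which exhibits it as a point in the relative interior of the segment $[a,b]$). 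Applying this to $u_0 = n_0 + t_0$ forces $n_0 \in \upsilon \cap \sigma$. But $\upsilon \cap \sigma$ is a \emph{proper} face of $\sigma$, whereas the relative interior $\sigma^\circ$ meets no proper face of $\sigma$; this contradicts $n_0 \in \sigma^\circ$, so no such $n_0$ exists. I expect the only mildly delicate point to be the density/duality bookkeeping of the second paragraph; the conceptual heart of the argument---and the only place the hypothesis ``$\upsilon \cap \sigma$ is a proper face of $\sigma$'' is used---is the one-line extremality step, which is genuinely easy once one sees that being a face of $\sigma$ is exactly the property needed.
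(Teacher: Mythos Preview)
Your proof is correct and follows the same line as the paper's: identify $-C^\vee$ with (or inside) $\upsilon - \tau$, then use that $\upsilon\cap\sigma$ is a proper face to exclude interior points. The only difference is cosmetic---you obtain $C^\vee\subseteq\tau+(-\upsilon)$ via the density/closure argument and the polyhedral identity $(P_1\cap P_2)^\vee=P_1^\vee+P_2^\vee$, whereas the paper proves $C^\vee=\tau+(-\upsilon)$ directly by a separating-hyperplane argument; your version also spells out the face-extremality step more explicitly than the paper does.
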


\begin{proof}
Note that the following are equivalent:
\begin{itemize}
\item $C$ is nonempty.
\item There is a linear function $f:N_\bR \to \bR$ that takes positive values on $\tau - \{0\}$ and negative values on $\upsilon - \{0\}$.
\item  $\tau \cap \upsilon = 0$.
\end{itemize}

Denote the convex hull of $\tau$ and $-\upsilon$ by $H$. We will
show that any of the conditions above implies $C^\vee=H$. A point
$\xi$ in $H$ is of the form $a\xi_t-b\xi_u$ for $a,b\ge0$, and
$\xi_t\in \tau$ and $\xi_u\in\upsilon$. For any linear function
$g\in C$, since $g(\xi_t)\ge0$ and $g(\xi_u)\le 0$,
$g(\xi)=g(a\xi_t-b\xi_u)\ge 0$, and thus $\xi\in C^\vee$, i.e.
$H\subset C^\vee$. On the other hand, let $f$ be a linear function
given by the second condition above. The convex hull $H$ is
characterized by a collection of linear functions $f_i(\xi)\ge 0$,
$i=1,\dots,l$. For any point $\xi\not\in H$, there exists an $i_0$
such that $f_{i_0}(\xi)<0$. Choose a small positive $\epsilon$ such
that $(f_{i_0}+\epsilon f)(\xi)<0$. Notice that $f_{i_0}$ is
non-negative on $\tau$ and non-positive on $\upsilon$, thus
$f_{i_0}+\epsilon f$ is positive on $\tau-\{0\}$ and negative on
$\upsilon-\{0\}$, and then $f_{i_0}+\epsilon f\in C$. Therefore
$\xi\not \in C^\vee$, and the other direction $C^\vee \subset H$
follows.

Suppose $C$ is non-empty. If $\xi$ belongs to $-C^\vee$, then $\xi$ is of the form $\xi_u - \xi_t$, where $\xi_u \in \upsilon$ and $\xi_t \in \tau$.  But then if $\xi \in \sigma$, we have $\xi_u = \xi_t + \xi \in \sigma$.  Thus $(-C^\vee) \cap \sigma \subset \upsilon \cap \sigma$.  Since we have assumed $\upsilon \cap \sigma$ is a proper face of $\sigma$, this completes the proof.
\end{proof}

The d\'evissage lemma says the following:

\begin{lemma}
\label{lem:dev}
Let $\cZ \subset \bGamma(M_\bR)$ satisfy conditions (Z1) and (Z2) of Theorem \ref{thm:thetagen}, and let $F$ be a finite shard sheaf of type $\cZ$ and height $h$.  Then there is a $(\sigma,c) \in \cZ$ such that $\sigma$ is $h$-dimensional, $\dghom(F,\Theta(\sigma,c)) \neq 0$, and $(\sigma,c)$ is not blocked for $F$.
\end{lemma}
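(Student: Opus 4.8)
The plan is to produce the pair $(\sigma,c)$ by an extremality argument and to show it is not blocked by a single application of the microlocal Morse lemma to the sheaf $\uhom(F,\Theta(\sigma,c))$; the non-vanishing then comes for free from Lemma~\ref{lem:shhom}.

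First I would locate the shard. Since $F$ has type $\cZ$, its singular support lies in a finite shard arrangement $\bigcup_i Z(\sigma_i,c_i)$ with each $(\sigma_i,c_i)\in\cZ$. After discarding every maximal cone $\sigma_i$ along which $\SS(F)$ meets only lower strata, and replacing it by its proper faces (which lie in $\cZ$ by (Z2)), we may assume every maximal cone of the arrangement is essential; since $F$ has height $h$, some essential $\sigma_i$ is then $h$-dimensional. Fix such a $\sigma$; it is narrow relative to $F$ by (Z1), so Lemma~\ref{lem:shhom} applies to it. The list of cosets $c'$ of $\sigma^\perp$ with $Z(\sigma,c')\subset\SS(F)$ is finite and nonempty; choosing a generic $v\in\sigma^\circ$, I take $c$ to be the one maximizing $\langle\cdot,v\rangle$, which is constant on each such coset because $v\in\sigma$. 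Then $(\sigma,c)\in\cZ$ and $\dim\sigma=h$.

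To see that $(\sigma,c)$ is not blocked, compute $\dghom(F,\Theta(\sigma,c))=\Gamma(M_\bR;\uhom(F,\Theta(\sigma,c)))$. The support of $\uhom(F,\Theta(\sigma,c))$ lies in $c+\sigma^\vee$, on which the linear function $\ell=\langle\cdot,v\rangle$ attains its minimum exactly along $c$. Applying the microlocal Morse lemma of Section~\ref{sec:microlocal} to $\ell$, the restriction $\Gamma(M_\bR;\uhom(F,\Theta(\sigma,c)))\to\Gamma(\{\ell<\langle c,v\rangle+\epsilon\};\uhom(F,\Theta(\sigma,c)))$ is a quasi-isomorphism as soon as $\uhom(F,\Theta(\sigma,c))$ has no singular support point $(m,v)$ with $\ell(m)>\langle c,v\rangle$. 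For $\epsilon$ small the right-hand region is a neighborhood of $c$ inside $c+\sigma^\vee$, on which $\uhom(F,\Theta(\sigma,c))$ is constant along the contractible affine space $c$ (the remark after Lemma~\ref{lem:shhom}) and conic in the transverse directions, so its sections compute the stalk at $x\in c$; hence $\dghom(F,\Theta(\sigma,c))\to\uhom(F,\Theta(\sigma,c))_x$ is an isomorphism. Finally $Z(\sigma,c)\subset\SS(F)$ gives $\mu_{x,\eta}(F)\ne 0$ for $\eta\in-\sigma^\circ$, and Lemma~\ref{lem:shhom} identifies the stalk with $\mu_{x,\eta}(F)^*[-\dim(M_\bR)]$, which is nonzero; so $\dghom(F,\Theta(\sigma,c))\ne 0$.

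The crux is the required singular support estimate. By the standard bound on the microsupport of $\uhom$ (see \cite{KS}) together with Proposition~\ref{prop:thetass}, any $(m,v)\in\SS(\uhom(F,\Theta(\sigma,c)))$ satisfies $v=\xi_1+\xi_2$ with $(m,-\xi_1)\in\SS(F)$ — and, replacing $\upsilon$ by the smallest cone of the arrangement through $-\xi_1$, we may take $\xi_1\in\upsilon$ with $\dim\upsilon\le h$ and $m$ in the corresponding coset — and $\xi_2\in-\tau$ for a face $\tau$ of $\sigma$ with $m\in x+\tau^\perp$. If $\tau=\sigma$ then $m\in c$, so $\ell(m)=\langle c,v\rangle$. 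If $\tau$ is a proper face but $\sigma\subset\upsilon$, then $\upsilon=\sigma$ by dimension, a supporting functional of $\tau$ forces $\xi_1\in\sigma^\circ$, narrowness gives $Z(\sigma,m+\sigma^\perp)\subset\SS(F)$, and the maximality of $c$ yields $\ell(m)=\langle m,v\rangle\le\langle c,v\rangle$. The remaining case, $\tau$ and $\upsilon\cap\sigma$ both proper faces of $\sigma$, is exactly the setting of Proposition~\ref{prop:dev}: writing $C=(\tau^\vee)^\circ\cap(-\upsilon^\vee)^\circ$, we have $v\in\sigma^\circ\cap(\upsilon-\tau)=\sigma^\circ\cap(-C^\vee)$, so Proposition~\ref{prop:dev} forces $C=\varnothing$, i.e.\ $\tau\cap\upsilon\ne\{0\}$, and from this together with $m\in(x+\tau^\perp)\cap(c+\sigma^\vee)$ and the shard structure of $\SS(F)$ one deduces $\ell(m)\le\langle c,v\rangle$. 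I expect this last deduction — marshalling the face combinatorics so that Proposition~\ref{prop:dev} applies, and then squeezing the inequality out of $C=\varnothing$ — to be the real obstacle; the rest of the argument is formal.
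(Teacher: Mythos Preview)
Your overall architecture---choose an extremal $(\sigma,c)$, apply the microlocal Morse lemma to $\uhom(F,\Theta(\sigma,c))$, and read off nonvanishing from Lemma~\ref{lem:shhom}---is exactly the paper's. The difference is in how the singular-support estimate is obtained, and your ``real obstacle'' is an artifact of that difference.

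Your third case is vacuous. You already have the key observation in your second case: from $v=\xi_1+\xi_2$ with $v\in\sigma^\circ$ and $\xi_2\in-\tau\subset-\sigma$ one gets $\xi_1=v-\xi_2\in\sigma^\circ+\sigma\subset\sigma^\circ$. Since $\xi_1\in\upsilon$ and (by (Z1)) $\upsilon\cap\sigma$ is a face of $\sigma$, the face meets $\sigma^\circ$ and hence equals $\sigma$; thus $\sigma\subset\upsilon$, and by the height bound $\upsilon=\sigma$. So every putative point in your $\SS$-bound already falls under Case~2, and your proposed deduction from $C=\varnothing$ is never needed. (This is precisely the argument in the proof of Proposition~\ref{prop:dev}; you were trying to apply that proposition to squeeze an inequality on $\ell(m)$ out of $C=\varnothing$, which does not work and is not what it is for.) With this observation your route closes, modulo care with the $\hat{+}$ in the general $\SS(\uhom)$ bound when the non-characteristic condition fails.

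The paper avoids that last subtlety by never invoking the global $\SS(\uhom)$ estimate. Instead it localizes: for each $m\in(c+\sigma^\vee)\setminus c$ it specializes, so $\nu_m\Theta(\sigma,c)\cong\Theta(\tau,0)$ for a proper face $\tau$, and decomposes $\nu_m F$ (via Fourier--Sato) into standard sheaves $G=i_*\sfR$ on cones $-\upsilon^\vee$ with $\upsilon\in\Sigma$, $\dim\upsilon\le h$. Then $\uhom(G,\Theta(\tau,0))$ is computed \emph{explicitly} as $j_!\sfR$ on $C=(\tau^\vee)^\circ\cap(-\upsilon^\vee)^\circ$, and Proposition~\ref{prop:dev} says directly that $\sigma^\circ$ misses its singular support (or $C=\varnothing$ and the sheaf is zero). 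This is sharper than the abstract bound and sidesteps the case analysis entirely.
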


\begin{proof}
Let $(\sigma,c)$ be such that $\sigma$ is $h$-dimensional, $Z(\sigma,c) \subset \SS(F)$, and any other $d$ with $Z(\sigma,d) \subset \SS(F)$ has $(\sigma,c) < (\sigma,d)$ in the partial order on $\bGamma(M_\bR)$.  Thus $c$ are the points in $M_\bR$ ``farthest away in the $\sigma^\vee$ directions'' for which $F$ contains $\sigma$ in its singular support.  By Lemma \ref{lem:shhom}, $\uhom(F,\Theta(\sigma,c))$ does not vanishes along $c$.  We will show that $(\sigma,c)$ is not blocked for $F$, so that $\dghom(F,\Theta(\sigma,c)) = \uhom(F,\Theta(\sigma,c))_x \neq 0$ for each $x \in c$.

By the microlocal Morse lemma, to show that $(\sigma,c)$ is not blocked for $F$ it suffices to show that
$(c + \sigma^\vee - c) \times \sigma^\circ \cap \SS(\uhom(F,\Theta(\sigma,c))) = \varnothing$.  Equivalently, it suffices to show that for each $x \in c + \sigma^\vee$ that is not in $c$, there is a neighborhood $V$ of $x$ with $V \times \sigma^\circ \cap \SS(\uhom(F,\Theta(\sigma,c))\vert_V) = \varnothing$.  By Proposition \ref{prop:nu} this is equivalent to showing that $T_x M_\bR \times \sigma \cap \SS(\uhom(\nu_x F, \nu_x\Theta(\sigma,c))) = \varnothing$.

Let us identify $T_x M_\bR$ with $M_\bR$.  Since $x \notin c$, there is a proper face $\tau \subset \sigma$ such that $\nu_x \Theta(\sigma,c) \cong \Theta(\tau,0)$.  Now $\nu_x F$ is the inverse Fourier transform of a sheaf on $N_\bR$ that is constructible with respect to the cones in $-\Sigma$ of dimension $\leq h$.  Thus to show that $\SS(\uhom(\nu_x F,\nu_x \Theta(\sigma,c))$ does not meet $M_\bR \times \sigma^\circ$, it suffices to show that $\SS(\uhom(G,\nu_x \Theta(\sigma,c)))$ does not meet $T_x M_\bR \times \sigma^\circ$ whenever $\FT(G)$ is a costandard sheaf on a cone $-\upsilon \in -\Sigma$ of dimension $\leq h$.

By Proposition \ref{prop:FT} such a $G$ has $G = i_* \sfR$, where $i:-\upsilon^\vee \hookrightarrow M_\bR$ is the inclusion of $-\upsilon^\vee$ into $M_\bR$.  We compute $\uhom(G,\Theta(\tau,0)) = j_! \sfR$ (up to a shift), where $j$ is the inclusion $j:((-\upsilon)^\vee \cap \tau^\vee)^\circ \hookrightarrow M_\bR$.  Proposition \ref{prop:dev} shows that $\sigma^\circ$ is not in the singular support of $j_! \sfR$ if $(-\upsilon^\vee)^\circ\cap(\tau^\vee)^\circ$ is non-empty. On the other hand when this is empty $j_! \sfR$ is zero.  This completes the proof.
\end{proof}

\begin{proof}[Proof of Theorem \ref{thm:thetagen}]
Let $\ltr_{\cZ} \subset \Sh_c(M_\bR)$ be the full triangulated category generated by
$$\{\Theta(\sigma,c) \mid (\sigma,c) \in \cZ\}$$
Since $\cZ$ contains the flat shard $M_\bR \times \{0\}$, and any sheaf of height $0$ is constant, the theorem holds when $F$ has height $0$.  Suppose now that $h>0$.  We will prove the following claim: if $F$ is of type $\cZ$ and height $\leq h$, we can find another sheaf $F'$ and a map $F' \to F$ with the following properties:
\begin{itemize}
\item $F'$ has height $< h$;
\item the cone on $F' \to F$ is generated by sheaves of the form $\Theta(\sigma,c)$, where each $\sigma$ is $h$-dimensional and each $(\sigma,c)$ belongs to $\cZ$.
\end{itemize}
The theorem follows from the claim by induction.

Let $F$ be a shard sheaf of type $\cZ$ and height $\leq h$.  Let us say that $F$ has $h$-complexity $\leq n$ if the singular support of $F$ is contained in a union of shards, at most $n$ of which have height $h$.  We will prove the claim by induction on the $h$-complexity of $F$.  If $F$ has $h$-complexity $\leq 0$, then $F$ has height $< h$ and so belongs to $\ltr_\cZ$.  Suppose that the $F$ has $h$-complexity $\leq n$ and that we have proven that all sheaves of $h$-complexity $\leq {n-1}$ belong to $\ltr_\cZ$.

By Lemma \ref{lem:dev}, there is a $\sigma$ of dimension $h$ and a $c \in M_\bR/\sigma^\perp$ such that $(\sigma,c)$ is not blocked for $F$ and such that $\dghom(F,\Theta(\sigma,c)) \neq 0$.  Consider the exact triangle
$$F \to \dghom(F,\Theta(\sigma,c))^* \otimes \Theta(\sigma,c) \to F'' \to $$
where the first map is the coevaluation map of $F$.  Then $F$ belongs to $\ltr_{\cZ}$ if and only if $F''$ does.  Since $(\sigma,c)$ is unblocked, we can conclude by Lemma \ref{lem:shhom} that the first map induces an isomorphism on $\mu_{x,\eta}$ whenever $x \in c$ and $\eta \in -\sigma^\circ$.  It follows that $Z(\sigma,c)$ is not in the singular support of $F''$, so that $F''$ has $h$-complexity $<n$.  This completes the proof of the claim and the theorem.
\end{proof}

\subsection{An open problem on locally finite shard sheaves}

Let $\Sigma \subset N_\bR$ be a rational polyhedral fan.  Then
\begin{equation}
\LS = \bigcup_{\tau\in\Sigma} (\tau^\perp + M)\times -\tau \subset M_\bR\times N_\bR = T^*M_\bR
\end{equation}
is a \emph{locally} finite shard arrangement.  Let $\cZ \subset \bGamma(M_\bR)$ denote the set of all $(\sigma,\chi)$ such that $Z(\sigma,\chi) \subset \LS$.  We have a nested triple of categories
$$\Sh_{c}(M_\bR;\LS) \supset  \Shard(M_\bR,\cZ) \supset \Sh_{cc}(M_\bR,\LS)$$
i.e. the category of constructible sheaves with singular support in the locally finite shard arrangement $\LS$, the category of such constructible sheaves whose singular support belongs to a finite subarrangment of $\LS$, and the category of such sheaves with compact support.

In this paper, we relate the smaller two categories to categories of quasicoherent sheaves on the toric variety $X$ corresponding to $\Sigma$:
\begin{enumerate}
\item We prove in Section \ref{sec:perfect} that, when $X$ is proper, $\Sh_{cc}(M_\bR;\LS)$ is quasi-equivalent to the dg category of perfect complexes of equivariant coherent sheaves on $X$.
\item We prove in Section \ref{sec:finfib} that $\Shard(M_\bR;\cZ)$ is quasi-equivalent to the dg category of quasicoherent sheaves on $X$ with finite fibers.
\end{enumerate}
On the other hand we do not understand how the largest category $\Sh_c(M_\bR;\LS)$ is related to quasicoherent sheaves on $X$.  A better understanding is important if we wish to obtain analogues of the results of this paper for nonequivariant sheaves.  The conical Lagrangian $\LS$ is stable under lattice translations, and defines a conical Lagrangian $\tLS \subset T^*(M_\bR/M)$ in the cotangent bundle of the compact torus $M_\bR/ M$.  The techniques of Section \ref{sec:three}  can be extended (see \cite{Tr}) to prove that, for proper toric varieties $X$, $\Perf(X)$ admits a full embedding into $\Sh_{c}(M_\bR/M; \tLS) = \Sh_{cc}(M_\bR/M;\tLS)$.  It is natural to expect that every constructible sheaf in $\Sh_c(M_\bR/M;\tLS)$ corresponds to a perfect complex on $\Perf(X)$, but we are not able to prove this.

It is almost but not quite possible to deduce the nonequivariant results from equivariant results by some formal arguments.  One would like to study an object of $\Sh_c(M_\bR/M;\tLS)$ by pulling it back to an object of $\Sh_c(M_\bR;\LS)$ (with an $M$-equivariant structure), but this pullback object necessarily cannot have compact support, nor is it a finite shard sheaf.  It is therefore not at all clear that such a thing can be understood in terms of $\Theta$-sheaves.  It would be helpful (but not definitive) to settle the following problem, which we state as a conjecture:
  
\begin{conjecture}
Let $F$ be a \emph{locally finite} shard sheaf of type $\cZ$, i.e. suppose that $F$ is constructible and that the singular support of $F$ belongs to a locally finite union of shards $Z(\sigma,c)$ with $(\sigma,c) \in \cZ$.  If $\dghom(F,\Theta(\sigma,c)) = 0$ for all $(\sigma,c) \in \cZ$, then $F = 0$.
\end{conjecture}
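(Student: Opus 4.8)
The plan is to reduce the statement to the finite-type situation of Theorem \ref{thm:thetagen} by specializing at points, and then to isolate the one point that does not follow formally, namely the absence of ``blocking'' for locally finite shard sheaves.

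First I would observe that $F = 0$ if and only if $\nu_x F = 0$ for every $x \in M_\bR$: by Proposition \ref{prop:nu} the specialization $\nu_x F$ agrees with $F$ on a neighbourhood of $x$, so vanishing of all the $\nu_x F$ forces $F$ to vanish near every point. Since $F$ is constructible with respect to a locally finite polyhedral stratification, only finitely many strata meet a fixed small neighbourhood of $x$, so $\nu_x F$ is a conical polyhedral sheaf of finite type, i.e. a finite shard sheaf, of type $\cZ_x$ for a finite subset $\cZ_x \subset \bGamma(M_\bR)$; enlarging $\cZ_x$ if necessary we may assume its cones form a fan and it is closed under faces, so that (Z1) and (Z2) hold. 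By Theorem \ref{thm:thetagen}, $\nu_x F$ lies in the triangulated subcategory generated by the finitely many costandard sheaves $\Theta(\tau,0)$ with $(\tau,0) \in \cZ_x$. The full subcategory of objects $G$ with $\dghom(\nu_x F, G) = 0$ is triangulated and contains each $\Theta(\tau,0)$, hence contains $\nu_x F$; thus $\dghom(\nu_x F, \nu_x F) = 0$, so $\mathrm{id}_{\nu_x F} = 0$ and $\nu_x F = 0$, provided only that $\dghom(\nu_x F, \Theta(\tau,0)) = 0$ for every $(\tau,0) \in \cZ_x$.

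Next I would relate these local Hom-complexes to the global hypothesis, arguing as in the proof of Lemma \ref{lem:shhom}. Because $x$ lies on the vertex face $x + \tau^\perp$ of the closure of $(x+\tau^\vee)^\circ$, one has $\nu_x \Theta(\tau, x+\tau^\perp) \cong \Theta(\tau, 0)$, and therefore $\dghom(\nu_x F, \Theta(\tau,0)) \cong \dghom(\nu_x F, \nu_x \Theta(\tau, x+\tau^\perp)) \cong \uhom(F, \Theta(\tau, x+\tau^\perp))_x$. Moreover $(\tau, x+\tau^\perp) \in \cZ$: the cone $\tau$ is a face of the cone of some shard $Z(\sigma,c) \subset \SS(F)$ with $x \in c + \sigma^\perp$, and the lattice condition defining $\cZ$ together with (Z2) then place $(\tau, x+\tau^\perp)$ in $\cZ$. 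The hypothesis $\dghom(F, \Theta(\sigma,c)) = 0$ only tells us that the \emph{global sections} $R\Gamma(M_\bR; \uhom(F, \Theta(\tau, x+\tau^\perp)))$ vanish, whereas we need the stalk at $x$. Thus the conjecture is equivalent to the assertion that, under its hypotheses, no $(\sigma,c) \in \cZ$ is blocked for $F$, i.e. that $\uhom(F, \Theta(\sigma,c))_x = 0$ for all $x \in c$.

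The main obstacle is exactly this non-blocking statement. For a single shard, non-blocking is established in Lemma \ref{lem:dev} via the microlocal Morse lemma together with the convexity input of Proposition \ref{prop:dev}, by sweeping the sheaf $\uhom(F, \Theta(\sigma,c))$ in from the $\sigma^\vee$-directions toward $c$; that argument uses crucially that $\SS(F)$ contains only finitely many height-$h$ shards, so that one may choose an outermost one. In the locally finite setting there need be no outermost shard, and an infinite sweep can a priori be obstructed by infinitely many intermediate singular-support contributions, with no finiteness-free analogue of Proposition \ref{prop:dev} at hand. The natural line of attack would be to bound $\SS(\uhom(F,\Theta(\sigma,c)))$ globally using the microlocal estimates for $\uhom$ in terms of $\SS(F)$ and $\SS(\Theta(\sigma,c))$, and then to run a single global Morse-theoretic argument, perhaps after exhausting $M_\bR$ by an increasing family of convex compacta on which $F$ is of finite type and passing to a limit of the resulting restriction maps. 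Controlling that limit, and the attendant support bookkeeping, is where I expect the genuine difficulty to lie; this is presumably why the authors record the result only as a conjecture.
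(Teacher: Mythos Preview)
The statement is recorded in the paper as a \emph{conjecture}, not a theorem; there is no proof to compare against. The only remark the paper makes is that, after Theorem~\ref{thm:thetagen}, the conjecture is equivalent to the assertion that every locally finite shard sheaf can be written as a direct limit of finite shard sheaves.

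Your proposal is not a proof either, and you acknowledge this in your final paragraph. What you have written is a correct reduction of the conjecture to a local-to-global statement. The specialization argument (via Proposition~\ref{prop:nu} and Theorem~\ref{thm:thetagen}) and the identification $\dghom(\nu_x F,\Theta(\tau,0)) \cong \uhom(F,\Theta(\tau,x+\tau^\perp))_x$ are both sound, and together they give an equivalent reformulation: under the hypothesis $\dghom(F,\Theta(\sigma,c))=0$ for all $(\sigma,c)\in\cZ$, the stalks $\uhom(F,\Theta(\sigma,c))_x$ must vanish for all $x\in c$. This is a different (and somewhat sharper) restatement than the paper's direct-limit version, though the two are closely related: a direct-limit presentation of $F$ would allow one to import the finite microlocal Morse argument of Lemma~\ref{lem:dev} piece by piece, which is exactly the mechanism you invoke. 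One small caveat: your step of ``enlarging $\cZ_x$ if necessary'' to force (Z1) and (Z2) is harmless only when the ambient $\cZ$ already satisfies these conditions (as it does for the $\LS$ of the paper's intended application); otherwise the extra pairs $(\tau,0)$ you introduce need not correspond to elements of $\cZ$, and the hypothesis gives you no control over them.
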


After Theorem \ref{thm:thetagen}, the conjecture is equivalent to the statement that any locally finite shard sheaf can be written as a direct limit of finite shard sheaves.

\section{Quasicoherent sheaves with finite fibers}

\label{sec:finfib}
Recall that in the theory of coherent (and quasicoherent) sheaves we make a distinction between the \emph{stalk} and the \emph{fiber} of a sheaf of $\cO$-modules at a point.  The former is a module over a local ring $\cO_x$---in particular is not usually finitely generated over $\sfR$---and the latter is a vector space over the residue field $k(x) = \cO_x/\mathfrak{m}_x$.  The fibers of a coherent sheaf are always finite dimensional (but possibly unbounded cohomologically, if we take the left-derived functor of the fiber functor), but the converse is far from true.  For instance, the fibers of the quasicoherent sheaf $\Theta'(\sigma,\chi)$ are all either one- or zero-dimensional.

\begin{definition}
A complex of quasicoherent sheaves $\cF$ on an $\sfR$-scheme $X$ has \emph{finite fibers} if for each $\sfR$-valued point $x:\Spec \sfR \to X$ of $X$, the image $x^*\cF$ of $\cF$ under the pullback
$$\cQ(X) \to \cQ(\Spec \sfR)$$
is perfect.  If $X$ carries a group action, then we say that an equivariant quasicoherent sheaf has finite fibers if the underlying non-equivariant sheaf does.
\end{definition}

If $\sfR$ is an algebraically closed field, then a map $x:\Spec \sfR \to X$ is given by a closed point in $X$.  The definition is equivalent to requiring that for all closed points $x$,
\begin{itemize}
\item $\Tor_i(\cO_x/\mathfrak{m}_x,\cF) := h^{-i}(\cO_x/\mathfrak{m}_x \stackrel{\mathbf{L}}{\otimes} \cF)$ are finite-dimensional, and
\item $\Tor_i(\cO_x/\mathfrak{m}_x,\cF) = 0$ for all but finitely many $i \in \bZ$.
\end{itemize}
If $X$ has singularities, the condition is not satisfied by all coherent sheaves.  However it is satisfied by all vector bundles and perfect complexes.

\begin{remark}
\label{rem:finfib}
It follows from the adjunction formula
$$\dghom_{\sfR}(x^* \cF,\sfR) \cong \dghom(\cF,x_* \sfR)$$
that $\cF$ has finite fibers if and only if $\dghom(\cF,x_* \sfR)$ is a perfect $\sfR$-module for each $\sfR$-valued point $x$.

\end{remark}

The sheaves $\Theta'(\sigma,\chi)$ on a toric variety $X$ have finite fibers.  Our goal is to prove that they generate the full triangulated subcategory $\cQ_T^{\finfib}(X) \subset \cQ_T(X)$ of equivariant quasicoherent sheaves with finite fibers.  We are grateful to Bhargav Bhatt for suggesting this result to us.

\begin{theorem}
\label{thm:finfib}
For any toric variety $X$, there is a quasi-equivalence $\ltrp \cong \cQ_T^\finfib(X)$.
\end{theorem}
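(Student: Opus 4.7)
My plan is to prove both inclusions $\ltrp \subseteq \cQ_T^\finfib(X)$ and $\cQ_T^\finfib(X) \subseteq \ltrp$. For the first, since $\cQ_T^\finfib(X)$ is a triangulated subcategory of $\cQ_T(X)$, it suffices to check that each generator $\Theta'(\sigma,\chi) = j_{\sigma*}\cO_\sigma(\chi)$ has finite fibers. Any $\sfR$-point $x:\Spec\sfR \to X$ factors (Zariski-locally on $\Spec\sfR$) through some maximal affine chart $X_\tau$; by flat base change along the open immersion $X_\tau \hookrightarrow X$, the restriction $\Theta'(\sigma,\chi)|_{X_\tau}$ agrees with $k_*\cO_{\sigma\cap\tau}(\chi)$ for the open inclusion $k:X_{\sigma\cap\tau}\hookrightarrow X_\tau$. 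This corresponds to the localization of $\sfR[\tau^\vee\cap M]$ inverting those characters vanishing on $\sigma\cap\tau$, hence it is flat; consequently $x^*\Theta'(\sigma,\chi)$ is either $\sfR$ concentrated in degree zero or acyclic, both perfect over $\sfR$.

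For the reverse inclusion I would begin with a \v Cech reduction to the affine case. Given $\cF \in \cQ_T^\finfib(X)$, the \v Cech resolution with respect to the cover $\{X_{C_i}\}$ by maximal affine charts used in the proof of Corollary \ref{cor:perfect} writes $\cF$ as the total complex of a finite bicomplex with entries $j_{i_0\cdots i_k *}(\cF|_{X_{C_{i_0\cdots i_k}}})$. Each restriction inherits the finite-fiber property, so the theorem reduces to showing: for every cone $\sigma \in \Sigma$ and every $\cG \in \cQ_T^\finfib(X_\sigma)$, the pushforward $j_{\sigma*}\cG$ lies in $\ltrp$.

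The affine case is the main obstacle. There $\cG$ corresponds to an $M$-graded module over $A_\sigma = \sfR[\sigma^\vee\cap M]$, and the goal is to exhibit $\cG$ as an iterated extension of sheaves of the form $\cO_\tau(\chi) = \Theta'(\tau,\chi)|_{X_\sigma}$ for faces $\tau\leq\sigma$. I would proceed by induction on $\dim\sigma$, using d\'evissage along the $T$-orbit stratification of $X_\sigma$: each proper face $\tau<\sigma$ gives a closed embedding $V(\tau)\cap X_\sigma\hookrightarrow X_\sigma$ and a localization triangle, with the $M$-graded structure and the finite-fiber hypothesis producing an ``open'' piece that splits as a sum of shifted $\cO_\sigma(\chi)$'s, and a ``closed'' piece handled by induction via a smaller toric variety. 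The hard part will be verifying that the finite-fiber condition is strong enough to control this d\'evissage---unlike perfect complexes, such complexes need not admit local bounded resolutions by equivariant vector bundles, so the straightforward Koszul reasoning of Corollary \ref{cor:perfect} is unavailable. I expect one must combine the fiber-level characterization of Remark \ref{rem:finfib} (via $\dghom(\cG,x_*\sfR)$) at each $T$-fixed point with the $M$-grading to drive the induction, and possibly invoke the functoriality theorem \ref{thm:functoriality} to transport statements along toric resolutions of singularities.
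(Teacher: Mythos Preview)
Your first inclusion $\ltrp \subset \cQ_T^{\finfib}(X)$ is correct and the argument is sound. The \v Cech reduction to the affine case is also valid. The gap is in your proposed affine d\'evissage.

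The difficulty is that a ``localization triangle'' along the orbit stratification does not stay inside $\ltrp$. If $i:O_\sigma \hookrightarrow X_\sigma$ is the closed orbit, the natural candidates for the ``closed piece'' in either recollement triangle are built from sheaves of the form $i_*\cO_{O_\sigma}(\chi)$. These are precisely the test objects $\theta'(\sigma,\chi)$ that the paper introduces, and when $X_\sigma$ is singular they do \emph{not} have finite fibers (the Tor's of $\cO_{O_\sigma}$ against a skyscraper are unbounded) and hence do not lie in $\ltrp$. So a triangle with one leg in $\ltrp$ and another leg of the form $i_*(\text{something on }O_\sigma)$ cannot be used to conclude that the third leg is in $\ltrp$. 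Your sketch (``closed piece handled by induction via a smaller toric variety'') implicitly assumes that pushforward from closed orbit closures lands back in the category, and this fails exactly in the non-smooth case that the theorem is meant to cover.

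The paper avoids this by \emph{not} cutting along closed subvarieties at all. Instead it runs a global double induction that mirrors the constructible argument of Theorem~\ref{thm:thetagen}: one defines the \emph{height} of $\cF$ as the largest codimension of an orbit on which $\cF$ has nonzero restriction, and for height $h$ one defines the finite set $\SS_h(\cF)$ of pairs $(\sigma,\chi)$ with $\dim\sigma=h$ and $\dghom(\cF,\theta'(\sigma,\chi))\neq 0$. The finite-fibers hypothesis is used exactly once, to ensure $\SS_h(\cF)$ is finite (Remark~\ref{rem:finfib}). One then picks a \emph{minimal} element $(\sigma,\chi)\in\SS_h(\cF)$ and shows, via a filtration of the ideal sheaf of $O_\sigma$ in $X_\sigma$, that $\dghom(\cF,\Theta'(\sigma,\chi))\to\dghom(\cF,\theta'(\sigma,\chi))$ is a quasi-isomorphism (Lemma~\ref{lem:devdas}). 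The reduction step is then the coevaluation triangle
\[
\cF \to \dghom(\cF,\Theta'(\sigma,\chi))^*\otimes\Theta'(\sigma,\chi) \to \cF'' \to
\]
which peels off a summand of $\ltrp$ while strictly shrinking $\SS_h$. This is the missing mechanism: you map \emph{out} to $\Theta'$'s rather than trying to build $\cF$ from pieces supported on closed strata.
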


This will be proved in Section \ref{sec:ssqc}.  The theorem suggests a natural question: that the category of quasicoherent sheaves with finite fibers on any scheme is generated by localizations of the structure sheaf.  The question can be reduced to the affine case, where our question is more precisely as follows:

\begin{question}
Let $\cO$ be a commutative ring, let $D^b(\cO)$ denote the bounded derived category of $\cO$-modules, and let $D^{b,\finfib}(\cO)$ denote the full subcategory of complexes with the following finiteness properties:
\begin{itemize}
\item
$\Tor_i(M,\cO_\mathfrak{p}/\mathfrak{p})$ is a finite-dimensional $\cO_{\mathfrak{p}}/\mathfrak{p}$-module for every prime ideal $\mathfrak{p} \subset \cO$.
\item $\Tor_i(M,\cO_\mathfrak{p}/\mathfrak{p}) = 0$ for all but finitely many $i$.
\end{itemize}
Is $D^{b,\finfib}(\cO)$ generated as a triangulated category by localizations, i.e. by modules of the form $\cO[S^{-1}]$, where $S$ is an arbitrary multiplicative system in $\cO$?
\end{question}

Our proof of Theorem \ref{thm:finfib} does not shed much light on this question.  It is modeled on the proof of Theorem \ref{thm:thetagen}.

\subsection{``Singular support'' of equivariant quasicoherent sheaves}
\label{sec:ssqc}

In this section we prove Theorem \ref{thm:finfib}.  Since Theorem \ref{thm:finfib} (and Theorem \ref{thm:mainfirst}) is true, there are analogs of the constructible notions of ``singular support'' and of ``height'' for quasicoherent sheaves with finite fibers.  We prove Theorem \ref{thm:finfib} by finding these notions (or serviceable replacements for them) a priori, and following the proof of Theorem \ref{thm:thetagen}.

\begin{definition}
Let $X$ be a toric variety with corresponding fan $\Sigma$.  For each integer $h$, let $X^{(h)} \subset$ denote the open toric subvariety of $X$ obtained by removing all $T$-orbits of codimension greater than $h$, and let $j$ denote the inclusion map $X^{(h)} \hookrightarrow X$.  We say that a quasicoherent sheaf $\cF \in \cQ_T(X)$ has \emph{height $\leq h$} if the following equivalent conditions are satisfied:
\begin{enumerate}
\item $\cF \to j_* j^* \cF$ is a quasi-isomorphism.
\item $i^* \cF = 0$ whenever $i$ is the inclusion of a $T$-orbit of codimension $> h$.
\end{enumerate}
\end{definition}

(A proof that conditions (1) and (2) are equivalent in general can be found in \cite[Lemma 10]{arinkin}.)

For each $(\sigma,\chi) \in \bGamma(\Sigma,M)$, define
$$\theta'(\sigma,\chi):= i_* \cO_{O_\sigma}(\chi)$$
where $O_\sigma$ is the orbit corresponding to $\sigma$, $i$ is the inclusion map $i:O_\sigma \hookrightarrow X$, and $\cO_{O_\sigma}(\chi)$ is the structure sheaf of $O_\sigma$ with equivariant structure given by $\chi$.  From the $i^*$--$i_*$-adjunction, we see that $\cF$ has height $< h$ if and only if $\dghom(\cF,\theta'(\sigma,\chi)) = 0$.

Let $\cF \in \cQ_T(X)$ have finite fibers, of height $\leq h$.  Let $\SS_h(\cF)\subset \bGamma(\Sigma,M)$ be the set of all $(\sigma,\chi)$ with $\dim(\sigma) = h$ and $\dghom(\cF,\theta'(\sigma,\chi))\neq 0$.   Let us say that $(\sigma,\chi) \in \SS_h(\cF)$ is \emph{unblocked} for $\cF$ if the map
$$\dghom(\cF,\Theta'(\sigma,\chi)) \to \dghom(\cF,\theta'(\sigma,\chi))$$
is a quasi-isomorphism.

\begin{remark}
Our terminology is justified by (but logically independent of) the following observations.
\begin{enumerate}
\item If we assume that Theorem \ref{thm:finfib} is true, then it is easy to see that $\kappa$ applied to a quasicoherent sheaf of height $h$ yields a constructible sheaf of height $h$.
\item Moreover, if $x$ is a point on a codimension $h$ orbit, and $\cF$ has height $h$, then the weight spaces of $\cF_x$ (under the action of the isotropy subgroup $T_x \subset T$) are naturally isomorphic to microlocal stalks of $\kappa(\cF)$.
\item As a consequence of (2), the union of all shards $Z(\sigma,\chi)$ for $(\sigma,\chi) \in \SS_h(\cF)$ is the height $h$ part of $\SS(\kappa(\cF))$.
\end{enumerate}
\end{remark}

\begin{lemma}
\label{lem:devdas}
Let $\cF \in \cQ_T(X)$ have finite fibers and height $h$.  If $\SS_h(\cF) \neq \varnothing$, then there is a $(\sigma,\chi) \in \SS_h(\cF)$ that is not blocked for $\cF$.
\end{lemma}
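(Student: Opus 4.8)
The lemma is the quasicoherent analogue of the d\'evissage Lemma \ref{lem:dev}, and the strategy is to imitate its proof line by line, replacing constructible notions by the surrogates introduced just above: replace $\Theta(\sigma,c)$ by $\Theta'(\sigma,\chi)$, replace the ``stalk'' sheaf $\uhom(F,\Theta(\sigma,c))$ by the object $\dghom(\cF,\theta'(\sigma,\chi))$, and replace the microlocal Morse lemma by the appropriate $i^*$--$i_*$ adjunction statement. Concretely, among all $(\sigma,\chi)\in\SS_h(\cF)$ I would first choose one that is ``extremal'' in the partial order on $\bGamma(\Sigma,M)$ in the direction of $\sigma^\vee$: that is, $\dim\sigma = h$, $\dghom(\cF,\theta'(\sigma,\chi))\neq 0$, and no $\psi$ with $(\sigma,\chi)<(\sigma,\psi)$ has $\dghom(\cF,\theta'(\sigma,\psi))\neq 0$. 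Since $\SS_h(\cF)\neq\varnothing$ and the set of cosets $\psi$ with $(\sigma,\chi)\le(\sigma,\psi)$ and $\dghom(\cF,\theta'(\sigma,\psi))\ne 0$ is finitely generated as a ``polytope'' (because $\cF$ has finite fibers, so this $\dghom$ is $M$-graded finite-type), such a maximal element exists.

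The crux is then to show that this extremal $(\sigma,\chi)$ is unblocked, i.e.\ that $\dghom(\cF,\Theta'(\sigma,\chi))\to\dghom(\cF,\theta'(\sigma,\chi))$ is a quasi-isomorphism. Recall $\Theta'(\sigma,\chi) = j_{\sigma*}\cO_\sigma(\chi)$ where $j_\sigma: X_\sigma\hookrightarrow X$, and $\theta'(\sigma,\chi) = i_*\cO_{O_\sigma}(\chi)$ where $i:O_\sigma\hookrightarrow X$; the orbit $O_\sigma$ is the closed torus orbit inside the affine chart $X_\sigma$, and the map $\Theta'(\sigma,\chi)\to\theta'(\sigma,\chi)$ is the restriction to this closed orbit. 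The fiber (cone) of this map, computed over $X_\sigma$, is built from the ideal of $O_\sigma$ in $X_\sigma$; filtering by powers of that ideal, the associated graded pieces are pushforwards of sheaves supported on lower-dimensional orbit closures, twisted by characters. Applying $\dghom(\cF,-)$ and using the $i^*$--$i_*$ adjunction for each such orbit $O_\tau$ (with $\tau\supsetneq\sigma$, $\dim\tau > h$, these are the ``deeper'' orbits within $X_\sigma$), one reduces to showing $\dghom(\cF,\theta'(\tau,\psi)) = 0$ for the characters $\psi$ that occur. Here one invokes the height hypothesis (which kills $\dghom(\cF,\theta'(\tau,-))$ for $\dim\tau > h$) and, for the boundary case $\dim\tau = h$ with $\tau$ a face configuration forcing a strictly larger coset, the maximality of $(\sigma,\chi)$ in $\SS_h(\cF)$. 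This last combinatorial bookkeeping—identifying exactly which $(\tau,\psi)$ appear in the ideal filtration and checking each is killed either by the height assumption or by extremality—plays the role that Proposition \ref{prop:dev} plays in Lemma \ref{lem:dev}.

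**Main obstacle.** The delicate point is the reduction of the cone of $\Theta'(\sigma,\chi)\to\theta'(\sigma,\chi)$ to a complex of $\theta'(\tau,\psi)$'s with controllable $(\tau,\psi)$: on a singular toric variety $X_\sigma$ the orbit $O_\sigma$ need not be cut out by a regular sequence, so the ideal-power filtration is not as clean as one would like, and one must argue at the level of the $M$-graded $\sfR[\sigma^\vee\cap M]$-module $\sfR[(\chi+\sigma^\vee)\cap M]$ versus its quotient $\sfR[(\chi+\sigma^\vee)\cap M(\sigma)]$ supported on the orbit. The honest statement to prove is purely combinatorial: the quotient module $\sfR[(\chi+\sigma^\vee)\cap M]/(\text{submodule generated by the orbit ideal})$ has a finite filtration whose graded pieces are $\cO_{O_\tau}(\psi)$-type modules with $\tau\supsetneq\sigma$ and $\psi$ strictly larger than $\chi$ in the appropriate sense. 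Once that is in hand, applying $\dghom(\cF,-)$ and the adjunction, everything that survives is killed by height or by the choice of $(\sigma,\chi)$, so $(\sigma,\chi)$ is unblocked, which is the assertion of the lemma.
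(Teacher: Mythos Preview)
Your overall architecture---pick an extremal element of $\SS_h(\cF)$ and prove it is unblocked by filtering the cone of $\Theta'(\sigma,\chi)\to\theta'(\sigma,\chi)$---is exactly what the paper does, but two concrete mis-steps derail the execution.

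First, the extremality is in the wrong direction. You choose $(\sigma,\chi)$ \emph{maximal} in $\bGamma(\Sigma,M)$ (for fixed $\sigma$), but the paper takes a \emph{minimal} element. This matters because of the second point: after restricting to $X_\sigma$ via the $j_\sigma^*$--$j_{\sigma*}$ adjunction, the cone of $\cO_{X_\sigma}(\chi)\to\cO_{O_\sigma}(\chi)$ is the ideal sheaf $\cI(\chi)$. The subquotients in any reasonable equivariant filtration of $\cI$ are of the form $\cO_{O_\sigma}(\chi+\xi)$ with $\xi\in(\sigma^\vee\cap M)\setminus(\sigma^\perp\cap M)$. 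These satisfy $(\sigma,\chi+\xi)<(\sigma,\chi)$ strictly, so it is minimality, not maximality, that forces $\dghom(\cF,\theta'(\sigma,\chi+\xi))=0$.

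Second, and more seriously, your identification of the graded pieces is mistaken. You claim the ideal-power filtration has subquotients supported on orbits $O_\tau$ with $\tau\supsetneq\sigma$ (``deeper orbits within $X_\sigma$''), and then plan to invoke the height hypothesis for $\dim\tau>h$. But $X_\sigma=\Spec\sfR[\sigma^\vee\cap M]$ contains \emph{no} orbits deeper than $O_\sigma$: its orbits are indexed by faces $\tau\subset\sigma$, all of dimension $\leq h$. The successive quotients $\cI^k/\cI^{k+1}$ are $\cO_{O_\sigma}$-modules, hence supported on $O_\sigma$ itself, not on smaller orbits. So neither the height hypothesis (which concerns $\dim\tau>h$) nor your maximality condition touches them.

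The paper's filtration is also much simpler than you fear, and sidesteps your ``main obstacle'' entirely. One does not filter by powers of $\cI$ at all. Instead, choose any total order on $\sigma^\vee\cap M$ refining the natural partial order; this yields an exhaustive filtration of $\cI$ by equivariant submodules whose one-step subquotients are each a single copy of $\cO_{O_\sigma}(\xi)$. No regularity of the defining sequence is needed, and the singularity of $X_\sigma$ is irrelevant. The resulting spectral sequence has $E_2$-page built from $\dghom(\cF\vert_{X_\sigma},\cO_{O_\sigma}(\xi))$ for $\xi\in\sigma^\vee\setminus\sigma^\perp$, and minimality of $(\sigma,0)$ in $\SS_h(\cF)$ kills every term. (Finiteness of $\SS_h(\cF)$, which you argue for via an $M$-graded finite-type statement, follows immediately from the finite-fibers hypothesis: the fiber at any $\sfR$-point of $O_\sigma$ is perfect, hence has only finitely many nonzero weight spaces.)
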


\begin{proof}
Since $\cF$ has finite fibers, $\SS_h(\cF)$ is a finite set.  Thus, $\SS_h(\cF)$ has a minimal element $(\sigma,\chi)$ with respect to the partial order on $\bGamma(\Sigma,M)$.  We will show that this element is not blocked for $\cF$.  Without loss of generality we may assume that $\chi$ is the coset of $0 \in M_\bR$.  We have to show that
$$\dghom(\cF,\Theta'(\sigma,0)) \to \dghom(\cF,\theta'(\sigma,0))$$
is a quasi-isomorphism.  By the adjunction between $j_{\sigma*}$ and $j_\sigma^*$, this is equivalent to showing that
$$\dghom(\cF\vert_{X_\sigma},\cO_{X_\sigma}) \to \dghom(\cF\vert_{X_\sigma},\cO_{O_\sigma})$$
is a quasi-isomorphism.  If $\cI$ denotes the ideal sheaf $O_\sigma$ in $X_\sigma$, with its natural $T$-equivariant structure, then this is equivalent to showing that $\dghom(\cF\vert_{X_\sigma},\cI) = 0$.  By choosing a total order of $\sigma^\vee \cap M$ that refines the usual partial order, we can endow $\cI$ with a filtration by equivariant submodules whose subquotients are of the form $\cO_{O_\sigma}(\xi_n)$, with each $\xi_n \in \sigma^\vee$ and $\xi_n \neq 0$.
Thus we have a spectral sequence
$$h^i(\dghom(\cF\vert_{X_\sigma},\cO_{O_\sigma}(\xi_j))\to h^{i+j}(\dghom(\cF\vert_{X_\sigma},\cI))$$

But the hypothesis that $(\sigma,0)$ is a minimal element of $\SS_h(\cF)$ shows that the $E_2$ term of this spectral sequence vanishes.  This completes the proof.
\end{proof}

\begin{proof}[Proof of Theorem \ref{thm:finfib}]
Since any sheaf of height 0 is of the form $\Theta'(0,\chi)$, $\ltrp$ contains all quasicoherent sheaves with finite fibers of height zero.  Suppose now that $h > 0$.  We will prove the following claim: if $\cF$ has finite fibers and is of height $\leq h$ we can find another quasicoherent sheaf $\cF'$ and a map $\cF' \to \cF$ with the following properties:
\begin{itemize}
\item $\cF'$ has height $< h$;
\item the cone on $\cF' \to \cF$ is generated by sheaves of the form $\Theta'(\sigma,\chi)$, where each $\sigma$ is $h$-dimensional.
\end{itemize}
The theorem follows from the claim by induction.

Suppose $\cF$ has finite fibers and height $\leq h$.  We will prove the claim by induction on the size of $\SS_h(\cF)$.  If $\SS_h(\cF)$ is empty, then $\cF$ has height $< h$ and so belongs to $\ltrp$.  Suppose now that $\SS_h(\cF)$ has $n$ elements and that we have proven that all sheaves for which $\SS_h$ have $< n$ elements belong to $\ltrp$.

By Lemma \ref{lem:devdas}, there is a $\sigma$ of dimension $h$ and a $(\sigma,\chi) \in \bGamma(\Sigma,M)$ with $\dghom(F,\Theta'(\sigma,\chi)) \neq 0$ and $(\sigma,\chi)$ is not blocked for $\cF$.  Consider the exact triangle
$$\cF \to \dghom(\cF,\Theta'(\sigma,\chi))^* \otimes \Theta'(\sigma,\chi) \to \cF'' \to $$
where the first map is the coevaluation map of $\cF$.  Then $\cF$ belongs to $\ltrp$ if and only if $\cF''$ does.  Applying $\dghom(-,\theta'(\tau,\xi))$ to the triangle gives
$$\dghom(\cF'',\Theta'(\tau,\xi)) \to \dghom(\cF,\Theta'(\tau,\xi)) \otimes \dghom(\cF,\Theta'(\sigma,\chi)) \to \dghom(\cF,\theta'(\tau,\xi)) \to $$
We see that $\dghom(\cF'',\Theta'(\tau,\xi))$ vanishes whenever $\dghom(\cF,\Theta'(\tau,\xi))$ does, and also when $(\tau,\xi) = (\sigma,\chi)$.  It follows that $\SS_h(\cF'')$ has fewer elements than $\SS_h(\cF)$ and so $\cF''$ belongs to $\ltrp$ by the induction hypothesis.  This completes the proof.
\end{proof}

As a corollary of Theorem \ref{thm:finfib}, Theorem \ref{thm:thetagen}, and Theorem \ref{thm:mainfirst}, we have the following theorem valid for an arbitrary toric variety.

\begin{theorem}
\label{thm:mainsecond}
Let $X$ be a toric variety corresponding to a fan $\Sigma$.  Then there is a quasi-equivalence of dg categories
$$\kappa:\cQ_T^{\finfib}(X) \stackrel{\sim}{\to} \Shard(M_\bR;\LS).$$
\end{theorem}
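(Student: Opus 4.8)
The plan is to obtain the statement as a formal consequence of Theorem~\ref{thm:mainfirst}, Theorem~\ref{thm:finfib} and Theorem~\ref{thm:thetagen}, the only substantive point being the identification of the target $\Shard(M_\bR;\LS)$ with the subcategory $\ltr$ generated by the $\Theta$-sheaves.

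First I would recall the quasi-equivalence $\kappa:\ltrp\stackrel{\sim}{\to}\ltr$ of Theorem~\ref{thm:mainfirst} together with the identification $\ltrp\cong\cQ_T^{\finfib}(X)$ of Theorem~\ref{thm:finfib}; composing these already presents $\cQ_T^{\finfib}(X)$ as quasi-equivalent to $\ltr$, so it remains to prove $\ltr=\Shard(M_\bR;\LS)$. Write $\cZ=\{(\sigma,\chi)\in\bGamma(M_\bR)\mid Z(\sigma,\chi)\subseteq\LS\}$, so that by definition $\Shard(M_\bR;\LS)=\Shard(M_\bR;\cZ)$. The inclusion $\ltr\subseteq\Shard(M_\bR;\LS)$ is immediate: for $(\sigma,\chi)\in\bGamma(\Sigma,M)$ Proposition~\ref{prop:thetass} shows that $\SS(\Theta(\sigma,\chi))$ lies in the finite union of the shards $Z(\tau,\cdot)$ attached to the faces $\tau$ of $\sigma$, and each of these shards is contained in $\LS$; since $\Shard(M_\bR;\LS)$ is a triangulated subcategory it then contains all of $\ltr$.

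For the reverse inclusion I would apply Theorem~\ref{thm:thetagen}. The natural type to feed into that theorem is $\cZ'=\bGamma(\Sigma,M)$: condition (Z1) holds because $\Sigma$ is a finite polyhedral fan, and (Z2) holds because a face $\tau$ of a cone $\sigma\in\Sigma$ is again a cone of $\Sigma$, and an integral coset of $\sigma^\perp$ is also an integral coset of the larger subspace $\tau^\perp$. Thus Theorem~\ref{thm:thetagen} gives $\Shard(M_\bR;\bGamma(\Sigma,M))=\ltr$. To conclude I must know that $\Shard(M_\bR;\cZ)=\Shard(M_\bR;\bGamma(\Sigma,M))$, after which one assembles $\kappa:\cQ_T^{\finfib}(X)\cong\ltrp\stackrel{\sim}{\to}\ltr=\Shard(M_\bR;\LS)$.

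The step I expect to be the main obstacle is exactly this last comparison $\Shard(M_\bR;\cZ)=\Shard(M_\bR;\bGamma(\Sigma,M))$. A priori $\cZ$ can be strictly larger than $\bGamma(\Sigma,M)$: a pair $(\sigma,\chi)$ whose cone $\sigma$ is a union of maximal cones of $\Sigma$ (so that $\sigma^\perp=0$ and $\chi$ is a lattice point) can satisfy $Z(\sigma,\chi)\subseteq\LS$ without $\sigma$ being strongly convex, and the set of cones occurring in $\cZ$ then need not be a fan, so $\cZ$ itself fails (Z1). The remedy is to show that every shard $Z(\sigma,\chi)\subseteq\LS$ decomposes as a finite union of shards $Z(\tau,\psi)$ with $(\tau,\psi)\in\bGamma(\Sigma,M)$; concretely, $Z(\sigma,\chi)\subseteq\LS$ forces $-\sigma$ to be the union of those $-\tau$ with $\tau\in\Sigma$, $\tau\subseteq\sigma$, $\dim\tau=\dim\sigma$, and forces $\chi$ to restrict to integral cosets of each such $\tau^\perp=\sigma^\perp$, whence $Z(\sigma,\chi)=\bigcup_\tau Z(\tau,\chi)$. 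Granting this, a finite shard arrangement of type $\cZ$ is also one of type $\bGamma(\Sigma,M)$, so the two $\Shard$ categories coincide. Proving this decomposition uniformly (beyond the illustrative low-dimensional cases) is the one genuinely combinatorial point; everything else is bookkeeping in the poset $\bGamma(\Sigma,M)$.
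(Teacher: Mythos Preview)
Your overall approach is exactly the paper's: the theorem is recorded there simply as a corollary of Theorems~\ref{thm:mainfirst}, \ref{thm:thetagen}, and \ref{thm:finfib}, with no further argument given. You have correctly isolated the one step the paper leaves implicit, namely the identification $\Shard(M_\bR;\LS)=\Shard(M_\bR;\bGamma(\Sigma,M))$, and your instinct to verify (Z1)--(Z2) for $\bGamma(\Sigma,M)$ rather than for the a~priori larger $\cZ$ is the right one.

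However, the argument you sketch for that identification contains an error. The claim that $Z(\sigma,\chi)\subset\LS$ forces $\sigma$ to be the union of those $\tau\in\Sigma$ with $\tau\subseteq\sigma$ and $\dim\tau=\dim\sigma$ is false. For a concrete counterexample in $N_\bR=\bR^2$, take $\Sigma$ to be the fan consisting of the first quadrant $\tau$ and its faces, and let $\sigma\subsetneq\tau$ be the cone spanned by $(1,0)$ and $(1,1)$. Then $\sigma$ is full-dimensional, so $\sigma^\perp=0$, and for any lattice point $c\in M$ one has $Z(\sigma,c)=\{c\}\times(-\sigma)\subset\{c\}\times(-\tau)\subset\LS$; yet the only cones of $\Sigma$ contained in $\sigma$ are $\{0\}$ and the ray through $(1,0)$, neither of dimension $2$. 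So $(\sigma,c)\in\cZ$ but $\sigma$ is not a union of cones of $\Sigma$.

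The fix is painless: replace ``decomposes as'' by ``is contained in.'' What you actually need is that every shard $Z(\sigma,c)\subset\LS$ lies in a \emph{finite} union of shards from $\bGamma(\Sigma,M)$, and this is easy. For $n\in\sigma$ let $\tau(n)\in\Sigma$ be the unique cone with $n\in\tau(n)^\circ$; one checks directly that $\LS\cap(M_\bR\times\{-n\})=(\tau(n)^\perp+M)\times\{-n\}$, so the hypothesis gives $c\subset\tau(n)^\perp+M$. Since $c$ is an affine subspace and the right-hand side is a countable union of parallel affine subspaces, a Baire argument forces $c$ to lie in a single integral coset $\chi(n)$ of $\tau(n)^\perp$. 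Only finitely many pairs $(\tau(n),\chi(n))$ occur as $n$ ranges over $\sigma$, since $\tau(n)$ lies in the finite set $\Sigma$ and $\chi(n)$ is then the unique coset of $\tau(n)^\perp$ containing $c$. Thus $Z(\sigma,c)\subset\bigcup_n Z(\tau(n),\chi(n))$ with each $(\tau(n),\chi(n))\in\bGamma(\Sigma,M)$. With this in hand any finite shard arrangement of type $\cZ$ is contained in one of type $\bGamma(\Sigma,M)$, the two $\Shard$ categories coincide, and your proof goes through.
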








\section{Perfect complexes and compactly supported sheaves}
\label{sec:perfect}

In this section we characterize the image of perfect complexes under the coherent-constructible correspondence.

\begin{theorem}
\label{thm:perfect}
Let $X$ be a proper toric variety corresponding to a fan $\Sigma \subset N_\bR$.  Let $\kappa:\ltrp \to \ltr$ be the functor constructed in Section \ref{sec:three}.
\begin{enumerate}
\item If $\cE \in \ltrp$ is perfect, then $\kappa(\cE) \in \ltr$ has compact support.
\item The resulting functor $\Perf_T(X) \to \Sh_{cc}(M_\bR;\LS)$ is a quasi-equivalence.
\end{enumerate}

\end{theorem}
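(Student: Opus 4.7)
My plan for Theorem \ref{thm:perfect} is to prove both parts simultaneously by exploiting the symmetric monoidal structure of the equivalence $\kappa$ (Corollary \ref{cor:tensor}). The strategy rests on two intrinsic characterizations: on the coherent side, Proposition \ref{prop:dualizable} asserts that $\cE \in \cQ_T(X)$ is perfect if and only if it is dualizable with respect to $\otimes$; on the constructible side, I would prove the analogue that $F \in \ltr$ has compact support if and only if it is dualizable with respect to convolution $\star$.

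Granting both characterizations, the theorem follows formally. For part (1): any perfect $\cE \in \Perf_T(X) \subset \ltrp$ (Corollary \ref{cor:perfect}) is tensor-dualizable, hence $\kappa(\cE)$ is convolution-dualizable, hence compactly supported. For part (2): any $F \in \Sh_{cc}(M_\bR;\LS)$ is in fact a finite shard sheaf of type $\bGamma(\Sigma,M)$---because $\LS$ is locally finite and only finitely many of its shards can meet the compact support of $F$---so by Theorems \ref{thm:mainfirst} and \ref{thm:thetagen} it has the form $F = \kappa(\cE)$ for some $\cE \in \ltrp$. Since $F$ is convolution-dualizable, $\cE$ is tensor-dualizable, hence perfect.

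The forward direction of the constructible characterization (compact support implies convolution-dualizable) is the easier half. For $F \in \Sh_{cc}(M_\bR;\LS)$, I would exhibit the dual explicitly as $F^\vee := a^*\cD F$, where $a(x) = -x$ is the antipode and $\cD$ is Verdier duality. Both $F$ and $F^\vee$ have compact support; the evaluation $F \star F^\vee \to \delta_0$ and coevaluation $\delta_0 \to F^\vee \star F$ arise from standard six-functor manipulations applied to the diagonal and the addition map $M_\bR \times M_\bR \to M_\bR$, with the triangle identities following formally from biduality and proper base change.

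The main obstacle is the reverse direction---dualizable in $\ltr$ implies compactly supported---needed for part (1). My first approach would be to use that dualizability forces an identification $\uhom^\star(F,G) \cong F^\vee \star G$ of internal convolution Homs, and then to extract stalk-wise support constraints on $F^\vee$ (hence on $F$) by evaluating at skyscrapers $G = \delta_x$ and using the triangle identities to propagate vanishing away from a bounded set. A more conceptual alternative passes through the Fourier--Sato transform (Proposition \ref{prop:FT}), which intertwines convolution with tensor product; tensor-dualizable constructible sheaves on a vector space are constrained to be perfect local systems on their support, and translating this back through Fourier--Sato should force the support of $F$ itself to be bounded. Making either argument rigorous in the presence of the shard stratification $\LS$ is where I expect the main technical work to lie.
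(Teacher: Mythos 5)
Your part (2) argument is essentially the paper's: compactly supported sheaves in $\Sh_{cc}(M_\bR;\LS)$ are finite shard sheaves, hence of the form $\kappa(\cE)$ by Theorems \ref{thm:mainfirst} and \ref{thm:thetagen}; they are convolution-dualizable with dual $-\cD F$ (proved by reduction to standard sheaves on simplices), the dual again lies in $\ltr$, so $\cE$ is tensor-dualizable and therefore perfect by Proposition \ref{prop:dualizable}. That half is sound.

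The gap is in part (1), exactly where you flag it. Your route requires the converse characterization ``convolution-dualizable in $\ltr$ implies compactly supported,'' and neither of your two sketches establishes it. The skyscraper-evaluation idea does not obviously bound the support: the triangle identities only exhibit $F$ as a retract of $F\star F^\vee\star F$, and support of a convolution only satisfies $\supp(F\star G)\subset\overline{\supp F+\supp G}$, which gives no upper bound on $\supp F$ from the existence of $F^\vee$. The Fourier--Sato idea is also delicate, since $\FT$ is defined on conical sheaves and the objects of $\ltr$ are not conical. The paper never proves (and never needs) this converse; it proves part (1) by an entirely different and more concrete argument: reduce to $X$ smooth projective via toric resolution (Example \ref{ex:blowup}) and Chow's lemma, use Seidel's result that every equivariant vector bundle then has a finite resolution by line bundles, write $\cL\cong\cL_1\otimes\cL_2^{-1}$ with $\cL_1,\cL_2$ ample, and conclude from Theorem \ref{thm:linbun} (which identifies $\kappa$ of an ample line bundle with the costandard sheaf on the interior of its moment polytope), Theorem \ref{thm:dual}, and the fact that convolution preserves compact support. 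To salvage your approach you would either have to prove the converse dualizability criterion for constructible sheaves on $M_\bR$ --- a genuinely nontrivial statement you have only named --- or fall back on the paper's explicit computation for line bundles.
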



\begin{proof}
We prove (1) here, and postpone the proof of (2) to Section \ref{sec:dualizable}.

By passing to a resolution of singularities, it suffices (after Example \ref{ex:blowup}) to assume that $X$ is smooth.  Similarly by Chow's lemma for toric varieties, we may furthermore assume that $X$ is projective.  An argument of Seidel's (see \cite[Proposition 1.3]{A2}) shows that every equivariant vector bundle on a smooth projective toric variety has a bounded resolution by line bundles, so it suffices to show that $\kappa(\cL)$ has compact support when $\cL$ is a line bundle.   It is not difficult to prove this directly, but we will deduce it by applying Theorem \ref{thm:linbun}.  Since $X$ is smooth and projective and $\kappa$ is monoidal,
$$\begin{array}{c}
\cL \cong \cL_1 \otimes \cL_2^{-1}, \\
\kappa(\cL) \cong \kappa(\cL_1) \star \kappa(\cL_2^{-1}),
\end{array}
$$
where $\cL_1$ and $\cL_2$ are ample.  That $\kappa(\cL_1)$ has compact support follows from Theorem \ref{thm:linbun} and that $\kappa(\cL_2^{-1}) = -\cD(\kappa(\cL_2))$ has compact support follows from Theorem \ref{thm:linbun} and from Theorem \ref{thm:dual} below.
\end{proof}

The rest of the proof of Theorem \ref{thm:perfect} relies on the following characterization of perfect complexes.

\begin{proposition}
\label{prop:dualizable}
Let $\cE \in \cQ_T(X)$.  The following are equivalent:
\begin{enumerate}
\item $\cE$ is perfect---i.e. $\cE$ is locally quasi-isomorphic to a bounded complex of $T$-equivariant vector bundles.
\item $\cE$ is strongly dualizable---i.e. there exists an object $\cF \in \cQ_T(X)$, together with maps $e:\cE \otimes \cF \to \cO$ and $c:\cO \to \cF \otimes \cE$, such that the composite map
$$\cE \to \cE \otimes \cF \otimes \cE \to \cE$$
is the identity.
\end{enumerate}
\end{proposition}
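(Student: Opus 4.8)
\emph{Overview.} The plan is to prove $(1)\Leftrightarrow(2)$ by reformulating strong dualizability in the standard way and then checking the resulting condition locally on the finite cover of $X$ by the $T$-stable affine charts $X_\sigma$, $\sigma\in\Sigma$, where the question reduces to a classical statement about modules over a commutative ring. First I would fix notation: for $\cE\in\cQ_T(X)$ write $\cE^\vee:=\mathbf{R}\uhom_{\cO_X}(\cE,\cO_X)$ with its natural equivariant structure, and let $\mathrm{ev}_\cE\colon\cE^\vee\otimes(-)\to\mathbf{R}\uhom_{\cO_X}(\cE,-)$ be the canonical natural transformation of endofunctors of $\cQ_T(X)$, where $\otimes$ is the derived tensor product, which together with $\cO_X$ makes $\cQ_T(X)$ a closed symmetric monoidal dg category. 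A purely formal argument in closed symmetric monoidal categories shows that $\cE$ is strongly dualizable precisely when $\mathrm{ev}_\cE$ is an isomorphism of functors, in which case $\cE^\vee$ is the dual and the evaluation and coevaluation maps of (2) are the canonical ones. So it suffices to prove that $\cE$ is perfect if and only if $\mathrm{ev}_\cE$ is an isomorphism.

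\emph{Perfect implies dualizable.} Suppose $\cE$ is perfect. Whether $\mathrm{ev}_\cE$ is an isomorphism can be tested after restriction to each $X_\sigma$, since $\cE^\vee\otimes(-)$, $\mathbf{R}\uhom_{\cO_X}(\cE,-)$ and the transformation $\mathrm{ev}_\cE$ all commute with restriction to open subsets, and ``being a quasi-isomorphism'' is local. So we may assume $X=X_\sigma$ is affine, where perfect means strictly perfect: $\cE$ is quasi-isomorphic to a bounded complex of $T$-equivariant vector bundles. A dévissage on the length of this complex, using the exact triangles that split off one term at a time, reduces to the case where $\cE$ is a single $T$-equivariant vector bundle $\cV$; on an affine toric chart $\cV$ is a sum of line bundles $\cO_\sigma(\chi_\alpha)$ (as recalled in the proof of Corollary \ref{cor:perfect}), and then $\mathrm{ev}_\cV$ is the evident isomorphism $\cV^\vee\otimes N\cong\uhom(\cV,N)$. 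Hence $\mathrm{ev}_\cE$ is an isomorphism and $\cE$ is strongly dualizable.

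\emph{Dualizable implies perfect.} Suppose $\cE$ is strongly dualizable. Perfectness is local on $X$, so it is enough to show $\cE|_{X_\sigma}$ is perfect, hence strictly perfect, for each $\sigma$. The restriction functor $u^*\colon\cQ_T(X)\to\cQ_T(X_\sigma)$ is symmetric monoidal, so it carries the strongly dualizable $\cE$ to a strongly dualizable object of $\cQ_T(X_\sigma)$, which is (a dg enhancement of) the derived category of $M$-graded $A$-modules with $A=\sfR[\sigma^\vee\cap M]$. It then remains to invoke the classical fact that a strongly dualizable object of the derived category of (graded) modules over a commutative ring is a perfect complex. I would argue this by representing the coevaluation $c\colon\cO_{X_\sigma}\to\cE|_{X_\sigma}^\vee\otimes\cE|_{X_\sigma}$ — a degree-zero cohomology class of a derived tensor product computed from a termwise-free resolution — as a cocycle supported in finitely many free summands in finitely many degrees; then $c$, and so by the zigzag identity of (2) the identity map of $\cE|_{X_\sigma}$, factors through a strictly perfect complex, exhibiting $\cE|_{X_\sigma}$ as a retract of a strictly perfect complex, and retracts of strictly perfect complexes are strictly perfect. (Equivalently, one notes that dualizability forces $\mathbf{R}\Hom(\cE|_{X_\sigma},-)$ to commute with arbitrary coproducts, so $\cE|_{X_\sigma}$ is a compact object of the unbounded equivariant derived category, hence perfect.) As this holds on each chart of a $T$-stable affine cover, $\cE$ is perfect.

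\emph{The main obstacle.} The two formal reductions and the dévissage of the first implication are routine. The real content is the last step — that a dualizable complex of (graded) modules over a commutative ring is perfect — and the delicate point there is to run the ``coevaluation factors through a finite stage'' argument cleanly, or, in the compactness formulation, to be careful that it makes sense given that $\cQ_T$ was defined via \emph{bounded} complexes (one passes through the unbounded equivariant derived category, where both dualizability and perfectness are inherited by the bounded objects, at the cost of a little bookkeeping). A secondary care point is that the coevaluation and the zigzag identity be transported correctly along the monoidal restriction functors and the dg-versus-triangulated comparisons, which is exactly where one uses that $u^*$ is monoidal and preserves the canonical duality data.
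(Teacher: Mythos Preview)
The paper does not actually prove this proposition: immediately after the statement it simply says ``See \cite{BFN} for a discussion of the history of this result, and a proof in a modern context.'' So there is no paper proof to compare against beyond a citation.

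Your argument is correct and is essentially the standard one. The reformulation of strong dualizability via the evaluation transformation, the localization to $T$-stable affine charts, and the d\'evissage for $(1)\Rightarrow(2)$ are all routine. For $(2)\Rightarrow(1)$ your two sketches---factoring the coevaluation through a finite free stage, or invoking dualizable $\Rightarrow$ compact $\Rightarrow$ perfect---are both valid and are exactly the kinds of arguments found in the cited literature. Your flagged ``main obstacle'' is the right one: the compactness argument strictly speaking lives in the unbounded derived category, so one should either pass there (noting that a dualizable object of $\cQ_T(X)$ remains dualizable in the larger category, and that perfect complexes are the compact objects there) or stick with the explicit finite-support-of-coevaluation argument, which stays inside bounded complexes. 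Either route closes cleanly; just be explicit about which you are running when you write it up.
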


See \cite{BFN} for a discussion of the history of this result, and a proof in a modern context.

\subsection{Coherent-constructible dictionary---duality}
\label{sec:dualizable}

The category $\Perf_T(X)$ has a natural contravariant involution: the internal hom $\cE \mapsto \uhom(\cE,\cO)$.  (Note that when $X$ is not smooth, or at least not Gorenstein, this is different from the Grothendieck duality of \cite{Ha}---the structure sheaf is not a dualizing sheaf in that case).  Write $\cE \mapsto \cE^\vee$ for this duality.  (The object $\cE^\vee$ is canonically isomorphic to any object $\cF$ in Proposition \ref{prop:dualizable}(2)).  Recall that $\cD:\Sh_c(M_\bR)^o \to \Sh_c(M_\bR)$ denotes the Verdier duality functor.  This functor changes singular support by the map $M_\bR \times N_\bR \to M_\bR \times N_\bR: (x,y) \mapsto (x,-y)$.  It follows that $\cD$ composed with the map $-1:M_\bR \to M_\bR$ preserves the subcategory $\Sh_c(M_\bR;\LS)$.

\begin{theorem}
\label{thm:dual}
Suppose $X$ is a complete toric variety, and let $\cE \in \Perf_T(X)$.  There is a natural quasi-isomorphism
$$\kappa(\cE^\vee) \cong -\cD(\kappa(\cE)).$$
\end{theorem}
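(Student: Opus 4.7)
My plan would be to exploit the monoidality of $\kappa$ established in Corollary \ref{cor:tensor}, together with the characterization of perfect complexes as strongly dualizable objects (Proposition \ref{prop:dualizable}). The idea is that duals in any symmetric monoidal category are unique up to canonical isomorphism, so the theorem reduces to producing $-\cD(\kappa(\cE))$ as a convolution-dual of $\kappa(\cE)$ with the correct evaluation and coevaluation maps.

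More precisely: since $\cE \in \Perf_T(X)$ is strongly dualizable with dual $\cE^\vee$, and the induced functor $h\ltrp \stackrel{\sim}{\to} h\ltr$ is a monoidal equivalence sending $\otimes$ to $\star$, the image $\kappa(\cE)$ is strongly dualizable in $(h\ltr,\star)$ with dual $\kappa(\cE^\vee)$. To identify this dual, I first need to pin down the convolution unit: by monoidality, it must be $\kappa(\cO_X)$, and applying the \v{C}ech computation of Corollary \ref{cor:perfect} to the trivial twisted polytope together with completeness of $\Sigma$ should identify this with the skyscraper sheaf $\sfR_{\{0\}}$ on $M_\bR$. Then the evaluation map $\kappa(\cE)\star -\cD(\kappa(\cE))\to \sfR_{\{0\}}$ is constructed as follows: if $v: M_\bR\times M_\bR\to M_\bR$ denotes addition, then by $(v_!,v^!)$-adjunction a map out of $v_!(F\boxtimes (-1)^*\cD F)$ to $\sfR_{\{0\}}$ is the same as a map $F\boxtimes (-1)^*\cD F \to v^!\sfR_{\{0\}}$, and $v^!\sfR_{\{0\}}$ is supported on the antidiagonal $\{(x,-x)\}\cong M_\bR$, where it becomes $\omega_{M_\bR}$; along the antidiagonal, $F\boxtimes (-1)^*\cD F$ restricts to $F\otimes \cD F$, which maps canonically to $\omega_{M_\bR}$ via the Verdier pairing. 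The coevaluation is constructed by a symmetric argument.

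The main obstacle is verifying the triangle identities for these evaluation and coevaluation maps, i.e.\ that $(\mathrm{ev}\star 1)\circ(1\star \mathrm{coev})=\mathrm{id}_F$ and its mate. This is essentially repackaging Poincar\'e/Verdier duality into convolution form, but requires careful bookkeeping of orientations and the degree shifts introduced by $v_!$. The cleanest strategy is probably to reduce to the case of standard/costandard $\Theta$-sheaves on convex polyhedral cones, where both sides become explicit sheaves on convex sets and the triangle identities can be checked directly at the level of stalks, much as in the proof of Theorem \ref{thm:linbun}.

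As a back-up (and sanity check), one can mimic the strategy used for part (1) of Theorem \ref{thm:perfect}: reduce to smooth projective $X$ via Example \ref{ex:blowup} and Chow's lemma, reduce to line bundles via Seidel's resolution, and write each line bundle as $\cL_1\otimes \cL_2^{-1}$ with $\cL_i$ ample. Monoidality of $\kappa$ and compatibility of $-\cD$ with $\star$ (a consequence of proper base change applied to $v$) then reduce the problem to the ample case, where $\kappa(\cL_i)=j_!\omega_{\bfP_i^\circ}$ is explicitly given by Theorem \ref{thm:linbun}; the assertion becomes that the \v{C}ech complex of $\cL_i^{-1}$ computes $j_*\sfR_{(-\bfP_i)^\circ}$, which can be verified by repeating the stalk-wise contractibility argument of Theorem \ref{thm:linbun} with ``visible faces from infinity'' replaced by their antipodal counterparts.
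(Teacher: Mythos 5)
Your main line of attack is essentially the paper's proof: monoidality of $\kappa$ (Corollary \ref{cor:tensor}) reduces the statement to identifying the convolution dual of $\kappa(\cE)$ with unit $\delta = \kappa(\cO)$ the skyscraper at the origin, and the key computation is exactly the adjunction $\dghom(F_1 \star F_2,\delta) \cong \dghom(F_1 \boxtimes F_2, v^!\delta)$ (for $v$ the addition map) together with $v^!\delta \cong \omega$ on the antidiagonal and the Verdier pairing. The one difference is that the paper sidesteps what you call the ``main obstacle'': since $\kappa$ is a monoidal equivalence, $\kappa(\cE^\vee)$ is automatically the strong dual of $\kappa(\cE)$, so it suffices to show that $-\cD(F_2)$ corepresents the same functor $F_1 \mapsto \dghom(F_1 \star F_2,\delta)$ that the dual does --- no evaluation/coevaluation maps for $-\cD(\kappa(\cE))$ and no triangle identities need to be checked.
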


\begin{proof}
As indicated in proposition \ref{prop:dualizable}, the functor $\cE \mapsto \cE^\vee$ is characterized by the tensor product.  More precisely, it is the unique functor for which there exists a natural quasi-isomorphism
$$
\dghom(\cE_1 \otimes \cE_2,\cO) \cong \dghom(\cE_1,\cE_2^\vee).
$$
So by corollary \ref{cor:tensor}, to prove the theorem it is enough to show that we have a similar natural isomorphism
$$
\dghom(F_1 \star F_2,\delta) \cong \dghom(F_1,-\cD(F_2))
$$
where $\delta = \kappa(\cO)$ denotes the skyscraper sheaf at $0 \in M_\bR$ (placed in cohomological degree 0).  Let $v$ denote the addition map $v:M_\bR \times M_\bR \to M_\bR$.  By adjunction we have
$$
\dghom(F_1 \star F_2,\delta) = \dghom(v_!(F_1 \boxtimes F_2),\delta) \cong \dghom(F_1 \boxtimes F_2,v^!\delta).
$$
Let $Z = \{(x,y) \in M_\bR \times M_\bR \mid x+ y = 0\}$, and let $i$ denote the inclusion map $i:Z \hookrightarrow M_\bR \times M_\bR$.  One computes $v^! \delta \cong i_* \ori_Z[\dim(Z)] = i_* \omega_Z$.  Under the map
$$M_\bR \times M_\bR \to M_\bR \times M_\bR:(x,y) \mapsto (x,-y)$$
the set $Z$ is carried to the diagonal, so that we have
$$\begin{array}{cl}
\dghom(F_1 \star F_2,\delta) & \cong \dghom(F_1 \boxtimes (-F_2),\Delta_* \omega_{M_\bR})\\
& \cong \dghom(F_1 \otimes (-F_2),\omega_{M_\bR}) \\
& \cong \dghom(F_1,\cD(-F_2)).
\end{array}
$$
This completes the proof.
\end{proof}

The functor $-\cD:\Sh_c(M_\bR)^o \to \Sh_c(M_\bR)$ is well-defined, but not every object in $\Sh_c(M_\bR)$ is strongly dualizable with respect to the convolution product.  The problem is that the map
$$F \to F \star (-\cD F) \star F \to F$$
can fail to be the identity map.  (This is the case even when $F$ is the constant sheaf on $M_\bR$.)  However we have the following:

\begin{lemma}
Let $F \in \Sh_c(M_\bR)$.  Suppose that $F$ is polyhedral and has compact support.  Then $F$ is strongly dualizable with respect to the convolution product.
\end{lemma}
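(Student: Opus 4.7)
The plan is to show $F$ is strongly dualizable with candidate dual $F^\vee := -\cD F$, by constructing unit and counit via adjunction and verifying the zigzag identities on a generating family.

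First I would establish that $-\cD$ is a contravariant strong monoidal involution on polyhedral sheaves with compact support. That $\cD \circ \cD \cong \mathrm{id}$ is standard for constructible sheaves, and $-\cD \delta \cong \delta$ is immediate. A short manipulation using $\cD \circ v_! = v_* \circ \cD$, the identity $v_* = v_!$ on sheaves with compact horizontal support, and $\sigma^* \circ v_! = v_! \circ (\sigma \times \sigma)^*$ (where $\sigma(x)=-x$) yields a natural isomorphism $-\cD(F \star G) \cong (-\cD F) \star (-\cD G)$. The adjunction $\dghom(A \star B, \delta) \cong \dghom(A, -\cD B)$ already proved in Theorem~\ref{thm:dual} then sends $1_F \in \dghom(F, F)$ to a counit $e: F \star F^\vee \to \delta$; applying $-\cD$ to $e$ produces a unit $c: \delta \to F^\vee \star F$.

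To verify the zigzag identities I would reduce to generators. Since $\star$ is triangulated in each variable, strongly dualizable objects form a thick subcategory. By Section~\ref{sec:microlocal}, the category of compactly supported polyhedral sheaves is generated by the standard sheaves $\sfR_P$ for $P$ a bounded polyhedron; reducing first by translation (an invertible monoidal autoequivalence) and then by monoidal pushforward from the affine subspace containing $P$, I may assume $P$ has full dimension with $0 \in P^\circ$. Then Verdier duality gives $F^\vee \cong j_! \sfR_{(-P)^\circ}[\dim(M_\bR)]$ with $j:(-P)^\circ \hookrightarrow M_\bR$, and proper base change yields
$$
(F \star F^\vee)_x \cong H^*_c\bigl(P \cap (x + P^\circ)\bigr)[\dim(M_\bR)].
$$
For $x = 0$ this is $H^*_c(P^\circ)[\dim(M_\bR)] \cong \sfR$ in degree zero; for $x \neq 0$ the half-open set $Q = P \cap (x + P^\circ)$ will have vanishing compactly supported cohomology. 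Assuming this, $F \star F^\vee \cong \delta$ with $e$ a quasi-isomorphism, and monoidality of $-\cD$ makes $c$ also a quasi-isomorphism. Since $\dghom(\sfR_P, \sfR_P) \cong \sfR$, the zigzag composite $F \to F$ is a scalar multiple of $1_F$, and rescaling $e$ or $c$ forces the scalar to be $1$.

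The main obstacle is the stalk vanishing at $x \neq 0$. The long exact sequence for compactly supported cohomology of the pair $(\overline{Q},\, \overline{Q} \setminus Q) = (P \cap (x+P),\, P \cap (x+\partial P))$ reduces the vanishing to contractibility of both pieces. Convexity of $P$ and $x+P$ makes $\overline{Q}$ contractible. For $\overline{Q} \setminus Q$---the part of $\partial(x+P)$ lying inside $P$---one argues via a radial retraction in the direction $-x$ that this set is homeomorphic to a closed topological disk, hence contractible; this convex-geometric fact is the essential input.
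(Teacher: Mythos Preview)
Your proof is correct and follows essentially the same route as the paper: reduce to standard sheaves on convex polytopes (the paper uses simplices), compute the stalks of $F \star (-\cD F)$ as the relative cohomology $H^*\bigl(P \cap (x+P),\, P \cap (x+\partial P)\bigr)$, and conclude that $F \star (-\cD F) \cong \delta$, whence $F$ is invertible and therefore dualizable. You in fact supply more detail than the paper on the key vanishing at $x \neq 0$---your projection-along-$x$ argument exhibits $P \cap (x+\partial P)$ as homeomorphic to a convex set, which the paper simply asserts; your more elaborate construction of $e$ and $c$ via adjunction and $-\cD$ is sound but not strictly needed once invertibility is known.
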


\begin{proof}
We may reduce to the case where $F$ is a standard sheaf on a simplex.  In that case $-\cD F$ is a costandard sheaf on the antipodal simplex.  We claim that $F \star -\cD F$ is quasi-isomorphic to $\delta$, the skyscraper sheaf at 0.  Then we can take the maps $c$ and $e$ of Proposition \ref{prop:dualizable}(2) to be inverse quasi-isomorphisms.  We have the following formula for the stalk of $F \star -\cD F$ at a point $x \in M_\bR$:
$$
(F \star -\cD F)_x = \Gamma(\{(y,z) \in M_\bR \times M_\bR \mid y + z = x\};F \boxtimes -\cD F).
$$
If $F$ is the standard sheaf on a simplex $\Delta \subset M_\bR$, then the cohomology groups of the right-hand side are isomorphic to the relative cohomology groups
$$
H^{i+n}(\Delta \cap (x + \Delta),\Delta \cap (x + \partial\Delta)).
$$
This group vanishes unless $x = 0$, in which case it is the reduced cohomology of the $n$-sphere $\Delta/\partial\Delta$.  This completes the proof.
\end{proof}

\begin{proof}[Proof of Theorem \ref{thm:perfect}]
We have shown that $\kappa:\Perf_T(X_\Si) \to \Sh_{cc}(M_\bR;\LS)$ is fully faithful.  It remains to show that it is essentially surjective.  Suppose $F$ is a sheaf on $M_\bR$ with singular support in $\LS$.  Suppose furthermore that $F$ has compact support.  Then $F$ is a shard sheaf, so by Theorem \ref{thm:thetagen} and Theorem \ref{thm:mainfirst} there is a quasicoherent sheaf $\cG \in \ltrp$ with $\kappa(\cG) \cong F$.  We need to show that $\cG$ is perfect, or equivalently by Proposition \ref{prop:dualizable} that $\cG$ is strongly dualizable.  By the lemma, $\kappa(\cG) = F$ is strongly dualizable, and moreover the dual $-\cD F$ is a shard sheaf.  It follows that there is an $\cH$ such that $\kappa(\cH) = -\cD F$ and therefore $\cG$ is strongly dualizable.  This completes the proof.
\end{proof}

\end{document}